\documentclass[11pt, a4paper]{amsart}

\usepackage{amscd,verbatim}
\usepackage{amssymb}

\usepackage{bm}

\usepackage{amssymb, amsmath}
\usepackage[all]{xy}
\usepackage[inline]{enumitem}
\usepackage{hyphenat}
\usepackage[colorlinks,linkcolor=blue,citecolor=blue,urlcolor=red]{hyperref}
\usepackage[dvipsnames]{xcolor}
\usepackage{mathtools}
\usepackage{tikz-cd}
\usetikzlibrary{cd,decorations.pathreplacing}
\usepackage[labelformat=empty]{caption}
\usepackage{xfrac}
 \usepackage[ left=3cm, right=3cm, top = 2.8cm, bottom = 2.8cm ]{geometry}
\usepackage{stmaryrd}
\usepackage{bbm}
\usepackage{enumitem}
\makeatletter
\def\@tocline#1#2#3#4#5#6#7{\relax
  \ifnum #1>\c@tocdepth % then omit
  \else
    \par \addpenalty\@secpenalty\addvspace{#2}%
    \begingroup \hyphenpenalty\@M
    \@ifempty{#4}{%
      \@tempdima\csname r@tocindent\number#1\endcsname\relax
    }{%
      \@tempdima#4\relax
    }%
    \parindent\z@ \leftskip#3\relax \advance\leftskip\@tempdima\relax
    \rightskip\@pnumwidth plus4em \parfillskip-\@pnumwidth
    #5\leavevmode\hskip-\@tempdima
      \ifcase #1
      \or\or \hskip 2em \or \hskip 2em \else \hskip 3em \fi%
      #6\nobreak\relax
    \dotfill\hbox to\@pnumwidth{\@tocpagenum{#7}}\par
    \nobreak
    \endgroup
  \fi}
\makeatother

\newcommand{\A}{\mathbf{A}}

\newcommand{\C}{\mathbf{C}}

\newcommand{\G}{\mathbf{G}}

\newcommand{\N}{\mathbb{N}}
\renewcommand{\P}{\mathbf{P}}
\newcommand{\Q}{\mathbb{Q}}
\newcommand{\R}{\mathbb{R}}
\newcommand{\Z}{\mathbb{Z}}
\newcommand{\F}{\mathbb{F}}

\newcommand{\sD}{\mathcal{D}}

\newcommand{\sM}{{\mathscr M}}
\newcommand{\sO}{\mathcal{O}}
\newcommand{\sP}{\mathcal{P}}

\newcommand{\bZ}{\mathbb{Z}}
\newcommand{\fm}{\mathfrak{m}}

\newcommand{\fp}{\mathfrak{p}}

\newcommand{\fX}{\mathfrak{X}}
\newcommand{\fY}{\mathfrak{Y}}

\newcommand{\Xb}{{\overline{X}}}

\newcommand{\Cone}{\operatorname{Cone}}

\newcommand{\Ker}{\operatorname{Ker}}

\newcommand{\Spec}{\operatorname{Spec}}
\newcommand{\Spa}{\operatorname{Spa}}
\newcommand{\Spf}{\operatorname{Spf}}
\newcommand{\Proj}{\operatorname{Proj}}

\newcommand{\Sch}{\operatorname{\mathbf{Sch}}}
\newcommand{\Shv}{\operatorname{\mathbf{Shv}}}
\newcommand{\Ab}{\operatorname{\mathbf{Ab}}}

\newcommand{\pro}[1]{\text{\rm pro}_{#1}\text{\rm--}}

\newcommand{\red}{{\operatorname{red}}}

\newcommand{\Zar}{{\operatorname{Zar}}}
\newcommand{\Nis}{{\operatorname{Nis}}}
\newcommand{\et}{{\operatorname{\acute{e}t}}}

\newcommand{\inj}{\hookrightarrow}

\newcommand{\id}{{\operatorname{Id}}}

\newcommand{\ch}{{\operatorname{ch}}}
\newcommand{\Sym}{{\operatorname{Sym}}}

\newcommand{\pr}{{\operatorname{pr}}}
\newcommand{\Frac}{{\operatorname{Frac}}}

\renewcommand{\lim}{\operatornamewithlimits{\varprojlim}}
\newcommand{\colim}{\operatornamewithlimits{\varinjlim}}

\newcommand{\ol}{\overline}

\renewcommand{\phi}{\varphi}
\renewcommand{\epsilon}{\varepsilon}

\newcommand{\CI}{\operatorname{\mathbf{CI}}}

\newcommand{\Bl}{{\mathbf{Bl}}}

\newcommand{\M}{\mathbf{M}}

\def\rmapo#1{\overset{#1}{\longrightarrow}}

\def\bZ{\mathbb{Z}}

\def\tV{\widetilde{V}}
\def\tW{\widetilde{W}}

\newcounter{spec}
{\end{list}}%

\newtheorem{lemma}{Lemma}[section]
\newtheorem{thm}[lemma]{Theorem}

\newtheorem{theorem}{Theorem}
\newtheorem{cor-intro}{Corollary}
\newtheorem{lem-intro}{Lemma}
\newtheorem{prop}[lemma]{Proposition}

\newtheorem{corollary}[lemma]{Corollary}

\newtheorem{claim}[lemma]{Claim}
\theoremstyle{definition}
\newtheorem{defn}[lemma]{Definition}

\newtheorem{definition}[lemma]{Definition}
\newtheorem{para}[lemma]{}
\theoremstyle{remark}

\newtheorem{rmk}[lemma]{Remark}

\newtheorem{ex}[lemma]{Example}

\numberwithin{equation}{section}

\numberwithin{equation}{lemma}

\colorlet{LightRubineRed}{RubineRed!70!}

\usepackage[color = blue!20, bordercolor = black, textsize = tiny]{todonotes}

\usepackage{relsize} 
\usepackage[bbgreekl]{mathbbol} 
\usepackage{amsfonts} 
\DeclareSymbolFontAlphabet{\mathbb}{AMSb} %to ensure that the meaning of \mathbb does not change
\DeclareSymbolFontAlphabet{\mathbbl}{bbold} 

\DeclareSymbolFontAlphabet{\mathbbl}{bbold}

\usepackage{enumitem}
\def\Ab{\mathbf{Ab}}
\def\lSm{\mathbf{lSm}}
\def\SmlSm{\mathbf{SmlSm}}
\def\Sm{\mathbf{Sm}}
\def\sM{\mathcal{M}}
\def\RSC{\mathbf{RSC}}
\def\RSCet{\RSC_{\et}}
\def\septet{\mathrm{septet}}
\def\smur{\mathrm{smur}}
\def\LtY{\Lambda_{\tY}}
\makeatletter
\newcommand{\mylabel}[2]{#2\def\@currentlabel{#2}\label{#1}}
\makeatother

 \def\k{\kappa}
\def\tX{\tilde{X}}
\def\tx{\tilde{x}}
\def\ty{\tilde{y}}
\def\tU{\tilde{U}}
\def\tV{\tilde{V}}
\def\tW{\tilde{W}}
\def\tT{\tilde{T}}
\def\tA{\tilde{A}}
\def\tB{\tilde{B}}
\def\tC{\tilde{C}}
\def\tD{\tilde{D}}
\def\ta{\tilde{a}}
\def\tb{\tilde{b}}
\def\tc{\tilde{c}}
\def\tf{\tilde{f}}
\def\tphi{\tilde{\phi}}
\def\tg{\tilde{g}}
\def\th{\tilde{h}}
\def\tp{\tilde{p}}
\def\tpr{\tilde{pr}}
\def\tq{\tilde{q}}

\def\XSt{(X/S)_{t}}
\def\Xdt{X_{dt}}
\def\Ftsh{F_t^\sharp}
\def\Xkdt{(X/k)_{dt}}
\def\tY{\tilde{Y}}
\def\Xb{\ol{X}}
\def\tu{\tilde{u}}
\def\tV{\tilde{V}}
\def\cU{\mathcal{U}}
\def\cW{\mathcal{W}}
\def\cV{\mathcal{V}}
\def\cT{\mathcal{T}}
\def\cY{\mathcal{Y}}
\def\cQ{\mathcal{Q}}
\def\Spa{\mathrm{Spa}}
\def\W{\mathbb{W}}
\def\tP{\tilde{P}}
\def\Za{Z_\alpha}
\def\kv{\kappa(v)}
\def\catprojlim#1{\underset{#1}{``\varprojlim"}}
\def\catinjlim#1{\underset{#1}{``\varinjlim"}}
\def\projlim#1{\underset{#1}{\varprojlim}}
\def\injlim#1{\underset{#1}{\varinjlim}}

\def\qaq{\;\text{ and }\;}
\def\rmapou#1#2{\underset{#2}{\overset{#1}{\longrightarrow}}}
\def\rmapo#1{\overset{#1}{\longrightarrow}}
\def\lmapo#1{\overset{#1}{\longleftarrow}}

\def\rig{\mathrm{rig}}
\def\Fhrig{\hat{F}^{\rig}}
\def\fXrig{\mathfrak{X}^{\rig}}
\def\OK{\cO_K}
\def\ShvCoh{\Shv((X,\tX)_t)_{\sO^{int}-coh}}
\def\qfor{\text{ for }}
\def\SchSS{\Sch_{(S,\tS)}}
\def\SchSSt{\Sch_{(S,\tS),t}}
 \def\k{\kappa}
 \def\Sp{\mathrm{Sp}}
\def\supn#1{|{#1}|_{\sup}}
\def\htX{\widehat{\tX}}
\def\htY{\widehat{\tY}}
\def\bB{\mathbb{B}}
\def\tor{\mathrm{tor}}

\def\Xb{\overline{X}}

\def\FKC#1{\textbf{#1}}

\newcounter{elno}   

\begin{document}

\def\THH{\operatorname{THH}}
\def\TC{\operatorname{TC}}
\def\TCmin{\operatorname{TC}^-}
\def\TP{\operatorname{TP}}
\def\HH{\operatorname{HH}}
\def\HC{\operatorname{HC}}
\def\HCmin{\operatorname{HC}^-}
\def\HP{\operatorname{HP}}

\def\Fil{\operatorname{Fil}}
\def\Gr{\operatorname{Gr}}
\def\gr{\operatorname{gr}}

\def\QSyn{\operatorname{QSyn}}
\def\QRSPerfd{\operatorname{QRSPerfd}}
\def\lQSyn{\operatorname{lQSyn}}
\def\lQRSPerfd{\operatorname{lQRSPerfd}}

\def\syn{\mathrm{syn}}
\def\Fsyn{\mathrm{Fsyn}}
\def\Fet{\mathrm{F\acute{e}t}}

\def\LogRec{\operatorname{\mathbf{LogRec}}}
\def\Ch{\operatorname{\mathrm{Ch}}}

\def\ltr{\mathrm{ltr}}

\def\kX{\mathfrak{X}}
\def\kY{\mathfrak{Y}}

\def\otCIsp{\otimes_{\CI}^{sp}}
\def\otCINissp{\otimes_{\CI}^{\Nis,sp}}

\def\tL{\tilde{L}}
\def\tX{\tilde{X}}
\def\tY{\tilde{Y}}
\def\tT{\tilde{T}}
\def\tF{\widetilde{F}}
\def\tG{\widetilde{G}}
\def\tR{\widetilde{R}}
\def\tS{\widetilde{S}}

\def\Sh{\operatorname{\mathbf{Shv}}}
\def\PSh{\operatorname{\mathbf{PSh}}}
\def\Shltr{\operatorname{\mathbf{Shv}_{dNis}^{ltr}}}
\def\Shlog{\operatorname{\mathbf{Shv}_{dNis}^{log}}}
\def\Shvlog{\operatorname{\mathbf{Shv}^{log}}}
\def\Sm{\operatorname{\mathrm{Sm}}}
\def\SmlSm{\operatorname{\mathrm{SmlSm}}}
\def\lSm{\operatorname{\mathrm{lSm}}}
\def\FlSm{\operatorname{\mathrm{FlSm}}}
\def\FlQSm{\operatorname{\mathrm{FlQSm}}}
\def\FlQSyn{\operatorname{\mathrm{FlQSyn}}}
\def\lCor{\operatorname{\mathrm{lCor}}}
\def\SmlCor{\operatorname{\mathrm{SmlCor}}}
\def\PShltr{\operatorname{\mathbf{PSh}^{ltr}}}
\def\PShlog{\operatorname{\mathbf{PSh}^{log}}}
\def\logCI{\mathbf{logCI}} 

\def\Mod{\operatorname{Mod}}

\def\Log{\operatorname{\mathcal{L}\textit{og}}}
\def\Rsc{\operatorname{\mathcal{R}\textit{sc}}}
\def\Pro{\mathrm{Pro}\textrm{-}}
\def\pro{\mathrm{pro}\textrm{-}}
\def\dg{\mathrm{dg}}
\def\plim{\mathrm{``lim"}}
\def\ker{\mathrm{ker}}
\def\coker{\mathrm{coker}}
\def\PrL{\mathcal{P}\mathrm{r^L}}
\def\PrLo{\mathcal{P}\mathrm{r^{L,\otimes}}}
\def\Spt{\mathcal{S}\mathrm{pt}}
\def\PSpt{\mathrm{Pre}\mathcal{S}\mathrm{pt}}
\def\Fun{\mathrm{Fun}}
\def\Sym{\mathrm{Sym}}
\def\CAlg{\mathrm{CAlg}}
\def\Poly{\mathrm{Poly}}
\def\Cat{\mathrm{Cat}}

\def\Alb{\operatorname{Alb}}
\def\bAlb{\mathbf{Alb}}
\def\Gal{\operatorname{Gal}}

\def\hofib{\mathrm{hofib}}
\def\fib{\mathrm{fib}}
\def\triv{\mathrm{triv}}
\def\ABl{\mathcal{A}\textit{Bl}}
\def\divsm#1{{#1_\mathrm{div}^{\mathrm{Sm}}}}

\def\cA{\mathcal{A}}
\def\cB{\mathcal{B}}
\def\cC{\mathcal{C}}
\def\cD{\mathcal{D}}
\def\cE{\mathcal{E}}
\def\cF{\mathcal{F}}
\def\cG{\mathcal{G}}
\def\cH{\mathcal{H}}
\def\cI{\mathcal{I}}
\def\cJ{\mathcal{J}}
\def\cK{\mathcal{K}}
\def\cL{\mathcal{L}}
\def\cM{\mathcal{M}}
\def\cN{\mathcal{N}}
\def\cO{\mathcal{O}}
\def\cP{\mathcal{P}}
\def\cQ{\mathcal{Q}}
\def\cR{\mathcal{R}}
\def\cS{\mathcal{S}}
\def\cT{\mathcal{T}}
\def\cU{\mathcal{U}}
\def\cV{\mathcal{V}}
\def\cW{\mathcal{W}}
\def\cX{\mathcal{X}}
\def\cY{\mathcal{Y}}
\def\cZ{\mathcal{Z}}

\def\tcA{\widetilde{\mathcal{A}}}
\def\tcB{\widetilde{\mathcal{B}}}
\def\tcC{\widetilde{\mathcal{C}}}
\def\tcD{\widetilde{\mathcal{D}}}
\def\tcE{\widetilde{\mathcal{E}}}
\def\tcF{\widetilde{\mathcal{F}}}

\def\one{\mathbbm{1}}

\def\XP{X \backslash \sP}
\def\M0a{{}^t\cM_0^a}
\newcommand{\Ind}{{\operatorname{Ind}}}

\def\Xkbar{\overline{X}_{\overline{k}}}
\def\dx{{\rm d}x}

\newcommand{\dNis}{\operatorname{dNis}}
\newcommand\loget{{\operatorname{l\acute{e}t}}}
\newcommand\ket{{\operatorname{k\acute{e}t}}}
\newcommand\vet{{\operatorname{v\acute{e}t}}}
\newcommand\ABNis{{\operatorname{AB-Nis}}}
\newcommand\sNis{{\operatorname{sNis}}}
\newcommand\sZar{{\operatorname{sZar}}}
\newcommand\set{{\operatorname{s\acute{e}t}}}
\newcommand\cofib{\mathrm{Cofib}}

\newcommand{\Gmlog}{\G_m^{\log}}
\newcommand{\Gmlogred}{\overline{\G_m^{\log}}}

\newcommand{\varcolim}{\mathop{\mathrm{colim}}}
\newcommand{\varlim}{\mathop{\mathrm{lim}}}
\newcommand{\tensor}{\otimes}

\newcommand{\eq}[2]{\begin{equation}\label{#1}#2 \end{equation}}
\newcommand{\eqalign}[2]{\begin{equation}\label{#1}\begin{aligned}#2 \end{aligned}\end{equation}}

\def\varplim#1{\text{``}\varlim_{#1}\text{''}}
\def\det{\mathrm{d\acute{e}t}}

\renewcommand{\int}{\mathrm{int}}

\def\tor{\mathrm{tor}}
\def\Op{\cO^+}
\def\Omp#1{\Omega^{#1,+}}
\def\rig{\mathrm{rig}}
\def\an{\mathrm{an}}
\def\tXrig{\tX^{\rig}}
\def\Omprig#1{\Omega^{#1,\rig}}
\def\Gar{\G_a(r)}
\def\Gas{\G_a(s)}
\def\Gasr{\G_a(s,r)}
\def\Gardv{\G_a(r)^{dv}}
\def\Garrig{\G_a(r)^\rig}
\def\Gasrrig{\G_a(sr)^\rig} 
\def\Gaz{\G_a(0)}
\def\Gazdv{\G_a(0)^{dv}}
\def\hK{\hat{K}}
\def\hR{\hat{R}}
\def\hX{\hat{X}}
\def\htA{\hat{\tA}}
\def\hA{\hat{A}}
\def\htX{\hat{\tX}}
\def\htY{\hat{\tY}}
\def\htZ{\hat{\tZ}}
\def\hX{\hat{X}}
\def\hGar{\widehat{\Gar}}
\def\Sp{{\mathrm Sp}}
\newcommand{\prolim}[1]{\underset{#1}{\operatorname{``} {\rm lim} \operatorname{''}  \hspace{.2ex}  }}  
\def\tAT{\tA[T]}

\def\tATz{\tA[T]^z_\pi}
\def\ATz{A[T]^z_\pi}
\def\RTz{R[T]^z_\pi}
\def\KTz{K[T]^z_\pi}
\def\tAz{\tA^z_\pi}
\def\AtXz{(\A_{\tX}^1)^z_\pi}
\def\PtXz{(\P_{\tX}^1)^z_\pi}
\def\AXz{(\A_{X}^1)^z_\pi}
\def\tXz{\tX^z_\pi}
\def\Xz{X^z_\pi}
\def\Az{A^z_\pi}

\def\tATh{\tA[T]^h_\pi}
\def\ATh{A[T]^h_\pi}
\def\tAh{\tA^h_\pi}
\def\AtXh{(\A_{\tX}^1)^h_\pi}
\def\PtXh{(\P_{\tX}^1)^h_\pi}
\def\AXh{(\A_{X}^1)^h_\pi}
\def\tXh{\tX^h_\pi}
\def\Xh{X^h_\pi}
\def\Ah{A^h_\pi}

\def\tZ{\tilde{Z}}
\def\rig{\mathrm{rig}}
\def\orig{\mathrm{o,rig}}
\def\htAT{\htA\langle T\rangle}
\def\hAT{\hA\langle T\rangle}

\def\th{\tilde{h}}
\def\tAp{\tA_{\fp}}
\def\modpi{\;\mathrm{mod}^\times \:\pi}
\def\tth{\tilde{\theta}}
\def\tAm{\tA_{\fm}}
\def\kp{\kappa(\fp)}
\def\tg{\tilde{g}}
\def\tUb{\overline{U}}
\def\bu{\mathrm{Bl}}
\def\PShoc{\PSh^{\mathrm{oc}}}
\def\Shvoc{\Shv^{\mathrm{oc}}}
\def\Gaz{\G_a(0)}
\def\zpzcs{(\Z/p\Z)_0}
\def\Frob{\mathrm{Frob}}
\def\jZ{j_{\cZ}}
  \def\rhob{\overline{\rho}}
  \def\kfp{\kappa(\fp)}
  
 \def\kp{\kappa(\fp)}
  
\def\Or{\cO(r)}
\def\Ors{\cO(s,r)}
\def\Orss#1{\cO_{#1}(s,r)}
\def\ATc{A\langle T\rangle}

\def\Oc{\sO^{\circ}}
\def\Sp{\mathrm{Sp}}
 \def\sO{\cO}
 \def\bB{{\mathbb B}}
\def\Fa{F_a}

\def\Yrig{Y^{\rig}}
\def\Xrig{X^{\rig}}
\def\Zrig{Z^{\rig}}

\def\zar{\mathrm{zar}}
\def\rig{\mathrm{rig}}
\def\OtX{\cO_{\tX}}
\def\OtY{\cO_{\tY}}
\def\OtU{\cO_{\tU}}
\def\tor{\mathrm{tor}}
\def\Fzar#1{F_{{#1}_{\zar}}}
\def\Fet#1{F_{{#1}_{\et}}}
\def\StX{\Sigma_{\tX}}
\def\St(#1){\Sigma_{#1}}
\def\OStX{\cO_{\StX}}
\def\Rpi{R[1/\pi]}
\def\QCoh{\mathbf{QCoh}}
\def\Coh{\mathbf{Coh}}
\def\ShvQCoh#1{\Shv^{\mathrm{qcoh}}(#1_t)}
\def\ShvCoh#1{\Shv^{\mathrm{coh}}(#1_t)}
\def\ShvaCoh#1{\Shv^{\mathrm{qc,acoh}}(#1_t)}
\def\Shvfp#1{\Shv^{\mathrm{fp}}(#1_t)}
\def\fY{\mathfrak{Y}}
\def\SX{\Sigma_{\fX}}
\def\StX{\Sigma_{\tX}}
\def\StY{\Sigma_{\tY}}
\def\StXp{\Sigma'_{\tX}}
\def\LtX{\Lambda_{\tX}}
\def\LtY{\Lambda_{\tY}}
\def\Fhrig{\widehat{F}^{\rig}}
\def\Valsept#1{\mathrm{Val}^{\mathrm{sept}}_{#1}}

 \def\ZRX{\langle\fX\rangle}
 \def\RZ{\mathrm{RZ}}
\def\OcRZ#1{{\sO^{\circ}_{\RZ(#1)}}}
\def\OcRZX{{\sO^{\circ}_{\RZ(\fX)}}}
 \def\sp{\mathrm{sp}}
\def\fXrig{\fX^{\rig}}
\def\fYrig{\fY^{\rig}}
\def\Bl{\mathrm{Bl}}
\def\fU{\mathfrak{U}}
\def\OK{\cO_K}
\def\sOt{\sO^t}

\def\Vals{\mathrm{Val}^{\mathrm{s}}}

 \def\OKt#1#2{\Omega^{#1,t}_{#2/K}}
 \def\OKtt#1{\Omega^{#1,t}_{-/K}}
 \def\OFt#1#2{\Omega^{#1,t}_{#2/F}}
 \def\OFtt#1{\Omega^{#1,t}_{-/F}}
 
 \def\OKtrigg#1{\Omega^{#1,o}_{/K}}
 \def\OFtrigg#1{\Omega^{#1,o}_{/F}}
 \def\OKtrig#1#2{\Omega^{#1.o}_{#2/K}}
 \def\OFtrig#1#2{\Omega^{#1,o}_{#2/F}}
 \def\qfor{\text{ for }\;}
 \def\qwith{\;\text{ with }\;}
 \def\vZar{\mathrm{vZar}}
  \def\fib{\mathrm{fib}}
  
\def\SmSS{\Sm_{(S,\tS)}}
\def\SmSSt{\Sm_{(S,\tS),t}}
\def\SchSS{\Sch_{(S,\tS)}}
\def\SchSSt{\Sch_{(S,\tS),t}}

\def\supn#1{|{#1}|_{\sup}}

\def\Ab{\mathbf{Ab}}
\def\lSm{\mathbf{lSm}}
\def\SmlSm{\mathbf{SmlSm}}
\def\Sm{\mathbf{Sm}}
\def\sM{\mathcal{M}}
\def\RSC{\mathbf{RSC}}
\def\RSCet{\RSC_{\et}}
\def\septet{\mathrm{septet}}
\def\smur{\mathrm{smur}}
\def\LtY{\Lambda_{\tY}}
\makeatletter
%\newcommand{\mylabel}[2]{#2\def\@currentlabel{#2}\label{#1}}
%\makeatother

 \def\k{\kappa}
\def\tX{\tilde{X}}
\def\tx{\tilde{x}}
\def\ty{\tilde{y}}
\def\tU{\tilde{U}}
\def\tV{\tilde{V}}
\def\tW{\tilde{W}}
\def\tT{\tilde{T}}
\def\tA{\tilde{A}}
\def\tB{\tilde{B}}
\def\tC{\tilde{C}}
\def\tD{\tilde{D}}
\def\ta{\tilde{a}}
\def\tb{\tilde{b}}
\def\tc{\tilde{c}}
\def\tf{\tilde{f}}
\def\tphi{\tilde{\phi}}
\def\tg{\tilde{g}}
\def\th{\tilde{h}}
\def\tp{\tilde{p}}
\def\tpr{\tilde{pr}}
\def\tq{\tilde{q}}

\def\XSt{(X/S)_{t}}
\def\Xdt{X_{dt}}
\def\Ftsh{F_t^\sharp}
\def\Xkdt{(X/k)_{dt}}
\def\tY{\tilde{Y}}
\def\Xb{\ol{X}}
\def\tu{\tilde{u}}
\def\tV{\tilde{V}}
\def\cU{\mathcal{U}}
\def\cW{\mathcal{W}}
\def\cV{\mathcal{V}}
\def\cT{\mathcal{T}}
\def\cY{\mathcal{Y}}
\def\cQ{\mathcal{Q}}
\def\Spa{\mathrm{Spa}}
\def\W{\mathbb{W}}
\def\tP{\tilde{P}}
\def\Za{Z_\alpha}
\def\kv{\kappa(v)}
\def\catprojlim#1{\underset{#1}{``\varprojlim"}}
\def\catinjlim#1{\underset{#1}{``\varinjlim"}}
\def\projlim#1{\underset{#1}{\varprojlim}}
\def\injlim#1{\underset{#1}{\varinjlim}}
\def\qaq{\;\text{ and }\;}
\def\rmapo#1{\overset{#1}{\longrightarrow}}
\def\lmapo#1{\overset{#1}{\longleftarrow}}

\author{Alberto Merici}
\thanks{Funded by the Deutsche Forschungsgemeinschaft (DFG, German Research 
Foundation)
TRR 326 \textit{Geometry and Arithmetic of Uniformized Structures}, 
project number 444845124./}
\address{Institut f\"ur Mathematik, Universit\"at Heidelberg. INF 205, 69120 Heidelberg, Germany}
\email[A. Merici]{merici@mathi.uni-heidelberg.de}

\author{Kay R\"ulling}
\address{School of Mathematics and Natural Sciences, University of Wuppertal, Germany}
\email[K. R\"ulling]{ruelling@uni-wuppertal.de}

\author{Shuji Saito}
\address{Graduate School of Mathematical Sciences, the University of Tokyo, 3-8-1 Komaba Meguro-ku
Tokyo 153-8914, Japan}
\email[S. Saito]{sshuji.goo@gmail.com}

\title[Birational lattices in the cohomology of the structure sheaf]{Birational and 
\texorpdfstring{${\A^1}$}{A1}-invariant lattices in the cohomology of the structure sheaf  over non-archimedean fields}

\begin{abstract} 
We show that the cohomology of the structure sheaf of smooth and proper schemes over a complete non-archimedean field $K$ of characteristic zero, can be refined to an $\A^1$-invariant cohomology theory of smooth (not necessarily proper) schemes over $K$ with values in $\cO_K$-lattices, and the same holds for $K$ of positive characteristic in dimensions at most $3$. 
As one application, we obtain that the automorphism group of the function field of a proper smooth variety $X$ of dimension at most 3 
over a field of positive characteristic acts quasi-unipotently on the cohomology of the structure sheaf of $X$. 
The construction of the lattices relies on a variant of the tame cohomology of Hübner--Schmidt with coefficients in a twisted version 
of the  tame structure sheaf and uses results from rigid analytic geometry on the cohomology of 
twisted integral rigid structure sheaves due to Bartenwerfer and van der Put.
\end{abstract}
\maketitle
\tableofcontents

\section*{Introduction}

It is a classical question whether a given cohomology theory  $X\mapsto H(X)$ defined on some category of schemes 
with values in finite dimensional vector spaces over a 
complete non-archimedean field $K$  admits a functorial lattice over the ring of integers $\sO_K$.
A positive answer to this question directly implies that for any endomorphism of relevant schemes $f:X\to X$, 
the characteristic polynomial of the induced $K$-linear endomorphism  $f^*$ acting on $H(X)$ has coefficients in $\sO_K$.

For example, as we will see below how integral structures on the cohomology of the structure sheaf of smooth proper $K$-schemes
can be defined using a variant of the tame site of Hübner--Schmidt. It was pointed out to us by Bhatt that an integral structure can as well be defined analytically by first using GAGA to go to rigid analytic varieties and then applying a classical result of Bartenwerfer on the cohomology of the rigid integral structure sheaf on smooth affinoids. 

In case $K$ has characteristic zero, it is well-known that by Hironaka's work the cohomology of the structure sheaf of {smooth proper $K$-schemes} extends
to a functor on the category $\Sm_K$ of all smooth separated $K$-schemes by the formula
\[X\mapsto R\Gamma(\ol{X}, \cO),\]
where $\ol{X}$ is some fixed smooth compactification of $X$ over $K$. In case the characteristic of $K$ is positive,  
the same is true if one restricts to smooth separated $K$-schemes of dimension at most $3$, where one uses 
\cite{CossartPiltant-ResArith3folds} to get the existence of a smooth compactification and 
\cite{CR15} to get the independence of such a compactification.  Note that this gives an $\A^1$-invariant cohomology theory.
The main purpose of this paper is to show that the above cohomology theory on $\Sm_K$ admits an algebraically defined 
$\A^1$-invariant integral structure which satisfies tame descent.

More precisely, we prove the following.

\begin{theorem}\label{thm;main}
Let $K$ {be a complete non-archimedean field} with $R=\OK$ the ring of integers. 
Let $\Mod^f_R$ be  the full subcategory of $R$-modules that are of finite type up to bounded torsion (see Definition \ref{def;ModR}) and $\sD^{\geq0}(R)^f$ be the full subcategory of the derived category of bounded-below complexes  
of $R$-modules whose cohomology are in $\Mod_R^f$. 
Assume $K$ is either a discrete valuation field or algebraically closed.
 \begin{enumerate}[label=(\arabic*) ]
\item\label{thm;main2} 
If $\ch(K)=0$\footnote{But the characteristic of the residue field of $R$ may be positive.}, there is a functor 
\[ \cF: \Sm_K \to \sD^{\geq0}(R)^f,\quad X\mapsto \cF(X)\]
enjoying the following properties for $X\in \Sm_K$: 

 \begin{enumerate}[label=(\alph*)]
 \item\label{thm;main2a}
 There is a natural equivalence
 \[\cF(X)\otimes_R K\simeq R\Gamma(\ol{X},\cO)\]
 for every smooth compactification $\Xb$ of $X$ over $K$.
 For a map $\Xb'\to \Xb$ of such compactifcations, the above equivalence is   compatible in an obvious sense. 
   \item\label{thm;main2b}($\A^1$-invariance) 
  $\cF(X)\simeq \cF(X\times_K \A^1_K)$.
  \item\label{thm;main2c}(Birational invariance)
$\cF(X) \simeq \cF(Y)$ for any map $Y\to X$ in $\Sm_K$ which is an isomorphism over a dense open subset of $X$.   
  \item\label{thm;main2d}(Tame descent)
  For   $Y\to X$ an $R$-tame covering in the sense of \cite{HS2020}, we have an equivalence
 \[ \cF(X) \simeq \varprojlim
\Big(\cF(Y) \begin{smallmatrix}  \longrightarrow\\ \longleftarrow\\  \longrightarrow \end{smallmatrix}\cF(Y\times_X Y)
\begin{smallmatrix}  \longrightarrow\\\longleftarrow\\  \longrightarrow\\ \longleftarrow \\ \longrightarrow
  \end{smallmatrix}\cF(Y\times_X Y\times_X Y) \cdots\Big)\]
\end{enumerate}
\item If $\ch(K)>0$, there exists 
\[ (\Sm_K^{\leq 3})^{op}\to \cD^{\ge 0}(R)^f, \quad X\mapsto \cF(X),\]
enjoying the same properties as above except that we assume $\dim(X)\leq 2$ in \ref{thm;main2b}, where $\Sm_K^{\leq 3}$ is the full subcategory of $\Sm_K$ consisting of those $X$ with $\dim(X)\leq 3$. 
\end{enumerate}
 \end{theorem}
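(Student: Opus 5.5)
Following the abstract, I would first define $\cF$ on smooth \emph{proper} $K$-schemes and then extend it. For $X$ smooth proper over $K$, take the pair $(X,\tX)$ with $\tX$ a proper flat model over $R$, or a formal model over $\Spf R$ if $K$ is algebraically closed. I would set
\[ \cF(X) := R\Gamma\big((X,\tX)_t, \sO^{+,\mathrm{tw}}\big), \]
the cohomology on the tame site of Hübner--Schmidt of a twisted version of the tame integral structure sheaf $\sO^t$. The twist should be designed so that the resulting complex is independent of the model $\tX$: any two models are dominated by a third, and the tame site depends only on the valuations of $K(X)$ centred on $X$. This gives functoriality. Tame descent \ref{thm;main2d} then holds essentially by construction, since $\cF$ is defined as cohomology of a sheaf on the tame site.

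The key comparison is the claim
\[ \cF(X)\otimes_R K\simeq R\Gamma(X,\sO_X). \]
I would prove it by passing to the rigid analytic side via GAGA. Locally on affinoid pieces, the stalk and section computations of the twisted sheaf are controlled by the theorems of Bartenwerfer and van der Put on the cohomology of twisted integral rigid structure sheaves on smooth affinoids. These say that the higher cohomology of $\sO^{\circ}$ twisted by a suitable power of $\pi$ is killed by a bounded power of $\pi$. Hence the higher cohomology is bounded torsion, it vanishes after inverting $\pi$, and the finiteness condition $\Mod_R^f$ follows from coherence on the model together with this bounded torsion. Properness of $\tX$ gives finite generation up to bounded torsion.

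Next comes the extension to $\Sm_K$ and the invariance properties. For arbitrary $X\in\Sm_K$, choose a smooth compactification $\Xb$; this uses Hironaka in characteristic zero, and \cite{CossartPiltant-ResArith3folds} for dimension at most $3$ in positive characteristic. Set $\cF(X):=\cF(\Xb)$. Well-definedness and birational invariance \ref{thm;main2c} reduce to the statement that $\cF(\Xb')\to\cF(\Xb)$ is an equivalence for a proper birational map of smooth proper schemes.

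\begin{itemize}
\item In characteristic zero I would use weak factorization to reduce to blow-ups in smooth centers, and then compute with the projective bundle formula for the integral twisted sheaf.
\item In positive characteristic I would instead follow \cite{CR15}. Their method compares $R\Gamma$ through a correspondence argument: the graph closure acts on $\sO$-cohomology and $Rf_*\sO=\sO$. This needs to be carried out integrally.
\end{itemize}

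$\A^1$-invariance \ref{thm;main2b} then follows from birational invariance, because $\Xb\times\P^1$ compactifies $X\times\A^1$ and the projective bundle formula gives $\cF(\Xb\times\P^1)\simeq\cF(\Xb)$. In positive characteristic this requires $\dim X\le 2$, so that $\dim(X\times\A^1)\le 3$.

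The main obstacle is the integral birational invariance $\cF(\Xb')\simeq\cF(\Xb)$ \emph{on the nose}, and not merely up to bounded torsion. The twist is the essential point here. I would choose it so that the local contributions of the exceptional locus vanish exactly, using the precise form of the Bartenwerfer--van der Put vanishing on polydiscs and annuli. I would then check the claim on the local model of a blow-up via an explicit Čech computation on the tame site.
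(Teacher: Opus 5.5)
Your architecture (tame cohomology of a twisted $\cO^t$, comparison with rigid geometry, Bartenwerfer for finiteness, $\P^1_X\to X$ for $\A^1$-invariance) is in the spirit of the paper. There is, however, a genuine gap at exactly the step you flag as the main obstacle: exact integral invariance under blow-ups in smooth centres. You give no working mechanism for it, and the one you suggest does not suffice.

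A \v{C}ech computation on affinoid pieces of $X$, or of a local model of the blow-up, only controls things up to bounded torsion. On a general smooth affinoid, Bartenwerfer only tells you that $H^i(-,\cO(1))$ is killed by a fixed constant. Exact vanishing is known only in special cases such as generalized polydisks, and ``choosing the twist'' cannot change this.

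The paper's route has three steps:
\begin{enumerate}
\item[(i)] For proper $\tX$ it proves $R\Gamma((X,\tX)_t,\G_a(0))\simeq R\Gamma(\fXrig,\cO(1))$. The proof writes the left side as a colimit over admissible blow-ups, applies formal GAGA on each, and passes to the Riemann--Zariski space. Here $\G_a(0)=\sqrt{\pi\cO^t}$ and $\G_a(0)_{\fXrig}=\cO(1)=\{|f|_{\sup}<1\}$.
\item[(ii)] For $f$ a blow-up in a regular centre, or $\P^1_X\to X$, it splits $\cO(s)\to\cO\to\cO/\cO(s)$ and treats $\cO$ via $Rf_*\cO=\cO$ and GAGA.
\item[(iii)] Since $\cO/\cO(s)$ is overconvergent, the de Jong--van der Put base change theorem lets one compute $Rf^{\rig}_*(\cO/\cO(s))$ on fibres over all analytic points $a$. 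These fibres are $\P^n_{F_a}$, with $F_a$ the completed residue field, and the standard cover by polydisks gives exact vanishing there (Lemmas \ref{lem1;CohRigid} and \ref{lem:hdi-general}).
\end{enumerate}
This is the real reason for the twist: $\cO/\cO(1)$ is overconvergent, whereas $\cO/\cO^\circ$ is not known to be. Model-independence, which you attribute to the twist, holds for every tame sheaf, including $\cO^t$ itself, by invariance under quasi-modifications.

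Second, defining $\cF(X):=\cF(\Xb)$ for non-proper $X$ breaks your claim that tame descent holds by construction. For a tame covering $Y\to X$ the fibre products $Y\times_X\cdots\times_X Y$ are not proper. You would then need $\cF(\Xb)\simeq R\Gamma((X,\tX)_t,\G_a(0))$, which is precisely Theorem \ref{thm;comparison-final}\ref{thm;comparison-final2}.

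The paper instead defines $\cF(X)=R\Gamma((X,\tX)_t,\G_a(0))$ on all of $\Sm_K$, so tame descent is automatic. It then proves $\cF(U)\simeq\cF(X)$ for dense opens $U$ of smooth $X$, which gives birational invariance via $\cF(X)\simeq\cF(U)\simeq\cF(f^{-1}U)\simeq\cF(Y)$. The tame cohomology of $(U,\tX)$ is a colimit over blow-ups of $\tX$ centred outside $U$. Principalization (Hironaka, resp.\ Cossart--Piltant for $\dim\le 3$) makes blow-ups with regular generic centre cofinal, so only the blow-up lemma above is needed.

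This route needs no weak factorization. It also shows that your positive-characteristic plan of ``carrying out \cite{CR15} integrally'' is both unsubstantiated and unnecessary. The identity $Rf_*\cO=\cO$ for blow-ups in regular centres holds in every characteristic, so the same argument applies for $\dim\le 3$.
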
 
 
 Recall (see \cite{HS2020}) that a morphism of schemes $f:Y\to X$ is an $R$-tame covering if it is an \'etale covering and for any $x\in X$ and a valuation $v$ on $\kappa(x)$ trivial over $R$, there exists $y\in Y$ lying over $x$ and a valuation $w$ on $\kappa(y)$ extending $v$ such that $\cO_w/\cO_v$ is tame, i.e. $[\Frac(\cO_w^{sh}):\Frac(\cO_v^{sh})]$ is prime to the exponential characteristic of the residue field of $\cO_v$, where $(-)^{sh}$ denotes the strict henselization.
 \medbreak

 We remark that the only reason why we need to work with smooth schemes in Theorem \ref{thm;main} is to use Bartenwerfer's result on cohomology of smooth affinoids over $K$ (see \cite[Folgerung~3]{B2} and also the last paragraph of the introduction). 
If the latter result can be generalized to affinoids with certain prescribed singularities  (e.g. rational, log canonical, Du Bois), then we can relax the smoothness assumption to such singularities in the above theorem, see Theorem \ref{thm;main-sing} for a precise statement.

\medbreak

As an application  of Theorem \ref{thm;main}, we obtain the following.

\begin{theorem}\label{thm;application}
Let $k$ be an arbitrary field and let $\Lambda \subseteq k$ be the integral closure of $\bZ$.
Let $X$ be a smooth and proper $k$-scheme. Assume either $\ch(k)=0$ or $\dim(X)\leq 3$. For a $k$-rational map $\phi: X\dasharrow X$ and $i\in \N$, the characteristic polynomials ${\rm det}\big(T-\phi^*|H^i(X,\cO_X)\big)$ and ${\rm det}\big(T-\phi^*|H^i(X,\omega_X)\big)$ lie in $\Lambda[T]$.
\end{theorem}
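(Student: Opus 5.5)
The plan is to reduce to Theorem \ref{thm;main} by spreading out $X$ and $\phi$ over a finitely generated ring and then specializing to a complete discretely valued field.

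\textbf{Step 1: the action is well defined.} First I need to make sense of $\phi^*$ for a rational map. Let $U\subseteq X$ be a dense open on which $\phi$ is defined, and let $\Gamma\subseteq X\times X$ be the closure of the graph of $\phi|_U$.
\begin{itemize}
\item In characteristic zero I resolve $\Gamma$ to obtain a smooth proper $\tilde X$ with a birational morphism $p:\tilde X\to X$ and a morphism $q:\tilde X\to X$. Then $p^*:H^i(X,\cO)\to H^i(\tilde X,\cO)$ is an isomorphism, and I set $\phi^*=(p^*)^{-1}q^*$.
\item In dimension at most $3$ I do the same, using \cite{CossartPiltant-ResArith3folds} for the resolution and \cite{CR15} for independence of the choice.
\end{itemize}
Alternatively, and this is what the rest of the proof needs, $\phi^*$ is the action of the morphism $\phi|_U:U\to X$ through the functor $\cF$. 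This uses birational invariance \ref{thm;main2c}: $\cF(X)\simeq\cF(U)$ via restriction, and $\cF(X)\otimes K\simeq R\Gamma(X,\cO)$. For $\omega_X$ I use Serre duality: $H^i(X,\omega_X)$ is dual to $H^{d-i}(X,\cO_X)$. One could instead use the birational invariance of $H^i(X,\omega)$ directly. Either way it suffices to treat $\cO_X$, after checking that the action on the dual is compatible with the transpose of $\phi^*$, possibly via a trace or pushforward $\phi_*$.

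\textbf{Step 2: spread out and specialize.} The coefficients of the characteristic polynomial lie in $k$ and are algebraic over the prime field once we know integrality. Let $\Lambda_0$ be the prime ring. Spread $X$ and $\phi$ out over a finitely generated $\Lambda_0$-subalgebra $A\subseteq k$. Shrinking $\operatorname{Spec} A$, I may assume:
\begin{itemize}
\item $X_A$ is smooth and proper over $A$;
\item $\phi$ is a rational map defined on a dense open $U_A$;
\item the cohomology $H^i(X_A,\cO)$ is free and commutes with base change.
\end{itemize}
The coefficients $c_j$ of the characteristic polynomial then lie in $A$, and I must show each $c_j$ is integral over $\Lambda_0$.

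\textbf{Step 3: integrality via valuations.} An element $c$ of a finitely generated domain $A$ that is not integral over $\Lambda_0$ admits a valuation on $\Frac(A)$, nonnegative on $\Lambda_0$, with $v(c)<0$. I want to upgrade this to a discrete valuation. Choose a prime of $A[c^{-1}]$ containing $c^{-1}$; this is possible since $c^{-1}$ is not a unit there. Then take a DVR dominating it, or a DVR of the normalization of a curve in $\operatorname{Spec}$ through that point. Completing gives a complete discretely valued field $K$ with a map $A\to K$ sending $c$ to an element of negative valuation, and $\ch(K)=\ch(k)$.

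Now apply Theorem \ref{thm;main} to $X_K$ and the map $\phi|_{U_K}$. The functor $\cF$ gives a lattice $\cF(X_K)$ in $R\Gamma(X_K,\cO)$ stable under $\phi^*$, and $\phi^*$ acts through $\cF(U_K)\simeq\cF(X_K)$. The lattice is only of finite type up to bounded torsion, so I pass to $H^i$ modulo torsion, which is a finite free $R$-module since $R$ is a DVR. Hence the characteristic polynomial of $\phi^*$ on $H^i(X_K,\cO)=H^i(X,\cO)\otimes_A K$ lies in $R[T]$. This contradicts $v(c)<0$, so each $c_j$ is integral over $\Lambda_0$ and lies in $\Lambda$.

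\textbf{Main obstacle.} The delicate point is in Step 1: checking that the action obtained from $\cF$ agrees with the geometrically defined $\phi^*$, and the compatibility of base change of $\phi^*$ from $A$ to $K$. I would settle this using the compatibility statement in \ref{thm;main2a} applied to the resolution $\tilde X$ of the graph.
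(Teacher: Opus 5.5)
Your overall route is the paper's. You reduce to a finitely generated field, detect membership in $\Lambda$ via complete discretely valued fields $K$, and there use (a) and (c) of Theorem~\ref{thm;main}, applied to $\pr_1,\pr_2\colon\tilde{\Gamma}_K\to X_K$, to get a $\phi^*$-stable full lattice $H^i(\cF(X_K))/\mathrm{tor}$. Serre duality handles $\omega_X$.

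\textbf{The gap is in Step 3.} As written, it cannot produce the valuation you need. Since $c\in A$, the element $c^{-1}$ is a unit in $A[c^{-1}]$, so there is no prime of $A[c^{-1}]$ containing it. More fundamentally, any valuation that dominates a local ring of $\Spec A$ or of $\Spec A[c^{-1}]$ contains $A$, and so does any DVR on the normalization of a curve in these. All of them are therefore $\ge 0$ on $c$. So no construction based on points of $\Spec A$ can give $v(c)<0$: the valuation has to be centered ``at infinity'' of $\Spec A$.

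The standard fix, after replacing $k$ by $\Frac(A)$, over which $X$, $\phi$ and $\tilde{\Gamma}$ are defined:
\begin{enumerate}
\item Since $c$ is not integral over $\Lambda_0$, $c^{-1}$ is not a unit in $\Lambda_0[c^{-1}]$.
\item Take a valuation ring $V$ of $\Frac(A)$ dominating a maximal ideal of $\Lambda_0[c^{-1}]$ that contains $c^{-1}$.
\item Choose field generators $a_1,\dots,a_n$ of $\Frac(A)$ lying in $V$, replacing a generator by its inverse if necessary, and set $B=\Lambda_0[c^{-1},a_1,\dots,a_n]$.
\item Take a DVR of $\Frac(A)$ dominating the noetherian local domain $B_{\fm_V\cap B}$. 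It is not a field, since $0\neq c^{-1}$ lies in its maximal ideal, so the DVR gives $v(c)<0$.
\end{enumerate}
Alternatively, invoke what the paper uses: for $k$ finitely generated over its prime field, $\Lambda$ is the intersection of all discrete valuation rings of $k$.

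With such a $v$ you have $A\not\subseteq\cO_v$, so the spreading out in Step 2 plays no role. Freeness and base change of $H^i(X_A,\cO)$ are not needed, and your ``main obstacle'' about passing from $A$ to $K$ disappears. Concretely:
\begin{itemize}
\item $K=\widehat{k}_v$ is a field extension of $k$, with $X_K=X\otimes_k K$ and $H^i(X_K,\cO)=H^i(X,\cO)\otimes_k K$.
\item $\tilde{\Gamma}_K$ is still a resolution of the graph closure of $\phi_K$, so it computes $\phi_K^*$.
\item Hence the characteristic polynomial is unchanged by base change, and it lies in $\cO_K[T]$ by (a) and (c).
\end{itemize}

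For $\omega_X$, your treatment is at the same level as the paper's one-line appeal to Serre duality. Be aware that the transpose of $\phi^*$ on $H^i(X,\omega_X)$ is $\pr_{2*}\pr_1^*$ on $H^{d-i}(X,\cO_X)$, not $\phi^*$. This is the point you flagged, and it is what ``it suffices to treat $\cO_X$'' would have to address.
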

\def\tGam{\tilde{\Gamma}}

\medbreak

The action of the $k$-rational map $\phi$ on $H^i(X,\cO_X)$ is defined as follows: Take a dense open $U\subset X$ and $g: U \to X$ representing $\phi$. Let $\Gamma_\phi\subset X\times_k X$ be the closure of the graph $\Gamma_g\subset U\times_k X$ and choose $\tGam\to \Gamma_\phi$, a resolution of singularities, which exists thanks to \cite{Hironaka} and \cite{CossartPiltant-ResArith3folds} by the assumption $\dim(X)\leq 3$.
Let $\pr_i:\tGam\to X$ be the two projections. Then, $\phi^*$ is defined to be the composite
\[ H^i(X,\cO_X) \rmapo{\pr_2^*} H^i(\tGam,\cO_{\tGam}) \rmapou{(\pr_1^*)^{-1}}{\simeq}  H^i(X,\cO_X) ,\]
where $\pr_1^*$ is an isomorphism, see e.g. \cite[Theorem 1.1]{CR15}, noting that $\pr_1$ is an isomorphism over $U$. 
Theorem \ref{thm;application} is reduced to the case $k$ is finitely generated over its prime field.
Noting that $\Lambda$ is the intersection of the completions of the valuation rings of all discrete valuations on $k$, it is then reduced to showing 
${\rm det}\big(T-\phi^*|H^i(X,\cO_X)\big)\in \OK[T]$ if $K$ is a complete discrete valuation field. 
Then it follows by Theorem \ref{thm;main}\ref{thm;main2c} and Serre duality.

\medbreak

We remark that in case $\ch(k)=0$, 
Theorem \ref{thm;application} can also be deduced from  Hodge theory: By the Lefschetz principle, we may assume $k=\C$.
Then, a rational map $\phi$ induces a map on the cohomology $H^i(X(\C)^{an},\C)$ via a correspondence action, which preserves the lattice $H^i(X(\C)^{an},\Z)$. By Hodge theory we have that ${\rm det}\big(T-\phi^*|H^i(X,\cO_X)\big)$ is a factor of 
${\rm det}\big(T-\phi^*|H^i(X(\C)^{an},\C)\big)$, 
 which lies in $\Z[T]$.
 \medbreak

In positive characteristic, we get the following corollary.
 
\begin{cor-intro}\label{cor:application}
Let $k$ be a field of characteristic $p>0$ and let $X$ be a smooth {and proper $k$-scheme of dimension}  $\le 3$.
Then  ${\rm Aut}_k(k(X))$ the group of birational $k$-automorphisms of $X$ acts quasi-unipotently on 
$H^i(X, \sO_X)$, for all $i$.    
\end{cor-intro}
Here we note that in general an automorphism of the function field  $\varphi\in {\rm Aut}_k(k(X))$ defines a morphism  
\[\varphi^*:=\Gamma^*_{\varphi}: H^i(X, \sO_X)\to H^i(X, \sO_X),\] 
by considering $\Gamma_{\varphi}$ the closure in $X\times_k X$
of the graph of the map $\Spec k(X)\to \Spec k(X)$ induced by $\varphi$. This is a well-defined action by \cite[Theorem 3.1.8 and Proposition 3.2.2]{CR11}, in case $k$ is perfect, 
and by \cite[Theorem 4.17 and Theorem 5.1]{Amazeen} in general.\footnote{Here \cite[Theorem 4.17]{Amazeen} ensures that we have a well-defined morphism $\Gamma_{\varphi}^*$ and \cite[Proposition 5.1]{Amazeen} ensures that $\Gamma_{\varphi}^*\circ \Gamma_{\psi}^*=\Gamma_{\varphi\circ\psi}^*$.}
To prove the corollary, we note that $X$ and any automorphism of its function field are defined over some finitely generated field
and by base change we may henceforth assume that $k$ is finitely generated over $\F_p$. Thus the integral closure of $\F_p$ in 
$k(X)$ is a finite field and Theorem \ref{thm;application} implies that all eigenvalues of $\varphi^*$ acting on $H^i(X, \sO_X)$ 
lie in $\F_q^\times$, for some $q=p^r$, and hence are $(q-1)$st roots of unity.

\medbreak

 \subsection*{Construction of $\cF$.}
Now we explain how to construct the functor $\cF$ from Theorem \ref{thm;main} using a variant of \emph{tame} cohomology introduced by  H\"ubner and Schmidt \cite{HS2020}.
We remark that it is possible to construct such $\cF$ satisfying \ref{thm;main2a}, \ref{thm;main2b} and \ref{thm;main2c} without use of tame cohomology, only using classical rigid-analytic theorems of Bartenwerfer and van der Put, while \ref{thm;main2d} will follow from a comparison theorem of  the cohomology of certain sheaves on the tame site and cohomology of rigid spaces which is viewed as an integral  GAGA theorem, as explained below.

 Let $K\supset R$ be as in Theorem \ref{thm;main} and put $\tS=\Spec(R)$ and $S=\Spec(K)$. 
To construct such $\cF$ as in Theorem \ref{thm;main}, we consider the category $\SchSS$ whose objects are pairs $(U,\tU)$ of open immersions $U\hookrightarrow \tU$ over $\tS$ such that $U\to \tS$ factors through $S\hookrightarrow \tS$.
We then equip $\SchSS$ with the \emph{tame} topology introduced by  H\"ubner and Schmidt \cite{HS2020} modified in \cite{MRS-1}, which has the covering families recalled in \ref{def;tametopology-intro}\ref{def;tametopology-intro4} below.  
The corresponding site is denoted by $\SchSSt$. 
For $(X,\tX)\in \SchSS$, let $(X,\tX)_t$ be the site whose underlying category is the category of objects $(U,\tU)$ over $(X,\tX)$ with $U\to X$ \'etale, endowed with the induced topology.    

For $X$ a separated $S$-scheme of finite type, choose a Nagata compactification $X\hookrightarrow \tX$ of $X\to \tS$ and define 
for a sheaf of abelian groups $F$ on $\SchSSt$
 \[ R\Gamma_t(X/R,F) =R\Gamma((X,\tX)_t,F_{|(X,\tX)_t}).\]
Then, it turns out that $R\Gamma_t(X/R,F)$ does not depend on the choice of $\tX$ and extends to a functor 
 \[ R\Gamma_t(-/R,-): (\Sch_K)^{op}\times \Sh^{\rm ab}(\SchSSt)  \to \cD(\Z).\]
In Theorem \ref{thm;main} we choose  $\cF$ as 
\eq{eq;cFr}{
 \Sm_K\ni X\mapsto \cF(X) := R\Gamma_t(X/R,\G_a(0)),}
with $\G_a(0)$  given by  
\eq{eq;Ga0intr}{ \G_a(0)(U,\tU)= \Big\{a\in \cO(U)|\; a\in  \sqrt{\pi \sO(\tU^{\rm  int})}\Big\},}
where $\pi$ is a fixed pseudo uniformizer of $R$ and $\tU^{\int}$ is the integral closure of $\tU$ in $U$.
We remark that there are other possible choices for $\cF$ which essentially change the resulting lattice by multiplication with 
some power of $\pi$. The main features of $\G_a(0)$, which are non-trivial, 
are that it defines a sheaf in the tame topology 
and  when restricted to  $\tX_{\et}$ is a quasi-coherent sheaf and is {\em almost coherent} 
in the sense of Zavyalov when further restricted to $\tX_{\Zar}$ and $K$ is algebraically closed. 
Moreover, if $\fXrig$ denotes the rigid space over $K$ associated to the formal completion $\fX$ of $\tX$ 
along the special fiber, then the rigidification $\G_a(0)_{\fXrig}$ is the sheaf $\cO(1)$  on $\fXrig$
considered by Bartenwerfer and van der Put. This is the sheaf given on an affinoid subdomain 
$U=\Sp(B)\subset \fXrig$ by 
\[\cO(1)(U)=\{f\in B\mid |f|_{\sup}<1\},\] 
where $|-|_{\sup}$ is the sup norm on $B$, i.e.,
 $\supn f = \sup_{x\in \Sp(B)} |f(x)|$. See Lemma \ref{lem:Gar3} and Corollary \ref{cor:Gar}. We also remark 
that for our purposes, it does not suffice to consider the tame structure sheaf $\cO^t$ 
given by $\cO^t(U,\tU)= \sO(\tU^{\rm  int})$ instead as its rigidification 
is the  rigid integral structure sheaf $\cO^o$ and it is not known whether the cohomological vanishing results which we need also hold for $\cO^o$. This is related to the fact that
the quotient $\sO/\sO^o$ of the rigid structure sheaf by the integral rigid structure sheaf 
{does not seem to be overconvergent} in the sense of \cite{dJvdP}, whereas $\sO/\sO(1)$ is.
See below for more details.

 \medbreak

 The main result of this article is that $\cF$ satisfies the properties of Theorem \ref{thm;main}. 
 The properties \ref{thm;main2}\ref{thm;main2a} and \ref{thm;main2d} follow directly from the 
 definition.
A key ingredient in the proof of \ref{thm;main2}\ref{thm;main2b} and \ref{thm;main2c} and the fact that {$\cF$ takes values in $\cD^{\geq 0}(R)^f$} is the following comparison theorem. 

\begin{theorem}[Theorem \ref{thm;comparison-final}]\label{thm;comparison-thm1}
Let $R\subset K$ be as above. Let $X$ be a normal separated $K$-scheme of finite type and 
let $X\hookrightarrow \tX$ be a compactification over $R$. 
Let $\fXrig$ be the rigid space over $K$ associated to the formal completion $\fX$ of $\tX$ along the special fiber. 

If $X$ is {\em proper} over $K$ , then there exists a canonical equivalence
\[  R\Gamma_t(X/R,\G_a(0))\simeq R\Gamma(\fXrig,\cO(1)).\]
 
The same holds if $X$ is smooth (not necessarily proper) over $K$ and $\ch(K)=0$ or $\dim X\le 3$.
\end{theorem}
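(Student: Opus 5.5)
The plan is to compare both sides through an intermediate object: the cohomology on the tame site of $(X,\tX)$ restricted to the formal neighbourhood of the special fibre. The statement is then split into a "proper" integral GAGA step and a "smooth" step that reduces to the proper case via compactification.

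\emph{Step 1: the proper case.} Assume $X$ is proper over $K$. Then $\tX$ can be taken proper over $R$ with generic fibre $X$. I would first show that $\G_a(0)$ restricted to $\tX_{\et}$ (via $\tU\mapsto(\tU_K,\tU)$) is quasi-coherent, and that its tame cohomology agrees with its étale, hence Zariski, cohomology on $\tX$. The latter uses that tame covers of pairs $(U,\tU)$ with $\tU$ normal are refined by covers coming from $\tX_{\et}$ after normalized blow-ups centred in the special fibre, and that $\G_a(0)$ is insensitive to such admissible blow-ups because it is defined via $\tU^{\int}$ and radicals.

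Next I would express $\G_a(0)|_{\tX_{\Zar}}$ as the almost coherent sheaf $\sqrt{\pi\sO_{\tX^{\int}}}$. Formal GAGA for proper $R$-schemes (Grothendieck existence, in its almost-coherent form à la Zavyalov when $K$ is algebraically closed, and the usual coherent form when $R$ is a DVR) then identifies its cohomology with that of its $\pi$-adic completion on $\fX$. Finally, the theorem of Raynaud type comparing sheaves on $\fX$ after all admissible blow-ups with sheaves on $\fXrig$ identifies this with $R\Gamma(\fXrig,\cO(1))$. For this I use Lemma \ref{lem:Gar3} and Corollary \ref{cor:Gar}, which say that the rigidification of $\G_a(0)$ is $\cO(1)$. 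A subtle point is passing between the limit over blow-ups and the colimit defining rigid cohomology, where one needs finiteness: bounded torsion gives Mittag-Leffler.

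\emph{Step 2: the smooth case.} Let $X$ be smooth. Choose a smooth compactification $X\subset \ol X$ with $D=\ol X\setminus X$ a divisor; this exists if $\ch K=0$ or $\dim X\le 3$. Take $\tX$ to be a compactification of $\ol X$ over $R$. Both sides are independent of choices, so we may replace $\tX$ suitably. The goal is to show that both restriction maps
\[R\Gamma_t(\ol X/R,\G_a(0))\to R\Gamma_t(X/R,\G_a(0)),\qquad R\Gamma(\ol{\fX}^{\rig},\cO(1))\to R\Gamma(\fXrig,\cO(1))\]
are equivalences, where $\fXrig$ is now a quasi-compact open of $\ol{\fX}^{\rig}$. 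Step 1 then yields the claim.

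For the rigid side, I would use Bartenwerfer's vanishing for $\cO(1)$ on smooth affinoids together with van der Put's results. Locally near $D$, with coordinates, $\fXrig$ is the complement of a tube of $D$, i.e. $\{|t|\ge\epsilon\}$ in a smooth affinoid. Computing $\cO(1)$-cohomology of polyannuli via Laurent expansions shows that the negative-power parts have sup norm $<1$ contributions, which cancel in the Čech complex. Put differently, $\cO/\cO(1)$ is overconvergent, which gives the excision. For the tame side, the analogous statement is the purity / birational invariance of $\G_a(0)$ along $D$. I would reduce it, by the same GAGA comparison applied to the pair, to the rigid statement.

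\emph{Main obstacle.} I expect the hardest part to be this excision across $D$, namely the local Laurent-series computation of $H^*(\cO(1))$ on products of annuli and discs, and its globalisation by Mayer--Vietoris. This computation is where Bartenwerfer's result is needed and where the choice of $\cO(1)$ rather than $\cO^o$ matters. I would first try the one-variable annulus computation and then induct on the number of components of $D$.
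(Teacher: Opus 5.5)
Your Step 1 is essentially the paper's proof of the proper case (Theorem \ref{thm;comparison} with Corollary \ref{cor:Gar}), with two corrections. First, the tame cohomology is the colimit \eqref{eq:et-zar} of Zariski cohomologies over all admissible blow-ups of $\tX$. It need not agree with the Zariski cohomology on $\tX$ itself: sections are insensitive to blow-ups, cohomology is not. Second, the passage from this colimit to $R\Gamma(\RZ(\fX),-)$ is cohomology commuting with a filtered colimit over a cofiltered limit of coherent spaces; no Mittag-Leffler argument is involved.

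The genuine gap is Step 2. Since $\tX$ is a compactification of $X$ over $R$, it is proper over $R$. So $\fXrig$ is the analytification of the proper $K$-scheme $\tX_K$, not a quasi-compact open of it. In your notation the right-hand side is $R\Gamma(\ol{\fX}^{\rig},\cO(1))$ itself, and what must be proved is exactly the tame birational invariance $R\Gamma_t(\ol X/R,\G_a(0))\simeq R\Gamma_t(X/R,\G_a(0))$.

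Moreover, the rigid excision you propose is false. Take $X=\A^1\subset\ol X=\P^1$. The complement of an open tube around $\infty$ is a closed disc. Restriction then goes from $K(1)$ (Lemma \ref{lem1;CohRigid}) to the module of all power series of sup norm $<1$ on that disc (Theorem \ref{thm.van1}), which is not even in $\Mod_R^f$. Since you intended to deduce the tame statement from this rigid one, the smooth case is left unproven. Overconvergence of $\cO/\cO(1)$ gives no excision across $D$; in the paper it is what allows the de Jong--van der Put base change theorem along proper maps. Also, independence of the right-hand side from $\tX$ is an output of the argument, not an input; it can fail, e.g., if $\tX_K$ is non-normal along the boundary.

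The missing idea is how the tame site sees $D$.
\begin{enumerate}
\item For $X\subsetneq\tX_K$, \eqref{eq:et-zar} writes $R\Gamma((X,\tX)_t,\G_a(0))$ as a colimit over $X$-admissible blow-ups of $\tX$, whose centres may meet the generic fibre inside $D$.
\item Since $\G_a(0)$ is tamely birational (Lemma \ref{lem:Gar-shift}\ref{lem:Gar-shift1}), integral GAGA (Theorem \ref{thm;comparison}) applies on each such blow-up. This gives a colimit of $R\Gamma((\fX')^{\rig},\cO(1))$ over the proper rigid spaces of blow-ups of $\ol X$ centred in $D$.
\item Principalization (Hironaka, resp.\ Cossart--Piltant) makes blow-ups in regular centres cofinal (Theorem \ref{thm:colim-smooth-hironaka}). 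This is where $\ch(K)=0$ or $\dim X\le 3$ enters.
\item The transition maps are then equivalences by Lemma \ref{lem2;CohRigid}: for a blow-up $f$ in a regular centre, $\cO(s)\simeq Rf^{\rig}_*\cO(s)$. This is proved via the triangle $\cO(s)\to\cO\to\cO/\cO(s)$, GAGA for $\cO$, and overconvergence plus base change to reduce to the fibres $\P^n$. On those fibres one uses $R\Gamma(\P^n,\cO(s))=K(s)[0]$, which follows from the Bartenwerfer--van der Put vanishing on generalized polydisks.
\end{enumerate}
No Laurent-series excision on annuli is needed.
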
 

In particular, if $X$ is smooth and if $\ch(K)=0$ or $\dim(X)\leq 3$, we have an equivalence
\[  \cF(X)\simeq \cF(U),\]
for any dense open subscheme $U\subset X$, proving Theorem \ref{thm;main}\ref{thm;main2}\ref{thm;main2c} (see Remark \ref{rmk;thm;comparison-final}).
This  may be viewed as a birational invariance of $\A^1$-invariant sheaves, in the sense of \cite{KS}.

\medbreak

In the case $K$ is a complete discrete valuation field and $X$ is proper over $K$,
Theorem \ref{thm;comparison-thm1} is a special case of the following {\em integral} variant of GAGA (cf. \cite[\FKC{II}.9.4.2]{FK}). For $(X,\tX)\in \SchSS$ with $X=\tX\otimes_{\OK} K$, we introduce a subcategory
$\ShvCoh{(X,\tX)}$ of $\Shv((X,\tX)_t)$ consisting of $\cO^t$-modules on $(X,\tX)_t$, which are coherent as $\sO_{\tU}$-modules 
when restricted to any $(U, \tU)\in (X,\tX)_t$ and construct a functor
\[ \ShvCoh{(X,\tX)} \to \Shv (\fXrig), \quad F \to F_{\fXrig},\]
such that $ \G_a(0)_{\fXrig}=\cO(1)$ (see \S\ref{para:tame-qcoh} and Corollary \ref{cor:Gar}  for details).

\begin{theorem}[Theorem \ref{thm;comparison}]\label{thm;comparison-intro}
If $\tX$ is proper over $\OK$, there is an equivalence 
\[ R\Gamma((X,\tX)_t,F) \simeq R\Gamma(\fXrig,F_{\fXrig}),  \quad  
\text{for } F\in \ShvCoh{(X,\tX)}.\]
\end{theorem}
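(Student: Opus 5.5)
The comparison will go through the étale site of $\tX$ and the formal GAGA theorem. There are three steps: (i) compute tame cohomology of $F$ via the étale site of $\tX$; (ii) pass from algebraic coherent cohomology on $\tX$ to formal cohomology on $\fX$ by Grothendieck's existence/comparison theorem; (iii) pass from $\fX$ to $\fXrig$, using that $F_{\fXrig}$ is obtained from the formal completions over admissible blow-ups.

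For (i), consider the morphism of sites $\epsilon:(X,\tX)_t\to\tX_{\et}$, $V\mapsto(V_K,V)$. I would first show that $R\epsilon_*F$ is computed by the colimit, over all admissible blow-ups $\tX'\to\tX$ (centred in the special fibre), of the coherent sheaves $F_{\tX'}$ pushed forward to $\tX$. The reason is that tame covers of $(U,\tU)$ are refined by étale covers of blow-ups of $\tU$ together with tame Kummer-type covers; I will assume the earlier description of the tame topology gives this. Thus
\[R\Gamma((X,\tX)_t,F)\simeq \varinjlim_{\tX'} R\Gamma(\tX'_{\et},F_{\tX'}) = \varinjlim_{\tX'}R\Gamma(\tX'_{\Zar},F_{\tX'}).\]
The key input here is that tame descent for coherent-type sheaves reduces to étale descent on models. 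This requires showing that $\ShvCoh{(X,\tX)}$ sheaves have no higher cohomology along tame extensions at the boundary. I would prove that by a trace argument, since the tame degrees are prime to $p$, which makes the trace on $\cO^t$-modules surjective.

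For (ii), each $\tX'$ is proper over $\OK$ and $F_{\tX'}$ is coherent. Formal GAGA then gives $R\Gamma(\tX',F_{\tX'})\simeq R\Gamma(\fX',\widehat{F_{\tX'}})$, where $\fX'$ is the $\pi$-adic completion. For $K$ a discrete valuation field this is EGA III. For $K$ algebraically closed (non-noetherian $R$) I would use Fujiwara--Kato \cite[\FKC{I}]{FK}, which requires $F_{\tX'}$ to be finitely presented; if one only has almost coherence, one passes to the almost category. Since admissible blow-ups of $\tX$ correspond cofinally to admissible formal blow-ups of $\fX$, taking the colimit yields $\varinjlim_{\fX'}R\Gamma(\fX',\widehat{F_{\fX'}})$.

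For (iii), Raynaud's theory identifies the site of $\fXrig$ (with the strong topology generated by quasi-compact admissible opens) with the limit of the Zariski sites of the admissible blow-ups $\fX'$. By the construction of $F\mapsto F_{\fXrig}$, the sheaf $F_{\fXrig}$ is the colimit of the pullbacks of the $\widehat{F_{\fX'}}$. Cohomology on a coherent limit topos commutes with filtered colimits of sheaves, so $R\Gamma(\fXrig,F_{\fXrig})\simeq\varinjlim_{\fX'}R\Gamma(\fX',\widehat{F_{\fX'}})$. Composing with (i) and (ii) gives the claim.

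I expect the main obstacle to be step (i): proving that tame cohomology of $F$ coincides with the colimit of étale/Zariski cohomology over admissible blow-ups. Specifically, one must show that the tame-but-not-étale part of the covers contributes nothing to higher cohomology. I would attack this with the Čech spectral sequence for a tame hypercover, comparing with its étale refinement on a blow-up and using the trace for degree-prime-to-$p$ extensions.
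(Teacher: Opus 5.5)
Your three steps are exactly the paper's chain of equivalences: the tame-to-blow-up colimit \eqref{eq:et-zar} coming from Theorem \ref{thm:tame-vs-etale-intro}, then GFGA \eqref{eq:GFGA} using that $\cO_K$ is $\pi$-adically complete, then commutation of cohomology with filtered colimits on the coherent space $\RZ(\fX)$ (\cite[Proposition \FKC{0}.4.4.1]{FK}) combined with \eqref{eq2;FRZ}. The step you flag as the main obstacle is not reproved in the paper; it is imported from \cite[Theorem 7.2]{MRS-1}, whose hypothesis (p) holds automatically for $\cO^t$-modules, and this is precisely the $\Z_{(p)}$-linearity your trace argument would exploit.
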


Using Zavyalov's theory of quasi-coherent almost coherent $\sO$-modules \cite{Zavyalov},
we get a similar result as above also in the case where $K$ is algebraically closed, see 
Theorem \ref{thm:almost-comparison}.
\medbreak

The end of the proof of \ref{thm;main2}\ref{thm;main2b} and \ref{thm;main2c} is now completely reduced to computations in classical rigid geometry, namely:
\begin{lem-intro}[Lemma \ref{lem1;CohRigid}, Lemma \ref{lem:hdi-general}]\label{lem1;CohRigid-intro}
Let $f^{\rig}:\P^{n,\rig}\times \fXrig\to \fXrig$ be the rigidification of the projection map $\P^n\times \tX\to \tX$.
Then, for any $s\in \R_{>0}$, 
\[Rf^{\rig}_*(\cO_{\P^{n,\rig}\times \fXrig}(s))\simeq \cO_{\fXrig}(s),\]
where the sheaf $\cO_{\cY}(s)$ on a rigid space $\cY$ over $K$ is given by
$\cO_{\cY}(s)(U)=\{f\in B\mid |f|_{\sup}<s\}$ on an affinoid subdomain 
$U=\Sp(B)\subset \cY$.
\end{lem-intro}

Using the base change theorem in rigid geometry (see \cite[Theorem 2.7.4]{dJvdP}), the lemma is deduced from the 
following vanishing due to Bartenwerfer \cite[Theorem]{B2} and van der Put \cite[Theorem 3.15]{vdP}
\[H^i(D,\sO(s))=0,\quad \text{for all $s>0$ and integers $i>0$},\]
where  $D\subset \bB^d_K=\Sp(K\langle z_1,\dots,z_d\rangle)$ is a generalized polydisc.
Again, using the base change theorem in rigid geometry the above is reduced to the case $d=1$, which is proved by using the Mittag-Leffler decomposition of analytic functions 
on affinoid subdomains of the unit disc.
We also note that the base change theorem in rigid geometry 
is only useful when applied 
to cohomology of overconvergent sheaves, 
which is another reason why we must work with $\G_a(0)$ from 
\eqref{eq;Ga0intr} instead of $\cO^t$.
In our understanding, it is this analytic computation which makes it possible to get around resolution of singularities over $R$.

\medbreak

Finally, the fact that the cohomology of $\cF(X)$ lies in $\Mod_R^f$ {for $X\in \Sm_K$} is proven in Theorem \ref{thm:fg-uncond}.
The proof uses  finiteness results of the cohomology of a proper $R$-scheme with 
(almost) coherent coefficients, and the following fact from rigid geometry:
for any smooth affinoid space $V$ over a non-archemidean field $K$ and  for any $i>0$, there exists $c\in K$ with $|c|<1$ such that 
$c\cdot H^i(V,\sO(s)) =0$ for all $s\in \R_{>0}$, which is proven by Bartenwerfer in  
\cite[Folgerung~3]{B2} (see also \cite[Theorem 17]{KST-BQ}).
The proof of this result is analytic. It uses the open mapping theorem for surjective linear maps of Banach spaces.

\medbreak

\subsection*{Future work}

In case $K$ is a cdvf and $\ch(K)=0$, we hope the construction of $\cF$ can be generalized 
to an integral structure on de Rham (and crystalline) cohomology. To this end one has to 
construct suitable sheaves  $\Omega^q(r)$ on the tame site $(X,\tX)_t$ 
which are coherent when restricted  to $\tX_{\Zar}$ and for which  a variant of Lemma 
\ref{lem1;CohRigid-intro} and of the vanishing \cite[Folgerung 3]{B2} referred to above 
can be proven. One candidate is $\Omega^{q,t}\otimes_{\sO^t}\G_a(0)$, but for the moment we do not know if this satisfies any of these properties.
\subsection*{Acknowledgments}
We thank Keiji Oguiso for his suggestion to consider rational maps in Theorem \ref{thm;application}. We thank Bhargav Bhatt for his comments on earlier versions, which in particular inspired the content of \S \ref{ss;sing}. We thank Andreas Bode, Tobias Schmidt, and Vasudevan Srinivas for useful discussions, and Daichi Takeuchi for providing a Hodge theoretic proof of Theorem \ref{thm;application} in case $\ch(k)=0$.

\section{Recollection}

In this section, we collect some results and constructions of \cite{MRS-1}.

 \def\k{\kappa}
\begin{para}\label{def;tametopology-intro}
Recall the definition of the tame site from \cite[Definition 2.4]{MRS-1}, which is an algebraic version of H\"ubner's adic-tame site \cite{Huebner2021}.
	\begin{enumerate}[label=(\arabic*)]
		\item\label{def;tametopology-intro1}
		Let $X\hookrightarrow \tX$ be a quasi-compact open immersion of qcqs schemes.
		Let $(X,\tX)_\tau$ be the category of pairs $(U,\tU)$ consisting of quasi-compact open immersions $U\to \tU$ of $\tX$-schemes such that $\tU\to \tX$ is integral over finite type (ift) (see \cite[\S 1]{MRS-1}) 
        and $U\to \tX$ factors through an \'etale morphism $U\to X$.
		Morphisms $(V,\tV)\to (U,\tU)$ are pairs of morphisms $f: V\to U$ in $X_{\et}$ and $\tf:\tV\to \tU$ a morphism of $\tX$-schemes satisfying the obvious compatibility.
		  \item\label{def;tametopology-intro2}
		For $ (U,\tU)\in (X,\tX)_\tau$, let $\Spa(U,\tU)$ be
        the set of triples $(x,v,\epsilon)$ such that $x\in U$, $v$ is a valuation on $\k(x)$ and $\epsilon\colon \Spec(\cO_v)\to \tU$ is a map compatible with $\Spec(\k(x))\to X$.
		\item\label{def;tametopology-intro3}
		A morphism $(f,\tf):(V,\tV)\to (U,\tU)$ in $(X,\tX)_\tau$ is \emph{tame over $(x,v,\epsilon_v)\in \Spa(U,\tU)$} if 
		there is $(y,w,\epsilon_w)\in \Spa(V,\tV)$ such that $f(y)=x$, $w_{|k(x)}=v$, and $w/v$ is tamely ramified and the following diagram commutes:\[
		\begin{tikzcd}
			\Spec(\cO_w)\ar[r,"\epsilon_w"]\ar[d,"(f)_{|y,w\geq 0}"]&\tV\ar[d,"\tf"]\\
			\Spec(\cO_v)\ar[r,"\epsilon_v"]&\tU.
		\end{tikzcd}
		\]
		If $(f,\tf)$ is a \emph{quasi-modification}, i.e.,  $f$ is an isomorphism and $\tf$ is universally closed, 
        then $(f,\tf)$ is tame at every $(x,v,\epsilon_v)\in \Spa(U,\tU)$, by the valuative criterion.    
		\item\label{def;tametopology-intro4}
		The \emph{tame} topology on $(X,\tX)_\tau$
		is generated by families $\{(f_i,\tf_i): (V_i,\tV_i) \to (U,\tU)\}_i$ of maps such that for every $(x,v,\epsilon_v)\in \Spa(U,\tU)$,
		there is $i\in I$ such that $(V_i,\tV_i) \to (U,\tU)$ is {tame} over $(x,v,\epsilon_v)$.
		The corresponding site is denoted by $(X,\tX)_t$. For  $F\in \Sh((X,\tX)_t)$ a sheaf of sets on 
        $(X,\tX)_t$, every quasi-modification $(X,\tY)\to (X,\tX)$ induces a bijection $F(X,\tX)\to F(X,\tY)$, 
        see \cite[Lemma 2.7]{MRS-1}.
        \item\label{def;tametopology-intro5} For every $(U,\tY)\in (X,\tX)_t$, we let $F_{(U,\tY)_{\et}}$ 
        (resp. $F_{(U,\tY)_{\Zar}}$) denote the sheaf on $\tY_{\et}$ (resp. $\tY_{\Zar}$) given by 
        \[
        \tV/\tY\mapsto F(U\times_{\tY}\tV,\tV).\]        
		\end{enumerate}
	\end{para}
	
\begin{thm}[{\cite[Theorem 7.2]{MRS-1}}]\label{thm:tame-vs-etale-intro}
	Let $X\inj \tX$ be a quasi-compact open immersion of qcqs schemes. 
	Let $F$ be a sheaf of abelian groups on $(X,\tX)_t$ such that the following condition is satisfied:
	\begin{enumerate}[label=(p)]
		\item\label{thm:tame-vs-etale1}
		for every $(U,\tU)\in (X,\tX)_t$ and $x\in \tU$, $F(\Spec(\sO_{\tU,x})\times_{\tU} U, \Spec(\sO_{\tU,x}))$ is a $\Z_{(p_x)}$-module, where $p_x$ is the exponential characteristic of $\kappa(x)$.
	\end{enumerate}
	Then, we have  canonical isomorphisms 
	\[
	H^i((U,\tU)_t, F)\cong \colim_{(U,\tY) \in \LtX(U)} H^i(\tY_{\et}, F_{(U,\tY)_{\et}})
    \cong \colim_{(U,\tX') \in \StX(U)} H^i(\tX'_{\et}, F_{(U,\tX')_{\et}}), \quad i\ge 0.
	\]
	Here the first colimit is indexed by the (cofiltered) category $\LtX(U)$ of all quasi-modifications $\cV\to \cU$ with $\cV$ integral, and the second colimit is indexed by the (cofiltered) category $\StX(U)$ of admissible blow-ups $(X,\tY)\to (X,\tX)$, in a coherent ideal $\cA\subset \cO_{\tX}$ such that the support of $\cO_{\tX}/\cA$ is contained in $\tX\setminus X$.
\end{thm}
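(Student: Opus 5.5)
This statement is quoted from \cite[Theorem 7.2]{MRS-1}, so in the paper it is imported rather than reproved. If I were to prove it, I would compare the tame site $(U,\tU)_t$ with the "limit" of the \'etale sites $\tY_{\et}$ over all quasi-modifications. I would proceed in three steps.

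\textbf{Step 1: the colimit of \'etale sites.} Consider the functor sending $(U,\tY)\in \LtX(U)$ to the \'etale site $\tY_{\et}$. Each \'etale $\tV\to\tY$ gives the object $(U\times_{\tY}\tV,\tV)$ of $(U,\tU)_t$, and \'etale coverings are tame coverings. Hence there is a morphism of sites from $(U,\tU)_t$ to the cofiltered limit of the topoi $\tY_{\et}$. This yields a natural map
\[\colim_{\LtX(U)} H^i(\tY_{\et},F_{(U,\tY)_{\et}})\to H^i((U,\tU)_t,F).\]
The second isomorphism in the statement is then the easy part. Admissible blow-ups are cofinal among quasi-modifications up to passing to integral closures and limits: any proper quasi-modification is dominated by an admissible blow-up (Raynaud--Gruson flattening, or the usual domination lemma). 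Taking integral closures does not change sections of tame sheaves, by \cite[Lemma 2.7]{MRS-1}.

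\textbf{Step 2: tame covers versus \'etale covers after modification.} The key claim is that every tame covering of $(U,\tY)$ is refined, after replacing $\tY$ by a further quasi-modification, by a family of \'etale maps composed with covers that are $\Z_{(p)}$-prime-to-$p$ Kummer-type. Concretely, I would use that $\Spa(U,\tY)$ is the inverse limit of the underlying spaces of the $\tY$ (a Riemann--Zariski type statement). A tame covering is then a covering on the limit of these spaces, and so it descends to a covering at some finite stage.

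Tame but non-\'etale ramification at the boundary is handled by condition \ref{thm:tame-vs-etale1}. Locally at $x\in\tY$, a tamely ramified extension has degree prime to $p_x$. A trace argument then shows that the \v{C}ech complex of such a cover is split exact on a $\Z_{(p_x)}$-module. Equivalently, the higher cohomology contributions from the ramified parts vanish, so the tame sheafification agrees with the colimit of \'etale sheafifications on stalks.

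\textbf{Step 3: passing to cohomology.} I would first show the comparison for $i=0$ using the sheaf condition and the quasi-modification invariance. Then I would compare Godement/\v{C}ech resolutions: injectives on the tame site restrict to colimits of flasque objects on the \'etale sites, which uses that cohomology commutes with cofiltered limits of qcqs schemes with affine transition maps. A spectral sequence/d\'evissage completes the argument.

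The main obstacle is Step 2. It requires a precise local structure theorem for tamely ramified valuations (Abhyankar-type local uniformization up to tame extensions) and the trace argument, and I would check it on strict henselizations first.
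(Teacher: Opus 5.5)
The paper gives no proof of this statement; it is quoted from \cite[Theorem 7.2]{MRS-1}. Judged on its own, your outline has the right ingredients: $\Spa(U,\tU)$ as the limit of the underlying spaces of quasi-modifications, and condition (p) killing the contribution of tame ramification. But Step~2, which carries all the weight, fails as formulated. A tame covering with genuine ramification is never refined by an \'etale covering of any quasi-modification. For example, take $\tU=\Spec\cO$ with $\cO$ a strictly henselian DVR, $U=\Spec K$ its generic point, and adjoin $\pi^{1/n}$ with $n$ prime to $p$. This is a tame covering. A refinement by an \'etale covering of some quasi-modification $\tY$ would, after pulling back along the lift $\Spec\cO\to\tY$ furnished by the valuative criterion, produce a $K$-point of $\Spec K(\pi^{1/n})$. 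So ``descends to a covering at some finite stage'' cannot mean an \'etale covering. The global refinement you actually need (\'etale maps composed with prime-to-$p$ Kummer covers) you propose to extract from local uniformization up to tame extensions. That is not available in positive or mixed characteristic, let alone for arbitrary qcqs $\tX$. Your trace argument has two further problems. On the \v{C}ech complex of a single cover it does not yield vanishing of higher cohomology. Nor can it be run on a global cover at all, since (p) only makes the values of $F$ on the local schemes $\Spec\cO_{\tU,x}$ into $\Z_{(p_x)}$-modules, with $p_x$ varying with $x$.

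What is needed instead is a stalkwise argument at the level of topoi, and it requires no uniformization.

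\textbf{The limit topos.} Let $\cT=\lim_{\tY}\Shv(\tY_{\et})$, the limit over $\LtX(U)$. Quasi-modification invariance gives $g_*F_{(U,\tY')_{\et}}=F_{(U,\tY)_{\et}}$ for every $g\colon\tY'\to\tY$. Hence the continuous functors $\tY_{\et}\to(U,\tU)_t$ assemble to a morphism of topoi $\nu$ from the tame topos to $\cT$, with $p_{\tY*}\nu_*F=F_{(U,\tY)_{\et}}$. The theorem on cohomology of cofiltered limits of coherent topoi (SGA~4, Exp.~VI, \S 8.7) then gives $H^i(\cT,\nu_*F)\cong\colim_{\tY}H^i(\tY_{\et},F_{(U,\tY)_{\et}})$. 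Your cited result on limits of schemes with affine transition maps does not apply here, because the transition maps are blow-ups.

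\textbf{Vanishing of higher direct images.} It remains to show $R^q\nu_*F=0$ for $q>0$, which can be checked at the points of the coherent topos $\cT$. These correspond to geometric points of $\Spa(U,\tU)$. The stalk reduces to the tame cohomology of the corresponding strictly local object, i.e.\ the continuous cohomology of the tame Galois group of a strictly henselian valued field. Every tame extension of such a field is Kummer of degree prime to the residue exponential characteristic $p$. So this group is pro-prime-to-$p$, and its higher cohomology with $\Z_{(p)}$-coefficients vanishes by (p). This is where your trace argument belongs.

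\textbf{The second isomorphism.} Admissible blow-ups are not literally cofinal in $\LtX(U)$, since integral closures need not be finite. Invariance of sections alone is also not enough. You need that an integral quasi-modification $g$ induces an isomorphism on \'etale cohomology. This follows from exactness of $g_*$ for integral $g$ together with $g_*F_{(U,\tY')_{\et}}=F_{(U,\tY)_{\et}}$, plus Raynaud--Gruson domination of the finite-type part.
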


\begin{para}\label{para:beta}
In \cite[\S 8.1]{MRS-1}, we give a recipe to construct tame sheaves. For $x\in X$ and $k(x)\subseteq L$ a separable extension, let $\cO_{X,L}^h$ be the henselization of $X$ at $L$, i.e. the limit of the \'etale neighborhoods $\Spec(L)\to U\to X$. For $F$ an \'etale sheaf on $X$, assume that we are given a family of subgroups\[
\beta:=\{F_w\subseteq F(\cO_{X,L}^h)\}_{(L,w)}
\] 
indexed over all finite tame extensions $(L,w)$ of points $(k(x),v,\epsilon)\in \Spa(X,\tX)$, such that if $(L',w')/(L,w)$ is a tame extension, $F_{w}\subseteq F(\cO_{X,L}^h)$ is mapped to $F_{w'}\subseteq F(\cO_{X,L'}^h)$. 

For $(U,\tU)\in (X,\tX)_t$ consider
\[F_{\beta}(U,\tU):=\left\{a\in F(U)\,\middle\vert\,
\begin{minipage}[c]{7cm} for all $(x,v,\epsilon)\in \Spa(U,\tU)$ 
	there exists  a finite tame extension $(L,w)/ (k(x), v)$, such that  $a_L\in F_w$\end{minipage}\right\},\]
where $a_L$ denotes the pullback of $a\in F(U)$ along  $\Spec \sO^h_{X,L}\to U$. 
\end{para}

\begin{prop} (see \cite[Proposition 8.2]{MRS-1})
	The assignment $(U,\tU)\mapsto F_{\beta}(U,\tU)$ defines a sheaf on $(X,\tX)_t$. 
\end{prop}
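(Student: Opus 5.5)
To show that $(U,\tU)\mapsto F_\beta(U,\tU)$ is a tame sheaf, I would first check that it is a presheaf and then verify the sheaf axiom for a tame covering $\{(f_i,\tf_i):(V_i,\tV_i)\to(U,\tU)\}_{i\in I}$. Since $F_\beta(U,\tU)\subseteq F(U)$, separatedness and gluing can be reduced to the étale sheaf property of $F$ on $X_{\et}$, provided two membership statements hold. The first is that pullbacks of sections of $F_\beta$ remain in $F_\beta$. The second is that a section of $F(U)$ whose pullbacks lie in $F_\beta(V_i,\tV_i)$ already lies in $F_\beta(U,\tU)$.

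\emph{Functoriality.} Let $(g,\tg):(V,\tV)\to(U,\tU)$ be a morphism in $(X,\tX)_\tau$, let $a\in F_\beta(U,\tU)$, and let $(y,w,\epsilon_w)\in\Spa(V,\tV)$. Set $x=g(y)$, $v=w_{|\kappa(x)}$, and let $\epsilon_v$ be the composite $\Spec\cO_v\to\Spec\cO_w\to\tV\to\tU$. Then $(x,v,\epsilon_v)\in\Spa(U,\tU)$, so by hypothesis there is a finite tame extension $(L,w_L)/(\kappa(x),v)$ with $a_L\in F_{w_L}$. Since $V\to U$ is étale, $\kappa(y)/\kappa(x)$ is finite separable. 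I would choose a compositum $L'$ of $L$ and $\kappa(y)$ with a valuation $w'$ extending both $w_L$ and $w$. Tameness is stable under composita, so $(L',w')/(\kappa(y),w)$ is finite tame and $(L',w')/(L,w_L)$ is tame. By the compatibility assumption on $\beta$, $a_{L'}\in F_{w'}$. Since $\cO^h_{X,L'}$ does not depend on whether it is computed via $U$ or via $V$ (both are étale over $X$), we get $(g^*a)_{L'}=a_{L'}\in F_{w'}$.

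\emph{Descent of membership.} Suppose $a\in F(U)$ and $f_i^*a\in F_\beta(V_i,\tV_i)$ for all $i$. Let $(x,v,\epsilon_v)\in\Spa(U,\tU)$. By the definition of a tame covering there are an index $i$ and a point $(y,w,\epsilon_w)\in\Spa(V_i,\tV_i)$ over it with $w/v$ tame. Membership of $f_i^*a$ in $F_\beta(V_i,\tV_i)$ gives a finite tame $(L,w_L)/(\kappa(y),w)$ with $(f_i^*a)_L\in F_{w_L}$. Transitivity of tameness then makes $(L,w_L)/(\kappa(x),v)$ finite tame, and $a_L=(f_i^*a)_L$ as before, so $a\in F_\beta(U,\tU)$.

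\emph{Gluing and the main obstacle.} For gluing I would note that $\{V_i\to U\}$ is an étale covering. Given a point $x\in U$, take the trivial valuation on $\kappa(x)$ with $\epsilon$ the map $\Spec\kappa(x)\to U\to\tU$. The covering condition produces a $y$ over $x$, so the family $\{V_i\to U\}$ is jointly surjective. Sections $a_i\in F_\beta(V_i,\tV_i)$ that agree on the fibre products therefore glue to some $a\in F(U)$, and by the descent-of-membership step $a\in F_\beta(U,\tU)$. One subtlety is that fibre products in $(X,\tX)_\tau$ are of the form $(V_i\times_U V_j,\tV_i\times_{\tU}\tV_j)$. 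We only need agreement in $F(V_i\times_U V_j)$, and this follows because $F_\beta$ sits inside $F$. The step I expect to be the main obstacle is the valuation-theoretic input: finding common tame extensions in composita compatibly with the centers $\epsilon$, and identifying $\cO^h_{X,L}$ with the henselization computed via an étale $U$, which requires fixing the embedding $\kappa(y)\subseteq L$ consistently. I would handle this with standard facts on the tame ramification group, namely that tame extensions are exactly those split by the maximal tame extension of the strict henselization, and that this class is closed under composita and transitivity.
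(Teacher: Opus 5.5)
Your overall strategy is the right one: reduce to the \'etale sheaf property of $F$, using that membership in $F_\beta$ is stable under pullback and descends along tame covers. The descent-of-membership and gluing steps, including joint surjectivity via trivial valuations, are fine. The gap is in the functoriality step, at the claim that $(L',w')/(L,w_L)$ is tame. This extension is the base change along $L/\kappa(x)$ of $(\kappa(y),w)/(\kappa(x),v)$, and the latter need not be tame. Morphisms of the site only require $V\to U$ to be \'etale, and valuations centred on $\tU\setminus U$ can ramify wildly. For instance, with $\mathrm{char}(k)=p$, the map $(\A^1_k,\P^1_k)\to(\A^1_k,\P^1_k)$, $t\mapsto t^p-t$, is a morphism of the site, and the valuation at $\infty$ is totally wildly ramified in it. Taking $L=\kappa(x)$ gives $L'=\kappa(y)$, so $(L',w')/(L,w_L)$ is literally $(\kappa(y),w)/(\kappa(x),v)$. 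Since compatibility of $\beta$ is assumed in \ref{para:beta} only along tame extensions, you cannot conclude $a_{L'}\in F_{w'}$.

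No other choice of $L'$ helps. If $w/v$ is wild, every extension of $(\kappa(y),w)$ is wild over $(\kappa(x),v)$, since tameness passes to subextensions. By transitivity, such an extension is therefore never a tame extension of the tame $(L,w_L)$. In fact, with tame-only compatibility the presheaf property can fail. For $F=\cO_X$, put $F_w=\cO^h_{X,L}$ if $(L,w)$ is tame over the point of $\Spa(X,\tX)$ below it, and $F_w=0$ otherwise. This $\beta$ is compatible along tame extensions and $F_\beta(X,\tX)=\cO(X)$, but in the example above $1|_V\notin F_\beta(V,\tV)$.

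What is needed is compatibility of $\beta$ along arbitrary finite separable extensions $(L',w')/(L,w)$. This does hold for the families actually used in the paper, $\cO^h_{\tX,L,w}$ and $\G_a(r)_w$, because $w'_{|L}=w$. With that hypothesis your argument goes through once you delete the false claim. You only need that $(L',w')$ extends $(L,w_L)$ and that $(L',w')/(\kappa(y),w)$ is tame, and the latter is stability of tameness under base change.

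A minor slip in the same step: there is no map $\Spec\cO_v\to\Spec\cO_w$. You obtain $\epsilon_v$ by noting that $\tg\circ\epsilon_w$ maps the local ring of $\tU$ at its centre into $\cO_w\cap\kappa(x)=\cO_v$, so it factors through $\Spec\cO_w\to\Spec\cO_v$.
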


\begin{para}\label{para:Ot}
Taking $F=\sO_X$ and $\beta=\{F_w=\sO^h_{\tX,L,w}\}$, where $\sO^h_{\tX,L,w}:=\sO^h_{X,L}\times_L \sO_w$, 
we obtain the tame structure sheaf on $(X,\tX)_t$ 
denoted by $\sO^t$. By \cite[Lemma 8.5]{MRS-1} 
\eq{eq:Ot}{\sO^t(U,\tU)=\sO(\tU^{\rm int}),\quad \text{for all } (U,\tU)\in (X,\tX)_t,}
where $\tU^{\rm int}$ denotes the integral closure of $\tU$ in $U$.
\end{para}

\section{A comparison theorem with the cohomology of rigid analytic spaces}

Let $(A,I)$ be a universally adhesive pair in the sense of \cite[Definition \FKC{0}.8.5.4]{FK} with $A$ universally coherent 
\cite[Definition \FKC{0}.3.3.7]{FK}. For example, $A=\cO_K$ is the ring of integers of a complete non-archimedean field $K$ and $I=(a)$, where $a$ is a pseudo-uniformizer, or more generally if $A$ is an $a$-adically complete valuation ring and $I=(a)$ 
\cite[Corollaries \FKC{0}.9.2.7 and \FKC{0}.9.2.8]{FK}. In this section, we fix 
\begin{itemize}
    \item $\tS=\Spec(A)$
    \item $S=\Spec(A)\setminus V(I)$
    \item $\tX$ of finite presentation over $\tS$
    \item $X = \tX\times_{\tS} S$.
\end{itemize}
\begin{defn}\label{def;AdBl}
Let $U\subseteq X$ be a quasi-compact dense open immersion, and let $\cJ$ be a finitely generated quasi-coherent ideal in $\cO_{\tX}$
with vanishing locus the complement $\tX-U$. 
Since $\tX$ is of finite presentation and $A$ is universally coherent, $\cJ$ is of finite presentation. 
Let $\StX(U)$ be the category of $U$-admissible blowups of $\tX'\to \tX$, i.e. blowups 
$\rho_{\cA}: \Bl_{\cA}(\tX) \to \tX$ in a $U$-admissible ideal $\cA\subset\cO_{\tX}$, i.e. a quasi-coherent ideal sheaf of finite presentation $\cA\subset\cO_{\tX}$ containing a power of $\cJ$.
The morphisms in $\StX(U)$ are morphisms $h:\Bl_{\cA'}(\tX)\to \Bl_{\cA}(\tX)$, where $\cA'=\cA\cA''$ for some admissible ideal $\cA''\subset \cO_{\tX}$ and $h$ are induced by the universality of blowups. By construction, $\StX(U)$ is a cofiltered category. If $U=X$, we simply write $\StX$.

\end{defn}

\begin{para}\label{para:tame-qcoh}
Denote by  $\Sh((X,\tX)_t,\cO^t)$ the category of sheaves of $\cO^t$-modules, with $\cO^t$ as in \eqref{eq:Ot} and
by $\ShvQCoh{(X,\tX)}$ (resp. $\ShvCoh{(X,\tX)}$)  the subcategory  of sheaves $F$ such that for all 
$(U,\tY)\in (X,\tX)_t$ the sheaf $F_{(U,\tY)_{\et}}$ is a quasi-coherent (resp. coherent) $\cO_{\tY_{\et}}$-module, cf. Definition \ref{def;tametopology-intro}\ref{def;tametopology-intro5}.  
If $F\in \ShvQCoh{(X,\tX)}$, then for every \'etale morphism $e\colon \tY'\to \tY$, we have a canonical isomorphism\[
F(Y\times_{\tY}\tY',\tY')\cong (e^*F_{(Y,\tY)_\et})(\tY'),
\]
see  \cite[\href{https://stacks.math.columbia.edu/tag/03OJ}{Tag 03OJ}]{stacks-project}.
Thus by Theorem \ref{thm:tame-vs-etale-intro} there is a canonical isomorphism
\eq{eq:et-zar}{
	H^i((U,\tX)_t, F)\cong \colim_{\tY \in \StX(U)} H^i(\tY, F_{(U,\tY)_{\Zar}}), \quad i\ge 0.
	}

Let $f\colon \tY'\to \tY\in \StX(U)$. As $F\in \ShvQCoh{(X,\tX)}$ 
is invariant under quasi-modifications, see \cite[Lemma 2.7]{MRS-1}, we have an isomorphism 
$F_{(U,\tY)_{\et}}\xrightarrow{\simeq} f_*F_{(U,\tY')_{\et}}$, which is a morphism of $\cO_{\tY_{\et}}$-modules.
By adjunction we obtain induced morphisms of $\sO_{\tY'_{\et}}$-modules.
\eq{para:tame-qcoh1}{f^*: f^*F_{(U,\tY)_{\et}}\to F_{(U,\tY')_{\et}}.}
\end{para}

\begin{rmk}
If $A$ is not a Nagata ring, then $\cO^t_{(U,\tY)_{\Zar}}$ is in general not an $\cO_{\tY}$-module of 
finite presentation if $\tY \in \LtX(U)$.
\end{rmk}

\begin{para}\label{para:formal-completion}
Let $\fX$ be the formal completion of $\tX$ along $V(I)$ and let 
\eq{para:formal-completion1}{c^*\colon \mathbf{Mod}_{\tX}\to \mathbf{Mod}_{\fX}}
be the pullback of modules along the natural morphism  of locally ringed spaces $c\colon \fX\to \tX$.
Under the running assumption of this section the map $c$ is  flat and hence $c^*$ is exact, 
see \cite[Proposition \FKC{I}.1.4.7(2)]{FK}.
Following \cite[\FKC{I}.9.1(a)]{FK} we define the formal completion of the 
$\sO_{\tX}$-module $F_{(X,\tX)_{\Zar}}$ associated to $F\in \ShvQCoh{(X,\tX)}$, by
\[
F_{(X,\tX)}^{\rm for}:= c^* F_{(X,\tX)_{\Zar}}\in \mathbf{\Mod_{\fX}}. 
\]
If $F_{(X,\tX)_{\Zar}}$ is a finitely generated quasi-coherent sheaf, 
then we have an identification 
\[
F_{(X,\tX)_{\Zar}}^{\rm for}=\lim_n F_{(X,\tX)_{\Zar}}/I^n F_{(X,\tX)_{\Zar}},\]
where the right term is the usual completion, see \cite[Proposition \FKC{I}.1.4.7(1)]{FK}.
Let $f\colon \tY\to \tX$ be a proper finitely presented morphism of $\tS$-schemes of finite type and $Y= \tY\times_{\tS} S$. Let $\hat f:\fY\to \fX$ be the formal completion of $f$ along $V(I)$. 
By GFGA \cite[Theorem \FKC{I}.9.1.3]{FK}, for all $F\in \ShvCoh{(Y,\tY)}$ we have 
\eq{eq:GFGA}{
c^*Rf_*(F_{(Y,\tY)_{\Zar}}) \simeq R\hat f_*(F_{(Y,\tY)}^{\rm for}).
}
\end{para}
\begin{para}\label{para:RZ}
Let $\fX$ be the formal completion of $\tX$ along $V(I)$. 
Recall that an admissible ideal of $\fX$ in the sense of \cite[Definition \FKC{I}.3.7.4]{FK}
corresponds to an ideal of finite type in $\cO_{\fX}/I^n=\cO_{\tX}/I^n$, for some $n\ge 1$, by \cite[Corollary \FKC{I}.37.3]{FK}.
Hence an admissible ideal on $\fX$ is the same as an ideal in $\cO_{\tX}$ containing a power of $I$. 
Therefore, an admissible blowup of $\fX$ is by its definition, e.g. \cite[Definition \FKC{II}.1.1.1]{FK},
the same as the completion along $V(I)$ of a blowup of $\tX$ in an ideal containing a power of $I$.
Thus we can identify the Riemann--Zariski space $\RZ(\fX)$ of $\fX$ as defined in  \cite[\FKC{II}.3.1]{FK}
with the locally ringed space defined as the limit (as locally ringed space) of $(\fX',\cO_{\fX'})$ for $\tX'\in \StX$, where $\fX'$ is the formal completion of $\tX'$ along $V(I)$, that is as topological space
\[\RZ(\fX)=\lim_{\tX'\in \Sigma_{\tX}} \fX',\]
with (integral) structure sheaf given by
\[\cO_{\RZ(\fX)}^o= \colim_{\tX'\in \Sigma_{\tX}}p_{\fX'}^{-1}\cO_{\fX'},\]
where {$p_{\fX'}\colon \RZ(\fX)\to \fX'_{\zar}$} is the projection from the limit. 
The quasi-compact open subsets of $\RZ(\fX)$ form a basis of the topology and are of the form
$p_{\fX'}^{-1}(\fU')$, where $\fU'$ is the completion of a quasi-compact open $\tU'\subset \tX'$ along $\tU'\cap V(I)$
If $A= \cO_K$ with $K$ a complete non-archimedean field, then 
there is a canonical equivalence of ringed topoi  (e.g.  \cite[\FKC{II}.B.2.(e)]{FK})
\eq{eq2;FRZ}{ 
(\Sh(\fX^{\rig}), \sO_{\fXrig}^o)\simeq (\Sh(\RZ(\fX)), \cO^o_{\RZ(\fX)}),
}
where $\fXrig$ is the (classical) rigid analytic space associated to the formal model $\fX$ (e.g. \cite[7.4, Proposition 3]{Bosch-RigGeom})
and the sheaf of rings $\Oc_{\fXrig}$ on $\fXrig$ is given by 
\[\fXrig\supset \Sp(B)\mapsto \cO^o_{\fXrig}(\Sp(B))=\{f\in B|\; |f|_{\sup}\leq 1 \}\; ,\]
where $|-|_{\sup}$ is the supremum norm on the affinoid $K$-algebra $B$.
\end{para}

The following lemma will be used in section \ref{sec:4}.
\begin{lemma}\label{lem:Orig-flat-Ot}
Assume $A=\sO_K$ with $K$ a complete non-archimedean field.
Let $\tU=\Spec\tB\subset \tX$ be open and denote by $\fU$ the completion of $\tU$ along $V(I)$.
Set $U=\tU\times_{\tX} X$ and denote by $\tU^{\rm int}$ the integral closure of $\tU$ in $U$. 
Then the canonical  morphism  
$\cO(\tU^{\rm int})\to \cO^o_{\fXrig}(\fU^{\rig})$ is flat.

Moreover, in case the integral closure $\tX^{\rm int}$ of $\tX$ in $X$ is finite over $\tX$, the natural map
\eq{lem:0rig-flat-Ot0}{\cO_{\tilde{\fX}^{\int}}(\tilde{\fU}^{\int})\xrightarrow{\simeq} \cO^o_{\fXrig}(\fU^{\rig})}
is an isomorphism, where $\tilde{\fX}^{\rm int}$ is the completion of $\tX^{\rm int}$ along $V(I)$ 
and similarly with $\tilde{\fU}^{\int}$.
\end{lemma}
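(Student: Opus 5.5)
Write $\tB$ for the affine coordinate ring of $\tU$, and $\hat{\tB}$ for its $I$-adic completion, so $\fU=\Spf(\hat{\tB})$. Then $\fU^{\rig}=\Sp(\hat{\tB}[1/a])$, with $a$ a pseudo-uniformizer, and $\cO^o_{\fXrig}(\fU^{\rig})$ is the ring $\hat{\tB}[1/a]^o$ of power-bounded elements. Let $C$ be the integral closure of $\tB$ in $\tB[1/a]=\cO(U)$, so that $\cO(\tU^{\rm int})=C$.

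The plan is to factor the map in question as
\[C\to \hat C\to \hat{\tB}[1/a]^o,\]
where $\hat C$ denotes the $a$-adic completion of $C$.

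\textbf{Flatness of $C\to \hat C$.} Since $C$ is integral over the universally coherent, adhesive $\tB$ and is $a$-torsion free, I would write $C=\colim C_\lambda$ as a filtered colimit of finite $\tB$-subalgebras $C_\lambda$.
\begin{itemize}
\item Each $C_\lambda$ is finitely presented over $\tB$, since $\tB$ is universally coherent and $C_\lambda$ is torsion free.
\item Hence each $C_\lambda\to\hat C_\lambda$ is flat, by the flatness of completion for adhesive pairs \cite[\FKC{0}.8]{FK}.
\end{itemize}
I then want to pass to the colimit. The cleaner route is to use that the $a$-adic completion of an $a$-torsion-free module over an $a$-adically complete valuation-type base is flat whenever the ring is ``topologically universally adhesive'', which covers $C$ as an integral extension (FK 0.8.5/0.9.2). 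If that is not available directly, I would argue via the $a$-adic local criterion of flatness:
\begin{itemize}
\item $C/a^n=\hat C/a^n$ for all $n$;
\item both $C$ and $\hat C$ are $a$-torsion free;
\item $C[1/a]\to \hat C[1/a]$ is flat, by checking it on the $C_\lambda$ and taking the colimit.
\end{itemize}

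\textbf{Identification $\hat C\simeq\hat{\tB}[1/a]^o$.} Next I would show that $\hat{\tB}[1/a]^o$ is the integral closure of the image of $\hat{\tB}$ in $\hat{\tB}[1/a]$. This is the standard description of power-bounded elements of an affinoid algebra with a formal model, namely \cite[\FKC{II}]{FK} or Bosch's result that $B^o$ is the integral closure of any formal model ring.

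It then remains to check that completing commutes with taking the integral closure up to this comparison, i.e. that $\hat C\to \hat{\tB}[1/a]^o$ is an isomorphism.
\begin{itemize}
\item \emph{Finite case.} When $C$ is finite over $\tB$, this is exactly the second claim \eqref{lem:0rig-flat-Ot0}. It follows because a finite $\tB$-algebra has completion $\hat C=C\otimes_{\tB}\hat{\tB}$, which is integral over $\hat{\tB}$. Normality of $\hat C$ in $\hat C[1/a]$ then follows from excellence-free arguments: an element $f$ of $\hat{\tB}[1/a]$ integral over $\hat{\tB}$ can be approximated modulo $a^N$ by elements of $\tB[1/a]$ integral over $\tB$, using that integrality with bounded denominators is an open condition.
\item \emph{General case.} Here I would instead show that $\hat{\tB}[1/a]^o$ is flat over $C$. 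I would write it as a filtered colimit of the rings $\hat C_\lambda$, up to bounded $a$-torsion, and note that each $\hat C_\lambda$ is flat over $C_\lambda$. Flatness over $C=\colim C_\lambda$ then follows, since a filtered colimit of flat maps $C_\lambda\to M_\lambda$ compatible in $\lambda$ gives a flat $\colim C_\lambda\to\colim M_\lambda$.
\end{itemize}

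\textbf{Main obstacle.} The hard step is exactly this approximation: showing that every power-bounded element of $\hat{\tB}[1/a]$ is, modulo $a^N$ times $\hat{\tB}$, the image of an element of $C$. In other words, I need $\colim_\lambda \hat C_\lambda \to \hat{\tB}[1/a]^o$ to be surjective with controlled kernel. I would first try a Krasner-type argument: perturb the coefficients of an integral equation of $f$ to lie in $\tB$, which is $a$-adically dense in $\hat{\tB}$, and use the continuity of roots.
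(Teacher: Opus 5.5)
\textbf{There is a genuine gap.} Your general-case skeleton matches the paper's:
\begin{itemize}
\item write $C=\cO(\tU^{\rm int})$ as a filtered colimit of finite $\tB$-algebras $C_\lambda$;
\item use flatness of $C_\lambda\to\hat C_\lambda$;
\item pass to the colimit.
\end{itemize}
In the paper the $C_\lambda$ are the rings $\cO_{\tX'}(\tU')$ for $\tX'\in\StX$. These are finite over $\tB$ by Fujiwara--Kato, and $C=\colim_{\tX'}\cO_{\tX'}(\tU')$ by the tame/\'etale comparison. But the whole content of the lemma is the step you call the main obstacle: that $\colim_\lambda\hat C_\lambda=C\otimes_{\tB}\hat{\tB}$ maps \emph{onto} $\hat{\tB}[1/a]^o$. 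You do not prove it, and the same claim is hidden in your finite case under ``integrality with bounded denominators is an open condition''.

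The approximation itself is trivial. Write $f=g/a^N$ with $g\in\hat{\tB}$ and pick $g_0\in\tB$ with $g\equiv g_0$ mod $a^N\hat{\tB}$. Then $f=f_0+h$ with $f_0\in\tB[1/a]$, $h\in\hat{\tB}$, and $f_0$ is integral over $\hat{\tB}$. What is missing is that $f_0$ is integral over $\tB$, i.e.\ $\tB[1/a]\cap\hat{\tB}[1/a]^o=C$. A Krasner/continuity-of-roots argument does not give this. Perturbing an integral equation yields roots in some extension, not elements of $\tB[1/a]$, and it never compares integrality over $\hat{\tB}$ with integrality over $\tB$.

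The paper obtains $\cO^o_{\fXrig}(\fU^{\rig})=\colim_{\tX'}\widehat{\cO_{\tX'}(\tU')}$ from three inputs:
\begin{itemize}
\item Fujiwara--Kato's identification of the ringed topos $(\fXrig,\cO^o)$ with the Riemann--Zariski space carrying $\colim_{\tX'}p^{-1}_{\fX'}\cO_{\fX'}$;
\item the formal function theorem;
\item quasi-compactness and quasi-separatedness of $\RZ(\fX)$, to commute sections with the filtered colimit.
\end{itemize}
The finite case is then immediate, since $\Sigma_{\tX^{\rm int}}$ is cofinal and the system becomes constant.

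Alternatively, the descent can be done by hand, with $\tB$ replaced by its image in $\tB[1/a]$. Set $D=\tB[f_0]$. Flatness of $\tB\to\hat{\tB}$ gives $(D/\tB)\otimes_{\tB}\hat{\tB}\cong\hat{\tB}[f_0]/\hat{\tB}$, which is finitely generated. Since $D/\tB$ is $a^\infty$-torsion and $\tB/a^n=\hat{\tB}/a^n$, it is unchanged by $-\otimes_{\tB}\hat{\tB}$. Hence it is finitely generated and killed by some $a^m$, so it is finitely generated over $\tB/a^m$, and $D$ is finite over $\tB$. Combined with the fact you quote (that $\hat{\tB}[1/a]^o$ is the integral closure of $\hat{\tB}$), this yields $\hat{\tB}[1/a]^o\cong C\otimes_{\tB}\hat{\tB}$. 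Flatness is then just base change of $\tB\to\hat{\tB}$.

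\textbf{The detour through $\hat C$ is wrong in the generality of the lemma.} The factorization through $\hat C$ and the claimed flatness of $C\to\hat C$ fail, because no reducedness is assumed. Take $\tB=R[x]/(x^2)$:
\begin{itemize}
\item $C=R+Kx=\hat{\tB}[1/a]^o$;
\item $Kx\subset a^nC$ for all $n$, so $\hat C=R$;
\item $C\to\hat C$ is the quotient by a square-zero ideal $J\neq J^2$, hence not flat;
\item your third bullet fails, since $K[x]/(x^2)\to K$ is not flat.
\end{itemize}
In general $C/a^n=(\colim_\lambda\hat C_\lambda)/a^n$, so $\hat C$ is only the $a$-adic completion of $\colim_\lambda\hat C_\lambda$. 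It agrees with $\hat{\tB}[1/a]^o$ only when the latter is $a$-adically complete, which you could know only after the identification above. The appeal to $C$ being ``topologically universally adhesive'' is also unjustified, since $C$ is not finitely generated over $\tB$.
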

\begin{proof}
 By \eqref{eq:Ot} and Theorem \ref{thm:tame-vs-etale-intro},
\[\sO(\tU^{\rm int})=\sO^t(U, \tU)=\colim_{\tX'\in \Sigma_{\tX}} \sO_{\tX'}(\tU')\]
By \eqref{eq2;FRZ}, 
\[\colim_{\tX'\in \Sigma_{\tX}}p_{\fX'}^{-1} (\lim_n \sO_{\tX'}/I^n\sO_{\tX'})=\cO^o_{\fXrig},\]
where $p_{\fX'}^{-1}:\Sh(\fX'')\to \Sh(\RZ(\fX))\cong \Sh(\fXrig)$ is induced by the projection map.
By the Formal Function Theorem  \cite[Theorem \FKC{I}.9.2.1]{FK} we have for $\tX'\in \StX$
\eq{lem:Orig-flat-Ot1}{ \lim_n B_{\tU'}/I^n B_{\tU'} = H^0(\fU', \lim_n \sO_{\tX'}/I^n\sO_{\tX'})=
H^0(p_{\fX}^{-1}(\fU), p_{\fX'}^{-1}(\lim_n \sO_{\tX'}/I^n\sO_{\tX'})),}
where $\tU'=\tX'\times_{\tX} \tU$, $\fU'$ is its completion along $V(I)$,  and $B_{\tU'}= \sO_{\tX'}(\tU')$. 
By \cite[Theorem \FKC{I}.8.2.1]{FK}, $B_{\tU'}$ is a finite $\tB$-algebra, hence  $(B_{\tU'}, IB_{\tU'})$ is universally adhesive.
Thus, by \cite[Lemma \FKC{0}.8.2.18(2)]{FK}, the natural maps
\[B_{\tU'}\to \lim_n B_{\tU'}/I^n B_{\tU'} \]
are flat. Hence its colimit 
\[\sO(\tU^{\rm int})=\colim_{\tX\in \StX} B_{\tU'}\to \colim_{\tX'\in \StX}\left(\lim_n B_{\tU'}/I^n B_{\tU'}\right)\]
is flat as well, e.g. \cite[\href{https://stacks.math.columbia.edu/tag/05UU}{Tag 05UU}]{stacks-project}.
As $\RZ(\fX)$ is quasi-compact quasi-separated, see \cite[Theorem \FKC{II}.3.1.2]{FK}, we have
by \eqref{lem:Orig-flat-Ot1}, \eqref{eq2;FRZ}, and \cite[Corollary \FKC{0}.3.1.9]{FK}
\[\colim_{\tX'\in \StX}\left(\lim_n B_{\tU'}/I^n B_{\tU'}\right)
= H^0(p_{\fX}^{-1}(\fU), \colim_{\tX'\in \StX} p_{\fX'}^{-1}(\lim_n \sO_{\tX'}/I^n\sO_{\tX'}))
=\cO^o_{\fXrig}(\fU^{\rig}).\]
This completes the proof of the first part. 

Now assume that $\tX^{\int}$ is finite over $\tX$. In this case $\Sigma_{\tX^{\int}}$ is cofinal in $\Sigma_{\tX}$.
Moreover, for any $\tX'\in \Sigma_{\tX^{\int}}$ and $\tU'$ as above 
we have that $B_{\tU'}= \cO_{\tX^{\int}}(\tU^{\int})$, where $\tU^\int$ is in this case also equal to $\tX^{\int}\times_{\tX} \tU$. 
Thus the last displayed formula above yields 
\[\lim_n \cO_{\tX^{\int}}(\tU^{\int})/I^n = \cO^o_{\cX^{\rig}}(\fU^{\rig}),\] 
which yields the second statement.
\end{proof}

\begin{para}\label{para:tame-rig-sheaf}
We  associate to $F\in \ShvQCoh{(X,\tX)}$ a sheaf on $\RZ(\fX)$ as follows: 
 For all $\tX'\in \StX$ consider the formal completion $\fX'$ of $\tX'$ along $V(I)$ and the projection 
 $p_{\fX'}:\RZ(\fX)\to \fX'_{\zar}$. We get a filtered direct system on $\RZ(\fX')$
\eq{eq;FRZ}{\{p_{\fX'}^*F_{(X,\tX')}^{\rm for}\}_{\tX'\in \Sigma_{\tX},}
}
{where a morphism  $g\colon \tX''\to \tX'$  in $\StX$ gives rise to a commutative diagram
\[\xymatrix{
\RZ(\fX)\ar[r]^-{p_{\fX''}}\ar[dr]_{p_{\fX'}} & \fX''\ar[d]^{\hat{g}}\ar[r]^{c_{\tX''}}  & \tX''\ar[d]^g\\ 
                                             & \fX'\ar[r]^{c_{\tX'}}                   &\tX',
}\]
inducing the transition map 
\[
p_{\fX'}^*F_{(X,\tX')}^{\rm for}=
p_{\fX''}^*c_{\tX''}^*g^*F_{(X,\tX')}\xrightarrow{g^*} p_{\fX''}^*c_{\tX''}^*F_{(X,\tX'')}=p_{\fX''}^*F_{(X,\tX'')}^{\rm for}. 
\]
We define 
\eq{eq:rig1}{ F_{\RZ(\fX)}:=\colim_{\tX'\in \Sigma_{\tX}} p_{\fX'}^{*} F_{(X,\tX')}^{\rm for}\in \Shv(\RZ(\fX),\Oc_{\RZ(\fX)}).}}
In case $A=\sO_K$ with $K$ a complete non-archimedean field we obtain by \eqref{eq2;FRZ} 
a corresponding sheaf of $\cO^o_{\fXrig}$-modules on the rigid analytic space $\fXrig$ denoted by 
\eq{eq:rig2}{F_{\fXrig}\in \Shv(\fXrig,\cO^o_{\fXrig}).}

\begin{lemma}\label{lem:sectionsFrig}
Assume $A=\sO_K$ with $K$ a complete non-archimedean field.
Let $F\in \ShvQCoh{(X,\tX)}$.
Then $F_{\fXrig}$ is the sheaf associated to the presheaf 
\[ \fU^{\rig}\mapsto  F(U, \tU)\otimes_{\cO(\tU^{\rm int})} \cO^o_{\fX^{\rig}}(\fU^{\rig}),\]
defined on the basis of the topology consisting of 
all admissible opens of the form  $\fU^{\rig}$ with $\fU$ the completion along $V(I)$ of an affine open $\tU\subset \tX'$
with $\tX'\in \Sigma_{\tX}$, and where $U:=\tU\times_{\tS} S$ and $\tU^{\rm int}$ denotes the integral closure of $\tU$ in $U$.
\end{lemma}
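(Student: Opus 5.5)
Since the opens $p_{\fX'}^{-1}(\fU')$ (equivalently the admissible opens $\fU^{\rig}$ listed in the statement) form a basis of quasi-compact opens of $\RZ(\fX)$, it suffices to compute the sections of the presheaf colimit defining $F_{\RZ(\fX)}$ in \eqref{eq:rig1} on these basic opens. We then compare the result with the presheaf in the statement. Sheafification commutes with the filtered colimit and with pullback along the projections $p_{\fX'}$. Hence $F_{\RZ(\fX)}$ is the sheafification of the presheaf
\[
p_{\fX}^{-1}(\fU)\;\mapsto\;\colim_{\tX''}\Big(\big(p^{-1}_{\fX''}F^{\rm for}_{(X,\tX'')}\big)\otimes_{p^{-1}_{\fX''}\sO_{\fX''}}\cO^o_{\RZ(\fX)}\Big)^{\rm pre}(p_{\fX}^{-1}(\fU)).
\]
Here the index runs over $\tX''$ dominating the given $\tX'$, and $\tU''=\tX''\times_{\tX'}\tU$.

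Step one is to identify the individual terms. On the formal affine open $\fU''$ of $\fX''$, quasi-coherence of $F_{(X,\tX'')_{\Zar}}$ gives
\[
c^*F_{(X,\tX'')}(\fU'') = F(U,\tU'')\otimes_{B_{\tU''}}\widehat{B_{\tU''}},
\]
with the hat denoting $I$-adic completion. For general $\tU''$, which is only quasi-compact, this holds after passing to affine covers. The key observation is then that $F(U,\tU'')=F(U,\tU)$ by invariance under quasi-modifications \cite[Lemma 2.7]{MRS-1}. Likewise $\colim B_{\tU''}=\cO(\tU^{\rm int})$ by \eqref{eq:Ot} and Theorem \ref{thm:tame-vs-etale-intro}.

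Step two is to rewrite each term using $\cO(\tU^{\rm int})$. The module structure on $F(U,\tU)$ over $B_{\tU''}$ factors through $\cO^t(U,\tU)=\cO(\tU^{\rm int})$, because $F$ is an $\cO^t$-module. So each term of the colimit can be written as
\[
F(U,\tU)\otimes_{\cO(\tU^{\rm int})}\big(\cO(\tU^{\rm int})\otimes_{B_{\tU''}}\widehat{B_{\tU''}}\big).
\]
Taking the filtered colimit over $\tX''$ and using that tensor products commute with colimits gives
\[
F(U,\tU)\otimes_{\cO(\tU^{\rm int})}\colim_{\tX''}\widehat{B_{\tU''}}.
\]
The colimit $\colim_{\tX''}\widehat{B_{\tU''}}$ was identified with $\cO^o_{\fXrig}(\fU^{\rig})$ in the proof of Lemma \ref{lem:Orig-flat-Ot}. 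The term $\cO(\tU^{\rm int})\otimes_{B_{\tU''}}(-)$ is absorbed in the colimit, since $\cO(\tU^{\rm int})=\colim B_{\tU''}$.

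The main obstacle is making this factorization honest. The subtlety is that $F_{(X,\tX'')_{\Zar}}$ is quasi-coherent over $\sO_{\tX''}$ but not obviously over the integral closure, so we must check that the tensor identities are compatible with the transition maps \eqref{para:tame-qcoh1}. I would handle this at the level of presheaves on the basis before sheafifying. The transition maps $g^*$ are then just base change from $B_{\tU'}$ to $B_{\tU''}$ of the fixed module $F(U,\tU)$. Finally, by quasi-compactness of $\RZ(\fX)$ and of the basic opens \cite[Corollary \FKC{0}.3.1.9]{FK}, the colimit of sections agrees with sections of the colimit on the basis. The two presheaves therefore coincide on a basis, and so have the same associated sheaf, which transfers to $\fXrig$ via \eqref{eq2;FRZ}.
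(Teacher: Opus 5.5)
Your proposal is correct and follows essentially the same route as the paper: you compute at the presheaf level on the basis $p_{\fX}^{-1}(\fU)$, and you use the same three inputs, namely $F(U,\tU'')\cong F(U,\tU)$ by invariance under quasi-modifications, $\colim B_{\tU''}=\cO(\tU^{\rm int})$, and $\colim \widehat{B_{\tU''}}=\cO^o_{\fXrig}(\fU^{\rig})$. The one step you leave tacit is dropping the factor $\otimes_{p^{-1}\cO_{\fX''}}\cO^o_{\RZ(\fX)}$ from your displayed presheaf, since your Step one only computes $p^{-1}F^{\rm for}$. The paper justifies this by \cite[Lemma \FKC{0}.4.2.7]{FK}, and it also follows from the same cofinality argument you use to absorb $\cO(\tU^{\rm int})\otimes_{B_{\tU''}}(-)$ into the colimit.
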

\begin{proof}
By \cite[Lemma \FKC{0}.4.2.7]{FK} we have the following equality in $\Shv(\RZ(\fX),\cO_{\RZ(\fX)})$
\[\colim_{\tX'\in \Sigma_{\tX}} p_{\fX'}^{*} F_{(X,\tX')}^{\rm for}=
\colim_{\tX'\in \Sigma_{\tX}} p_{\fX'}^{-1} F_{(X,\tX')}^{\rm for}.\]
Therefore, $F_{\RZ(\fX)}$ is the sheaf associated to the  presheaf on the basis of the topology
$\{\RZ(\fU)\}_{\tU\subset \tX'}$, where  $\tU$ runs over all affine open subsets of $\tX'\in \StX$ and $\fU$ 
is its completion along $V(I)$,
\begin{align*}
\RZ(\fU)\mapsto &  
\colim_{\tX''\in \Sigma_{\tX'}} 
p_{\fX''}^{-1} \left(c_{\tX''}^{-1}F_{(X,\tX'')}\otimes_{c_{\tX''}^{-1}\cO_{\tX''}}\cO_{\fX''}\right)(p^{-1}_{\fX'}(\fU))      \\
=& \colim_{\tX''\in \Sigma_{\tX'}} \left(
F(\tU''\times_{\tX''} X, \tU'')\otimes_{\sO(\tU'')} (\lim_n\sO(\tU'')/I^n\sO(\tU''))\right),
\end{align*}
where $\tU''=\tX''\times_{\tX'} \tU$ and the inverse images and tensor products above are formed in a presheaf sense.
As $\tU''\to \tU$ is a modification we have an isomorphism 
\[F(\tU''\times_{\tX''} X, \tU'')\cong F(U, \tU)\] 
of $\sO^t(U, \tU)$-modules, where $U=\tU\times_{\tX'} X$.
By \eqref{eq:Ot} and Theorem \ref{thm:tame-vs-etale-intro},
\[\colim_{\tX''\in \Sigma_{\tX'}} \sO(\tU'')=\sO^t(U, \tU)=\sO(\tU^{\rm int}).\]
Moreover, by \eqref{eq2;FRZ} and the fact that composition with $\tX'\to\tX$ induces a cofinal functor $\Sigma_{\tX'}\to \StX$
we get 
\[\colim_{\tX''\in \Sigma_{\tX'}}p_{\fX''}^{-1} (\lim_n \sO_{\tX''}/I^n\sO_{\tX''})=\cO^o_{\fXrig},\]
where $p_{\fX''}^{-1}:\Sh(\fX'')\to \Sh(\RZ(\fX))\cong \Sh(\fXrig)$ is induced by the projection map.
In view of  \eqref{eq2;FRZ} this yields the statement.
\end{proof}

\end{para}

\begin{thm}\label{thm;comparison}
Assume $A$ is $I$-adically complete and $\tX$ is a proper $\tS$-scheme of finite presentation. 
Denote by $\fX$ the completion of $\tX$ along $V(I)$. 
For $F\in \ShvCoh{(X,\tX)}$, we have
\[
R\Gamma((X,\tX)_t,F)\simeq R\Gamma(\RZ(\fX),F_{\RZ(\fX)}). 
\]
If  $\tS=\Spec \sO_K$ with $K$ a complete non-archimedean field, then
\[
R\Gamma((X,\tX)_t,F)\simeq R\Gamma(\fXrig,F_{\fXrig}). 
\]
\end{thm}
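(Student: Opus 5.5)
Using \eqref{eq:et-zar} with $U=X$, we will write the left side as a filtered colimit of Zariski cohomologies over admissible blowups:
\[
H^i((X,\tX)_t,F)\cong \colim_{\tX'\in\StX} H^i(\tX', F_{(X,\tX')_{\Zar}}).
\]
Each $\tX'\in\StX$ is again proper and finitely presented over $\tS$, since blowups in finitely presented ideals are projective and finitely presented. Because $F\in\ShvCoh{(X,\tX)}$, the sheaf $F_{(X,\tX')_{\Zar}}$ is coherent. Applying GFGA \eqref{eq:GFGA} to the structure morphism $\tX'\to\tS$, and using that $A$ is $I$-adically complete so that the completion of $\tS$ is $\Spf A$ with the same global sections, we get
\[
R\Gamma(\tX',F_{(X,\tX')_{\Zar}})\simeq R\Gamma(\fX',F^{\rm for}_{(X,\tX')}).
\]
More precisely, $c^*$ on $\Spec A$ amounts to completion, and a coherent complex over $A$ is already complete; alternatively, one can invoke the comparison theorem for proper morphisms of \cite[\FKC{I}.9]{FK} directly.

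Next we check that these isomorphisms are compatible with the transition maps. For $g\colon\tX''\to\tX'$ in $\StX$, the map on the scheme side is $H^i(\tX',F_{(X,\tX')})\to H^i(\tX'',g^*F_{(X,\tX')})\to H^i(\tX'',F_{(X,\tX'')})$, using \eqref{para:tame-qcoh1}. On the formal side it is the corresponding pullback followed by $\hat g^*$ of \eqref{para:tame-qcoh1}. Naturality of $c^*$ and of the GFGA isomorphism with respect to pullback along proper maps should give a commutative ladder. We then obtain
\[
H^i((X,\tX)_t,F)\cong\colim_{\tX'\in\StX}H^i(\fX',F^{\rm for}_{(X,\tX')}).
\]

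Finally, we identify the right-hand side with $H^i(\RZ(\fX),F_{\RZ(\fX)})$. Here $\RZ(\fX)=\lim\fX'$ is a cofiltered limit of coherent (qcqs, spectral) topological spaces with quasi-compact transition maps, and $F_{\RZ(\fX)}=\colim p_{\fX'}^{-1}F^{\rm for}_{(X,\tX')}$; the equality of $p^*$ and $p^{-1}$ colimits is \cite[Lemma \FKC{0}.4.2.7]{FK}, exactly as in the proof of Lemma \ref{lem:sectionsFrig}. The standard result on cohomology of limits of coherent spaces, \cite[Proposition \FKC{0}.3.1.19]{FK} (or SGA4 VI 8.7.3), gives
\[
H^i(\RZ(\fX),\colim p_{\fX'}^{-1}G_{\fX'})\cong\colim H^i(\fX',G_{\fX'})
\]
for such a compatible system. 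This proves the first equivalence. The rigid version follows from the equivalence of topoi \eqref{eq2;FRZ} together with the definition \eqref{eq:rig2} of $F_{\fXrig}$.

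The main obstacle is the first step, and it has two parts. First, we must make sure that $F_{(X,\tX')_{\Zar}}$ is coherent on every blowup $\tX'$, not just on $\tX$, so that GFGA applies uniformly. This follows from the definition of $\ShvCoh{(X,\tX)}$, which requires coherence for all $(U,\tY)\in(X,\tX)_t$. Second, we must check the compatibility of the GFGA isomorphisms with the pullback maps \eqref{para:tame-qcoh1}; this is the point where I expect the most care will be needed. The passage to the limit in the last step is formal once the coherence and quasi-compactness hypotheses of \cite{FK} are checked.
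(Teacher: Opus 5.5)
Your proposal is correct and follows essentially the same route as the paper: rewrite tame cohomology as a colimit of Zariski cohomology over admissible blowups via \eqref{eq:et-zar}, pass to formal completions by GFGA \eqref{eq:GFGA} using $I$-adic completeness of $A$, identify the resulting colimit with cohomology on $\RZ(\fX)$ by the limit theorem for coherent spaces, and transport to $\fXrig$ via \eqref{eq2;FRZ}. The only difference is bibliographic: the paper cites \cite[Proposition \FKC{0}.4.4.1]{FK} for the last step. Your remarks on coherence on every blowup, compatibility with the transition maps, and $p^*$ versus $p^{-1}$ spell out points the paper leaves implicit.
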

 \begin{proof}
By \eqref{eq2;FRZ} the second statement follows directly from the first. 
The first statement holds by the following sequence of isomorphisms
 \[\begin{aligned}
 R\Gamma((X,\tX)_t,F)& \overset{(*1)}{\simeq}
 \colim_{\tY\in \StX} R\Gamma(\tY,F_{(X,\tY)_{\Zar}})\\
 &
 \overset{(*2)}{\simeq} \colim_{\tY\in \SX} R\Gamma(\fY,F_{(X,\tY)}^{\rm for}) 
 \overset{(*3)}{\simeq} R\Gamma(\RZ(\fX),F_{\RZ(\fX)}),\\
 \end{aligned}\]
where $(*1)$ follows from \eqref{eq:et-zar}, $(*2)$ from \eqref{eq:GFGA} and the fact that $A$ is $I$-adically complete, 
and $(*3)$ from \cite[Proposition \FKC{0}.4.4.1]{FK}.
\end{proof}

\begin{defn}\label{defn:tb}
    We say that $F\in \Shv{((X,\tX)_t)}$ is \emph{tamely birational} if for every object $(X',\tX')\in (X,\tX)_t$ and all $U\subseteq X'$ open dense, the restriction map induces an isomorphism $F(X',\tX')\cong F(U,\tX')$. In particular, the Zariski sheaves $F_{(U,\tX')_{\Zar}}$ and $F_{(X',\tX')_{\rm Zar}}$ coincide on $\tX'_{\Zar}$. 
\end{defn}
\begin{ex}
   When $X$ is normal, the sheaves $\cO^t$ and $\Gar$ of Definition \ref{def;Gar} are tamely birational, by \eqref{eq:Ot} and Lemma \ref{lem:Gar-shift}\ref{lem:Gar-shift1}, respectively.
\end{ex}

\begin{thm}\label{thm:colim-smooth-hironaka}
     Let $A$ be integral and set $K=\Frac(A)$. Assume $A$ is $I$-adically complete,  
    $S = \Spec(K)$\footnote{For example, if $I=(a)$ and $K=A[1/a]$.},  
  and $\tX\to \Spec A$ is proper of finite presentation. 
    Assume either $\ch(K)=0$ or $\ch(K)=p>0$ and $\dim X\le 3$. 
    Let $U\subseteq X$ be a dense open such that $X$ is regular in a neighborhood of $X-U$. 
    Let $F\in \ShvCoh{(X,\tX)}$ be tamely birational. Then we have\[
    R\Gamma((U,\tX)_t,F)\simeq \varinjlim_{\tX'\in\StX'(U)} R\Gamma(\RZ(\fX'),F_{\RZ(\fX')}),
    \]
    where $\StX'(U)\subseteq \StX$ is the subcategory of $U$-admissible blowups $\tX'\to \tX$ in a center $\tT$ such that $T:=\tT\times_{\tS} S$ is regular.
\end{thm}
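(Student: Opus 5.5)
The plan is to compare both sides through the Zariski description of tame cohomology and then reorganize the colimit. By \eqref{eq:et-zar}, applied to $U\subseteq X$ (note $F\in\ShvCoh{(X,\tX)}$ is in particular quasi-coherent), we have
\[R\Gamma((U,\tX)_t,F)\simeq \colim_{\tY\in \StX(U)} R\Gamma(\tY,F_{(U,\tY)_{\Zar}}).\]
For each $\tY\in\StX(U)$, write $Y:=\tY\times_{\tS}S$. Since $F$ is tamely birational and $U\subseteq Y$ is dense, we get $F_{(U,\tY)_{\Zar}}=F_{(Y,\tY)_{\Zar}}$. Hence every term is the cohomology of the coherent sheaf attached to the pair $(Y,\tY)$, which lies in $(X,\tX)_t$ because $Y\to X$ is proper birational and an isomorphism over $U$. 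On the other side, for $\tX'\in\StX'(U)$ with generic fiber $X'$, Theorem \ref{thm;comparison}, applied to the proper pair $(X',\tX')$ and the restriction of $F$, gives
\[R\Gamma(\RZ(\fX'),F_{\RZ(\fX')})\simeq\colim_{\tY\in\Sigma_{\tX'}}R\Gamma(\tY,F_{(X',\tY)_{\Zar}}).\]
Here $\Sigma_{\tX'}$ consists of the $X'$-admissible blowups, i.e.\ those that are isomorphisms on generic fibers. By tame birationality again, $F_{(X',\tY)_{\Zar}}=F_{(U,\tY)_{\Zar}}$.

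The right-hand side of the theorem is therefore a double colimit over the category of pairs $(\tX'\in\StX'(U),\ \tY\in\Sigma_{\tX'})$. There is an obvious functor from this category to $\StX(U)$, sending $(\tX',\tY)$ to the composite $\tY\to\tX'\to\tX$. Strictly speaking, the composite is a modification of $\tX$ that is an isomorphism over $U$, and quasi-modifications do not change the tame sections, so we may replace it by a dominating $U$-admissible blowup. It then remains to prove that this functor is cofinal. Given the cofilteredness of all the index categories, cofinality comes down to the following statement: every $\tY\in\StX(U)$ is dominated by some $\tY'\in\Sigma_{\tX'}$ with $\tX'\in\StX'(U)$.

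This domination is the main obstacle, and I would attack it via resolution on the generic fiber. The generic fiber $Y\to X$ is the blowup of $X$ in an ideal cosupported in $X-U$. Since $X$ is regular near $X-U$, principalization of this ideal (Hironaka if $\ch(K)=0$, Cossart--Piltant if $\dim X\le 3$) yields a sequence of blowups $X_n\to\cdots\to X_0=X$ in regular centers lying over $X-U$ such that the ideal becomes invertible, so that $X_n\to X$ factors through $Y$. I then spread each center out by taking schematic closures $\tT_i\subset\tX_i$ and setting $\tX_{i+1}=\Bl_{\tT_i}\tX_i$; the generic fibers are unchanged and the centers are $U$-admissible.

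A technical point is that $\StX'(U)$ is defined via a single blowup with regular generic center. I would handle this either by showing that the composite $\tX_n\to\tX$ is dominated by (or equals, up to an $X_n$-admissible blowup) an element of $\StX'(U)$, or by checking that the proof works verbatim with iterated such blowups. Finally, I take $\tY'$ to be the closure of the graph, or equivalently the blowup of $\tX_n$ in the pulled-back ideal of $\tY$ multiplied by an ideal supported in $V(I)$. This $\tY'$ is an $X_n$-admissible blowup of $\tX_n$ dominating $\tY$, which gives cofinality and hence the claimed equivalence.
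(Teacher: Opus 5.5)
Your proposal follows the paper's proof: you use \eqref{eq:et-zar}, tame birationality and Theorem \ref{thm;comparison} to rewrite the left side as a double colimit, and then prove cofinality by principalizing on the generic fiber and blowing up the closures of the regular centers. The only differences are cosmetic: you principalize the ideal of $\tY$ directly and dominate $\tY$ by an $X_n$-admissible modification of $\tX_n$, whereas the paper first passes through $\Bl_{\cJ}(\tX)$ with $\cJ$ the ideal of the closure of the generic center; and the paper, as you suggest, also tacitly counts iterated blowups $\tX_{r+1}\to\tX_r\to\cdots\to\tX$ as members of $\Sigma'_{\tX}(U)$.
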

\begin{proof}
By \eqref{eq:et-zar}, \[
R\Gamma((U,\tX)_t,F) \simeq \colim_{\tZ\in \StX(U)} R\Gamma(\tZ,F_{(U,\tZ)_{\Zar}}).
\]
We can rewrite the right hand side as \[
\colim_{\tX'\to \tX}\colim_{\tY\in\St(\tX')}  R\Gamma(\tY,F_{(U,\tY)_{\Zar}}),
\]
where $\tX' \to \tX$ ranges over the blowups in centers contained in the closure of $X\setminus U$ in $\tX$.
Indeed, if $\tZ\to \tX$ is an $U$-admissible blowup in the ideal sheaf $\cI\subset\cO_{\tX}$, then 
we get a canonical  $\tX$-morphism $\tY:={\rm Bl}_{\cI \cJ}(\tX)\to \tZ$, where 
$\cJ=\Ker(\cO_{\tX}\to j_*\cO_X\to j_*\cO_X/\cI_{|X})$ is the ideal of the scheme-theoretic closure of $V(\cI_{|X})$, 
and $\tY\to\tX$ is the composition 
$\tY\xrightarrow{q}\tX'={\rm Bl}_{\cJ}\tX\to\tX$, where $q$ is the blowup in $\cI\cO_{\tX'}$, which over
$X$ is invertible and hence $(\tY'\to\tX')\in \Sigma_{\tX'}$.

Note $F_{(U,\tY)_{\Zar}} = F_{(X',\tY)_\Zar}$ by the hypothesis that $F$ is tamely birational,
where $X':= \tX'\times_{\tX} X$. 
Hence, combining this with Theorem \ref{thm;comparison}, we get an equivalence
\[
R\Gamma((U,\tX)_t,F) \simeq \varinjlim_{\tX'\to \tX} R\Gamma(\RZ(\fX'),F_{\RZ(\fX')}).
\]
Fix $\tX'=\bu_{\cJ}(\tX)$, where $\cJ\subset \cO_{\tX}$ is 
a finitely presented quasi-coherent sheaf of ideals by the assumption that $\tX$ is of finite presentation over the  universally coherent ring $A$. 
Since $X$ is regular in a neighborhood of $X\setminus U$, there exists a sequence of $U$-admissible blowups $X_r\to  \cdots \to X_1\to X$ 
whose centers $C_i\subset X_i$ are regular such that $\cJ\cO_{X_r}$ is invertible.
In case ${\rm char}(K)=0$, this holds by  Hironaka's principalization, \cite[Corollary 1]{Hironaka};
in case ${\rm char}(K)=p>0$ and $\dim X\le 3$, this holds by \cite[Proposition 4.1]{CossartPiltant-ResI}.

We extend the sequence to $\tX_r\to \cdots \to \tX_1\to \tX$,
where $\tX_{i+1}\to \tX_i$ is the blowup of $\tX_i$ in the closure of $C_i$.
Let $\tX_{r+1}\to \tX_{r}$ be the blowup in $\cJ\cO_{\tX_r}$; it is an isomorphism over $X_r$.
Thus $\tX_{r+1}\to\tX_{r}\to \tX\in \Sigma'_{\tX}(U)$.
Moreover, as $\cJ\cO_{\tX_{r+1}}$ is invertible  the composition $\tX_{r+1}\to \tX$ factors via $\tX' \to\tX$,
by the universal property of the blowup $\tX'={\rm Bl}_{\cJ}(\tX)$. Hence 
$\Sigma'_{\tX}(U)$ is cofinal in the category of all blowups of $\tX$ with center contained in the closure of $X\setminus U$ in $\tX$.
\end{proof}

Using a notion of almost coherence introduced by Bogdan Zavyalov in \cite{Zavyalov} 
we can relax in the arguments above the condition that $F_{(X,\tY)}$ 
is a coherent $\sO_{\tY_{\et}}$-module in certain situations considerably. 
This will be useful in section \ref{sec:4} when dealing with algebraically closed non-archimedean fields.
Here is the general setup.

\begin{para}\label{para:ac}
Let $(R,I,\fm)$ be a triple satisfying the hypotheses of \cite[Set-up 4.5.1]{Zavyalov}, i.e., $I\subseteq \fm$ are ideals of a commutative ring $R$ such that
\begin{itemize}
    \item $I$ is finitely generated, 
    \item $R$ is $I$-adically complete, $I$-torsionfree and $I$-adically topologically universally adhesive,
    \item $\fm^2 = \fm$ and $\fm\otimes_R \fm$ is $R$-flat.
\end{itemize}
For example, $R=\cO_K$ is the ring of integers of an algebraically closed field complete with respect to a non-archimedean absolute value, $I=(a)$, and $\fm = \cup_{m >0} (a^{1/m})$, for some pseudo-uniformizer $a\in R$. 

In this setting, we let $A$ be a topologically finitely presented $R$-algebra, and as before we fix:
\begin{itemize}
    \item $\tS=\Spec(A)$,
    \item $S=\Spec(A)\setminus V(IA)$,
    \item $\tX$ of finite presentation over $\tS$,
    \item $X = \tX\times_{\tS} S$.
\end{itemize}
We consider the category $\ShvaCoh{(X,\tX)}$ of sheaves $F$ on $(X,\tX)_t$
such that for all $(U,\tY)\in (X,\tX)_t$ the sheaf $F_{(U,\tY)_{\et}}$ is a quasi-coherent $\cO_{\tY_{\et}}$-module 
and its restriction to the Zariski site $F_{(U,\tY)_{\Zar}}$ is almost coherent in the sense of \cite[Definition 4.1.10]{Zavyalov}. 
By the formal function theorem  for quasi-coherent almost coherent sheaves \cite[Theorem 5.4.7]{Zavyalov}, 
we have an equivalence
\eq{para:ac1}{
R\Gamma(\tX, F_{(X,\tX)_{\Zar}})\otimes_A \hat{A} \simeq R\Gamma(\fX, F_{(X,\tX)}^{\rm for}), \quad F\in \ShvaCoh{(X,\tX)},
}
where $\hat{A}$ denotes the $I$-adic-completion  of $A$, which is a flat $A$-module, by \cite[Proposition \FKC{0}.8.2.18]{FK}.
Using this equivalence instead of \eqref{eq:GFGA} the following theorem is proven analogously to Theorem \ref{thm;comparison}.
\begin{thm}\label{thm:almost-comparison}
Assume $A$ is $I$-adically complete and $\tX$ is a proper $\tS$-scheme of finite presentation.
Denote by $\fX$ the completion of $\tX$ along $V(I)$. For $F\in \ShvaCoh{(X,\tX)}$ we have
\[
R\Gamma((X,\tX)_t,F)\simeq R\Gamma(\RZ(\fX), F_{\RZ(\fX)}).
\]
If $\tS=\Spec \cO_K$ with $K$ an algebraically closed complete non-archimedean field, then
\[R\Gamma((X,\tX)_t, F)\simeq R\Gamma(\fXrig, F_{\fXrig}).\]
\end{thm}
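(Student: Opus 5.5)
The proof will follow the same chain of equivalences as in the proof of Theorem \ref{thm;comparison}, with the coherent GFGA \eqref{eq:GFGA} replaced by Zavyalov's formal function theorem in the form \eqref{para:ac1}. First, every $F\in\ShvaCoh{(X,\tX)}$ has quasi-coherent restrictions $F_{(U,\tY)_{\et}}$. Hence \eqref{eq:et-zar} applies, since it only needs quasi-coherence together with Theorem \ref{thm:tame-vs-etale-intro}. We should check hypothesis (p) of that theorem. Because $F$ is an $\cO^t$-module and $X$ lives over $S$, where $I$ is invertible, the relevant local sections are modules over local rings of $\tY$. If the residue characteristic is $p_x$, these local rings are $\Z_{(p_x)}$-algebras, so (p) holds. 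This gives
\[R\Gamma((X,\tX)_t,F)\simeq \colim_{\tY\in\StX} R\Gamma(\tY,F_{(X,\tY)_{\Zar}}).\]

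Second, for each admissible blowup $\tY\to\tX$, the scheme $\tY$ is again proper and of finite presentation over $\tS$, and $F_{(X,\tY)_{\Zar}}$ is almost coherent by assumption. Since $A$ is $I$-adically complete, $\hat A=A$, and \eqref{para:ac1} applied to $\tY$ yields $R\Gamma(\tY,F_{(X,\tY)_{\Zar}})\simeq R\Gamma(\fY,F^{\rm for}_{(X,\tY)})$. These equivalences are natural in $\tY$, compatible with the transition maps \eqref{para:tame-qcoh1} and with $\hat g$ on completions. Naturality should follow from the functoriality of Zavyalov's comparison map, which is the adjunction map $R\Gamma(\tY,-)\to R\Gamma(\fY,c^*-)$.

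Third, we pass to the colimit. \cite[Proposition \FKC{0}.4.4.1]{FK} identifies $\colim_{\tY}R\Gamma(\fY,F^{\rm for}_{(X,\tY)})$ with $R\Gamma(\RZ(\fX),\colim_{\tY} p_{\fY}^{*}F^{\rm for}_{(X,\tY)})$. This identification uses that $\RZ(\fX)=\lim\fY$ is a cofiltered limit of coherent spaces with quasi-compact transition maps. It also uses that $p^*$ and $p^{-1}$ agree in the colimit, by \cite[Lemma \FKC{0}.4.2.7]{FK}. By \eqref{eq:rig1}, the right-hand side is $R\Gamma(\RZ(\fX),F_{\RZ(\fX)})$. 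For $\tS=\Spec\cO_K$ with $K$ algebraically closed, the triple $(\cO_K,(a),\fm)$ satisfies the set-up of \ref{para:ac}, and \eqref{eq2;FRZ} transports the statement to $\fXrig$.

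The main obstacle is the second step. One must make sure that Zavyalov's theorem applies to each $\tY\in\StX$, that is, $\tY$ must be proper of finite presentation over $A$ in his topologically finitely presented setting, with $F_{(X,\tY)_{\Zar}}$ almost coherent. One must also confirm that his completion $F^{\rm for}$ agrees with $c^*F$ as defined in \ref{para:formal-completion}. For almost coherent sheaves, $c^*$ need not coincide with the naive limit $\lim_n F/I^n F$, so we should either use $c^*$ throughout or invoke Zavyalov's identification of the two.
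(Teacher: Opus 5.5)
Your proposal is correct and follows the paper's argument: the chain of equivalences $(*1)$--$(*3)$ from the proof of Theorem~\ref{thm;comparison}, with \eqref{eq:GFGA} replaced by Zavyalov's formal function theorem \eqref{para:ac1} (using $\hat A=A$), followed by \eqref{eq2;FRZ} for the rigid-analytic statement. Your extra verifications are details the paper leaves implicit: hypothesis (p) of Theorem~\ref{thm:tame-vs-etale-intro}, naturality in $\tY$, and the identification of the colimit of $p^*$ with the colimit of $p^{-1}$.
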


Therefore, the same proof of Theorem \ref{thm:colim-smooth-hironaka} gives:
\begin{thm}\label{thm:almost-colim-smooth-hironaka}
 Let $A$ be integral and set $K=\Frac(A)$. Assume $A$ is $I$-adically complete,   $S = \Spec(K)$,
 and $\tX\to \Spec A$ is proper of finite presentation. 
   Assume either $\ch(K)=0$ or $\ch(K)=p>0$ and $\dim X\le 3$.
    Let $F\in \ShvaCoh{(X,\tX)}$ be tamely birational.  
    Let $U\subseteq X$ be a dense open such that $X$ is regular in a neighborhood of $X-U$.  Then we have
    an equivalence\[
    R\Gamma((U,\tX)_t,F)\simeq \varinjlim_{\tX'\in\StX'(U)} R\Gamma(\RZ(\fX'),F_{\RZ(\fX')}),
    \]
    with $\Sigma'_{\tX}(U)$ as in Theorem \ref{thm:colim-smooth-hironaka}.
\end{thm}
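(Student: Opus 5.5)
The plan is to repeat the proof of Theorem \ref{thm:colim-smooth-hironaka} word for word, with Theorem \ref{thm:almost-comparison} used in place of Theorem \ref{thm;comparison}. Everything else in that proof is about the geometry of blowups, and coherence of $F$ plays no role there.

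First, since $F\in\ShvaCoh{(X,\tX)}$, each $F_{(U,\tY)_{\et}}$ is quasi-coherent. So Theorem \ref{thm:tame-vs-etale-intro} together with the identification of étale and Zariski cohomology for quasi-coherent sheaves gives \eqref{eq:et-zar}:
\[
R\Gamma((U,\tX)_t,F)\simeq \colim_{\tZ\in\StX(U)} R\Gamma(\tZ,F_{(U,\tZ)_{\Zar}}).
\]
Condition (p) needs a word here. The $\Z_{(p_x)}$-module condition holds because $F$ is an $\cO^t$-module, and $\cO^t$ is local at each point. I expect this is also implicit in \eqref{eq:et-zar} as stated, so I would simply cite that equation.

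Next, I regroup the colimit exactly as before. Given a $U$-admissible ideal $\cI$, take $\cJ$ to be the ideal of the closure of $V(\cI|_X)$. Then $\Bl_{\cI\cJ}(\tX)$ factors through $\tZ$, and it is an admissible blowup of $\tX'=\Bl_\cJ\tX$. This rewrites the colimit as a double colimit: the outer one over blowups $\tX'\to\tX$ with center in $\overline{X\setminus U}$, the inner one over $\tY\in\Sigma_{\tX'}$. Tame birationality gives $F_{(U,\tY)_{\Zar}}=F_{(X',\tY)_{\Zar}}$.

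Then I apply Theorem \ref{thm:almost-comparison} to $(X',\tX')$. For this I need $\tX'$ to be proper and of finite presentation over $\tS$, which holds because $\cJ$ is finitely presented. I also need the restriction of $F$ to $(X',\tX')_t$ to lie in $\ShvaCoh{(X',\tX')}$, which is immediate since the defining condition is imposed on all objects of $(X,\tX)_t$. The inner colimit therefore becomes $R\Gamma(\RZ(\fX'),F_{\RZ(\fX')})$.

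Finally, I prove cofinality of $\StX'(U)$ using Hironaka's principalization, or \cite{CossartPiltant-ResI} when $\dim X\le 3$. This produces regular centers $C_i$ on the generic fiber; I take closures, and end with a blowup that makes $\cJ$ invertible. That blowup is an isomorphism over $X_r$, so the resulting composite lies in $\StX'(U)$. By the universal property of blowups it factors through $\Bl_\cJ\tX$. The same argument that gives cofinality also shows the system is compatible with the transition maps, so the outer colimit can be indexed by $\StX'(U)$.

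The only step requiring care is the comparison step. One must check that pulling back along the admissible blowups $\tX'\to\tX$ preserves almost coherence on the Zariski site. It does, because the condition in $\ShvaCoh{(X,\tX)}$ is imposed on every $(U,\tY)\in(X,\tX)_t$, so no pullback argument is actually needed. Given that, no new obstacle arises beyond the proof of Theorem \ref{thm:colim-smooth-hironaka}.
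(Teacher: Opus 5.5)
Your proposal is correct and matches the paper's argument: the paper proves this theorem by noting that the proof of Theorem~\ref{thm:colim-smooth-hironaka} goes through verbatim once Theorem~\ref{thm:almost-comparison} (built on the almost formal function theorem \eqref{para:ac1} in place of GFGA) replaces Theorem~\ref{thm;comparison}. As you observe, the defining condition of $\ShvaCoh{(X,\tX)}$ is imposed on every object $(X',\tY)$ of $(X,\tX)_t$, so no pullback argument is needed to apply the comparison on each $\tX'$, and this is exactly why the substitution works.
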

\end{para}

\section{Cohomology of affinoids spaces}
In this section, we collect some results on cohomology of affinoid spaces.
Let $K$ be a field complete with respect to the non-archimedian absolute value $|-|_K:K \to \R_{\ge 0}$, which is assume to be non-trivial. 
We put
\[R:=\cO_K=\{x\in K|\; |x|\leq 1\}.\]

\begin{defn}\label{defn:Or}
	For a rigid space $X$ over $K$ and $r\in \R_{>0}$, we have a sheaf $\Or$ on $X$ given by 
	\eq{Or}{\Or(B)=\{f\in B|\; |f|_{\sup}<r\}\; \text{for an affinoid subdomain }\Sp(B)\subset X,}
	where $|-|_{\sup}$ is the sup norm on $B$.
	We have also a sheaf $\Oc$ on $X$ given by 
	\eq{Oc}{\Oc(B)=\{f\in B|\; |f|_{\sup}\leq {1}\}\; \text{for an affinoid subdomain }\Sp(B)\subset X.}
\end{defn}
\medbreak

The following theorem was first shown by Bartenwerfer in
\cite[Folgerung~3]{B2} (see also \cite[Theorem 17]{KST-BQ}).

\begin{thm}\label{thm.bartenwerfer}
	Let $X/K$ be a smooth affinoid.	
    		For all $i>0$, there exist $c\in K^\times$ with $|c|<1$ such that $c\cdot H^i(X,\Or)=0$, for all $r\in \R_{>0}$.
	\end{thm}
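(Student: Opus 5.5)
We plan to reduce the statement to the known vanishing on polydiscs together with an open-mapping argument, proceeding as follows. The sheaves $\Or$ for different $r$ are related by scaling: if $r\in |K^\times|$, say $r=|\lambda|$, then multiplication by $\lambda$ identifies $\cO(1)$ with $\Or$. In general $\cO(r)$ is sandwiched between $\cO(|\lambda|)$ and $\cO(|\lambda'|)$ for $|\lambda|\le r\le|\lambda'|$ with $|\lambda'/\lambda|$ bounded by a fixed constant (the value group is non-trivial, so such $\lambda,\lambda'$ exist with ratio at most $|\varpi|^{-1}$ for a fixed pseudo-uniformizer $\varpi$). So it suffices to find a single $c$ killing $H^i(X,\cO(1))$ and, more precisely, killing the image of $H^i(X,\cO(s))\to H^i(X,\cO(s'))$ uniformly for comparable radii. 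Then we absorb the extra factor $\varpi$ into $c$.

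To compute $H^i(X,\cO(1))$ we use \v{C}ech cohomology with respect to a finite affinoid covering. Since $X$ is smooth, locally (on a finite rational covering $\{X_j\}$) $X_j$ is étale over a closed polydisc, or better, by Kiehl's tubular neighbourhood / local structure of smooth affinoids, each $X_j$ and each finite intersection admits a finite étale map to a generalized polydisc $D$. On a generalized polydisc the vanishing $H^i(D,\cO(s))=0$ for $i>0$, $s>0$ (Bartenwerfer, van der Put, as recalled in the introduction) holds. Via the trace of a finite étale map $g\colon X_j\to D$ we get that $H^i(X_j,\cO(s))$ is killed by a constant depending on the discriminant of $g$ (the trace form gives a splitting of $g_*\cO_{X_j}$ up to a bounded denominator, and sup norms are comparable up to a fixed constant because $g$ is finite). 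This makes the higher cohomology on all members of the covering and their intersections killed by a uniform $c_0$, independent of $s$.

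Next we compare $H^i(X,\cO(s))$ with the \v{C}ech complex $\check C^\bullet(\{X_j\},\cO(s))$: a Leray spectral sequence argument with almost-acyclic terms shows the \v{C}ech cohomology agrees with $H^i$ up to a factor bounded by a power of $c_0$ depending only on the number of terms and $i$. Finally we need that the \v{C}ech cohomology $\check H^i(\{X_j\},\cO(s))$ is killed by a uniform constant. Here we use that the \v{C}ech complex of $\cO_X$ itself is exact in positive degrees (Tate acyclicity) and consists of Banach $K$-spaces; by the open mapping theorem the differential $d\colon \check C^{i-1}\to Z^i$ is open, so there is $c_1$ with $c_1\cdot(Z^i\cap \text{ball of radius }s)\subseteq d(\text{ball of radius }s)$. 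Since the sup norm on a reduced affinoid is equivalent to a Banach norm, with constants independent of $s$, and $\cO(s)$ is precisely the open $s$-ball for the sup norm, this gives $c_1\cdot \check H^i(\{X_j\},\cO(s))=0$ for all $s$ by homogeneity of the norm. Taking $c$ to be the product of $c_1$, the powers of $c_0$, and $\varpi$ finishes.

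The main obstacle is the uniformity in $r$: the open mapping constant must be compatible with scaling, which is where we need $\cO(r)$ to be exactly a norm ball and the homogeneity $|\lambda f|=|\lambda||f|$ (with strict versus non-strict inequalities handled by the $\varpi$ fudge). The local reduction to polydiscs via finite étale maps, with norm comparison constants, is the second delicate point; first we would try to avoid it entirely by applying the open mapping argument directly to a covering of $X$, since Tate acyclicity already holds for any affinoid covering, so smoothness is perhaps needed only to guarantee that the sup norm is a power-multiplicative norm equivalent to the Banach norm on every intersection.
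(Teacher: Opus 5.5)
Several of your steps are fine. The reduction to radii in $|K^\times|$ works. So does the open-mapping bound for the \v{C}ech cohomology of one \emph{fixed} finite affinoid covering $\cU$: the sup norm is a Banach norm on each reduced intersection, $\cO(s)$ is its open $s$-ball, and homogeneity gives one constant for all $s$. For a piece that is \emph{globally} finite \'etale over a generalized polydisc $D$, your trace argument also goes through, using exactness of $g_*$ for finite $g$ and trace duality to split $g_*\cO$ off a free module.

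\textbf{The gap.} The problem is the passage from these to $H^n(X,\cO(s))$. Since $\cO(s)$ is not acyclic on affinoids, the \v{C}ech-to-derived spectral sequence has $E_1^{p,q}=\prod H^q(X_{j_0\cdots j_p},\cO(s))$. You dispose of these terms by asserting that every $X_j$ \emph{and every finite intersection} is finite \'etale over a generalized polydisc. Neither half is justified.

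Intersections are not of this form: an affinoid subdomain of $X_j$ is not the preimage of a subdomain of $D_j$. This half can be repaired by inducting on $n$ over all smooth affinoids at once. Indeed, $E_\infty^{0,n}\subset\prod_j H^n(X_j,\cO(s))$, $E_\infty^{n,0}$ is a quotient of $\check H^n(\cU,\cO(s))$, and all other terms involve only $H^q$ of intersections with $q<n$.

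The induction still needs a finite covering of an arbitrary smooth affinoid by pieces on which the bound is already known, i.e.\ the local structure claim, and that is not available. Kiehl's tubular neighbourhood theorem does not give it. Smoothness only gives \'etale maps $U\to\bB^d$ locally. An \'etale map is only locally an open immersion into a finite \'etale cover $W\to V$ of a rational subdomain $V\subset\bB^d$. So you land on rational subdomains of $W$, and $V$ need not be a generalized polydisc; in fixed coordinates, these do not even form a neighbourhood basis of points of $\bB^d$ for $d\ge 2$. For such pieces the required uniform bound is again an instance of the theorem. In effect, the entire uniformity content of Bartenwerfer's theorem has been moved into this unsupported step.

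\textbf{The fallback.} The fallback in your last paragraph does not work either. The open mapping theorem controls only $\check H^n(\cU,\cO(s))$ for the chosen $\cU$, which differs from $H^n(X,\cO(s))$ by exactly the terms above. Refining $\cU$ changes the constant with no control. Moreover, that argument uses only that the sup norm is a complete norm, i.e.\ reducedness. If it worked, it would give the bound for every reduced affinoid, whereas the paper has to \emph{assume} the corresponding statement for singular (normal) affinoids as hypothesis $(\dagger)$.

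For the record, the paper does not prove this theorem itself; it quotes it from \cite[Folgerung~3]{B2} (see also \cite[Theorem 17]{KST-BQ}).
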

\begin{rmk}
    By the vanishing theorem of \cite[Lemma (1.4.13)]{vdP}, we can choose $c$ in Theorem \ref{thm.bartenwerfer} such that $c \cdot H^i(X,\Oc)=c \cdot H^i(X,\Or)=0$ for all $i>0$ and all $r\in \R_{>0}$. 
\end{rmk}
\begin{para}\label{para:gen-poly-disc}
Recall from \cite[(3.9)]{vdP} that a {\em generalized polydisk} over $K$ is a subset $D\subset \bB^d=\Sp(K\langle T_1,\dots,T_d\rangle)$
of the form 
\[D=D_1\times\ldots\times D_n,\]
where each $D_k\subset \bB^1$ is given by 
\[D_k =\{z\in \bB^1\mid  |z-a|\le \rho \quad \text{and}\quad |z-a_i|\ge \rho_i, \quad i=1,\ldots,s \},\]
where $a, a_1,\ldots, a_s\in K$ and $\rho,\rho_1,\ldots, \rho_s\in |K^\times|$, such that 
$|a|\le 1$, $\rho\le 1$, $|a-a_i|\le \rho$, $\rho_i\le \rho$, $|a_i-a_j|\ge \rho_i$, for $i\neq j$, 
and $s\ge 0$. We are mostly interested in generalized polydisks of the form
\[\Sp(K\langle T_1\rangle)\times\ldots \times \Sp(K\langle T_j\rangle)\times 
\Sp(K\langle T_{j+1}, T_{j+1}^{-1}\rangle)\times \ldots\times \Sp(K\langle T_{d}, T_{d}^{-1}\rangle).\]
\end{para}

We will also use the following theorem due to Bartenwerfer \cite[Theorem]{B2} and 
van der Put \cite[(3.15) Theorem and (3.20) Corollary]{vdP}.

\begin{thm}\label{thm.van1}
	For a generalized polydisk $D\subset \bB^d$, we have for all $r>0$ 
	\[H^i(D,\sO(r))=0,\quad\text{for }i>0, \quad \text{and}\quad H^0(D,\sO(r))=K(r)\hat{\otimes}_R H^0(D,\cO^o),\]
    where $K(r)=\cO(r)(\Sp(K))=\{a\in K\mid |a|<r\}$.
\end{thm}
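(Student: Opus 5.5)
The plan has three steps: first prove the $H^0$ statement, then prove the vanishing for $d=1$, then pass to products. The key structural input is that for a generalized polydisk $D$ the affinoid algebra $B=\cO(D)$, with its sup norm, has an \emph{orthonormal basis} $\{e_n\}$ over $K$. Concretely, every $f\in B$ is uniquely written as $f=\sum_n a_n e_n$ with $a_n\to 0$ and $|f|_{\sup}=\max_n|a_n|$. For $d=1$ this comes from the Mittag-Leffler decomposition. Let $D_k$ be as in \ref{para:gen-poly-disc}, with $\rho,\rho_i\in|K^\times|$, and choose $c,c_i\in K$ with $|c|=\rho$, $|c_i|=\rho_i$. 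Then every $f\in\cO(D_k)$ decomposes uniquely as
\[f=\sum_{n\ge0}b_n\Big(\frac{z-a}{c}\Big)^n+\sum_{i=1}^s\sum_{m\ge1}b_{i,m}\Big(\frac{c_i}{z-a_i}\Big)^m,\]
and the sup norm equals the maximum of the $|b_n|$ and $|b_{i,m}|$. For general $d$ one takes products of these bases, since $\cO(D)=\hat\otimes_k\cO(D_k)$ and the sup norm is multiplicative on tensors of such bases. The $H^0$ statement follows at once. A maximum over a null sequence is attained, so $|f|_{\sup}<r$ if and only if all $|a_n|<r$. Hence $\cO(r)(D)=\{\sum a_ne_n: a_n\in K(r),\ a_n\to0\}=K(r)\hat\otimes_R\cO^o(D)$, because $\cO^o(D)$ is the completed free $R$-module on the $e_n$.

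For the vanishing I would work with Čech cohomology. Every admissible covering of $D$ has a finite refinement by rational, hence Laurent, subdomains (Gerritzen–Grauert). So it suffices to show that the Čech complex of $\cO(r)$ is exact in positive degrees for Laurent coverings. These coverings are generated by the two-piece coverings $\{|g|\le1\}\cup\{|g|\ge1\}$, and they can be refined to products of one-variable coverings by generalized disks. In dimension one, take such a covering $D=D'\cup D''$ with $D'$, $D''$ and $D'\cap D''$ again generalized disks. Then $H^1=0$ amounts to surjectivity of
\[\cO(r)(D')\oplus\cO(r)(D'')\to\cO(r)(D'\cap D'').\]
This holds because the Mittag-Leffler decomposition on $D'\cap D''$ splits a function into a part holomorphic on $D'$ and a part holomorphic on $D''$, and the splitting does not increase sup norms. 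Strict inequality $<r$ is therefore preserved. The higher Čech groups for two-set coverings vanish trivially. An induction on the number of holes and pieces, using Leray's theorem, then gives the result for all finite coverings by generalized disks.

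For $d>1$, I would use the product covering $\mathfrak U=\mathfrak U_1\times\cdots\times\mathfrak U_d$. Each $\mathfrak U_k$ is a finite covering of $D_k$ by generalized disks, and its pieces and intersections carry orthonormal bases. The Čech complex of $\cO(r)$ for $\mathfrak U$ is then identified with $K(r)\hat\otimes_R\big(\hat\otimes_{R,k}\check C(\mathfrak U_k,\cO^o)\big)$. Here the one-variable argument shows each $\check C(\mathfrak U_k,\cO^o)$ is a resolution of $\cO^o(D_k)$ that is \emph{strictly} exact, meaning it is split compatibly with the norms. A completed tensor product of norm-split exact complexes of orthonormalizable Banach modules is again norm-split exact, which gives vanishing in positive degrees. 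Finally, product coverings of this kind are cofinal among the relevant Laurent coverings of $D$, because their members are again generalized polydisks. This is the base-change reduction to $d=1$ alluded to in the introduction.

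The main obstacle I expect is in the third step. Cofinality of product coverings does not directly give sheaf cohomology; one needs a Leray-type argument showing that all the generalized polydisks appearing are $\cO(r)$-acyclic. I would try to set this up as an induction on $d$ and on the complexity of the covering, mirroring van der Put's argument. The delicate point throughout is keeping the norm estimates sharp enough that strict inequality $<r$ survives. This is exactly why $\cO(r)$, and not $\cO^o$ with only approximate bounds, is the right sheaf here.
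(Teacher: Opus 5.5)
There is a genuine gap. It sits where you pass from Čech complexes of special coverings to the derived-functor groups $H^i(D,\cO(r))$, and the cofinality claim you use to bridge it is false.

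\emph{Counterexample for $d\ge 2$.} Let $K$ be algebraically closed and take the Laurent covering of $\mathbb{B}^2$ by $\{|z_1-z_2|\le|\pi|\}$ and $\{|z_1-z_2|\ge|\pi|\}$. If $D_1\times D_2$ lies in the first set, then $|y_1-y_1'|\le\max(|y_1-y_2|,|y_1'-y_2|)\le|\pi|$ for $y_1,y_1'\in D_1$ and $y_2\in D_2$. So $D_1$ and $D_2$ lie in single residue classes. Finitely many such products therefore cover diagonal points $(x,x)$ only for $x$ in finitely many residue classes. The remaining diagonal points lie in neither a product inside the first set nor the second set.

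\emph{Counterexample for $d=1$.} Let $K$ be discretely valued and cover $\mathbb{B}^1$ by $\{|z^2|\le|\pi|\}$ and $\{|z^2|\ge|\pi|\}$. This covering has no refinement by generalized discs in the sense of \ref{para:gen-poly-disc}. A generalized disc containing a root $z_0$ of $z^2-\pi$ must have outer radius $1$, because radii lie in $|K^\times|$. It then contains points with $|z|=1$ and points with $|z|=|\pi|^{3/4}$, so it lies in neither member.

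Also, ``rational, hence Laurent'' is not a valid step. Tate's reduction from rational to Laurent coverings is a two-stage refinement argument. It needs Čech acyclicity on all the intermediate Laurent domains $\{|f|\le 1\}$, $\{|f|\ge 1\}$ for arbitrary $f$, and these are not generalized polydisks.

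This cannot be repaired by the Leray-type argument you hope for. Leray or Cartan needs a family of affinoids that is stable under intersections, has cofinal coverings, and on which $\cO(r)$ is Čech-acyclic. Generalized polydisks fail cofinality, as shown above. Enlarging to all affinoids is not available either: acyclicity of $\cO(r)$ on general affinoids is exactly what one does not have, which is why Theorem \ref{thm.bartenwerfer} is only a bounded-torsion statement. So the acyclicity of the pieces that Leray requires is the theorem itself.

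The missing idea is a way to compute sheaf cohomology without refining coverings. The paper does not prove this theorem; it cites Bartenwerfer and van der Put. Its introduction indicates the route: the sequence $0\to\cO(r)\to\cO\to\cO/\cO(r)\to 0$ together with Tate acyclicity reduces the problem to $\cO/\cO(r)$, which is overconvergent. For a coordinate projection $p\colon D=D'\times D_d\to D'$, the base change theorem of de Jong--van der Put computes the stalks of $R^ip_*(\cO/\cO(r))$ at analytic points $a$. They are the cohomology of the fibre, a generalized disc over the completed residue field $F_a$. This reduces the problem to the one-variable case over arbitrary complete fields, where the Mittag-Leffler decomposition enters. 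The paper itself uses this mechanism in the proof of Lemma \ref{lem:hdi-general}.

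Your $H^0$ computation via orthonormal bases is fine. Your Čech-level computations are also reasonable ingredients: the norm-preserving Mittag-Leffler splitting for two-piece coverings, and completed tensor products of norm-split complexes. On their own, however, they do not give $H^i(D,\cO(r))=0$ for $i>0$.
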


We deduce from the above theorem a necessary result for this note.

\def\hC{\hat{C}}
\def\hM{\hat{M}}
\def\Uring{U^{\ring}}

\begin{lemma}\label{lem1;CohRigid}
	Let  $\P^n$ be the rigid projective space over $K$. 
    Then for all $s> 0$
	\[R\Gamma(\P^n, \cO(s))=K(s)[0],\]
    where $K(s)[0]$ is the $R$-module $\cO(s)(\Sp (K))=\{a\in K\mid |a|_{\rm sup}<s\}$ sitting in degree $0$.
\end{lemma}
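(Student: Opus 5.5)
The plan is to compute $R\Gamma(\P^n,\cO(s))$ by Čech cohomology with respect to the standard admissible affinoid covering $\cU=\{U_0,\dots,U_n\}$, where $U_i=\{|T_j/T_i|\le 1 \text{ for all } j\}$. Each finite intersection $U_{i_0}\cap\dots\cap U_{i_m}$ is a generalized polydisk of the form singled out in \ref{para:gen-poly-disc}. Indeed, in the coordinates $T_j/T_{i_0}$ it is isomorphic to $\Sp(K\langle z_j \ (j\notin I),\, z_{i}^{\pm1}\ (i\in I\setminus\{i_0\})\rangle)$ with $I=\{i_0,\dots,i_m\}$. Theorem \ref{thm.van1} then tells us that $H^q(U_I,\cO(s))=0$ for $q>0$. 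Since $\cO(s)$ is a sheaf for the strong G-topology and the covering is a finite affinoid covering, a Leray-type argument makes the Čech complex $\check C^\bullet(\cU,\cO(s))$ compute $R\Gamma(\P^n,\cO(s))$. The problem is thus reduced to computing the cohomology of this explicit complex.

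Next I describe the terms. By Theorem \ref{thm.van1}, $H^0(U_I,\cO(s))=K(s)\hat\otimes_R H^0(U_I,\cO^o)$. Here $H^0(U_I,\cO^o)$ is the ring of power-bounded functions $R\langle z_j, z_i^{\pm1}\rangle$, i.e.\ the $\pi$-adically completed Laurent ring. Concretely, $\cO(s)(U_I)$ consists of the convergent Laurent series $\sum_{\alpha} a_\alpha T^\alpha$ (homogeneous of degree $0$, with the allowed monomial exponents) such that $\sup_\alpha|a_\alpha|<s$. This uses that the sup norm on these Laurent polydisks is the Gauss norm. The Čech complex therefore decomposes as a completed product, indexed by multi-degrees $\alpha\in\Z^{n+1}$ with $\sum\alpha=0$, of subcomplexes $C^\bullet_\alpha\otimes K$. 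These are exactly the combinatorial complexes appearing in the classical computation of $H^*(\P^n,\cO)$, and the coefficient condition is that the coefficients are uniformly bounded in absolute value by something $<s$.

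The main point is to check that this decomposition survives the norm conditions. Classically, $C^\bullet_\alpha$ is acyclic for $\alpha\ne0$ and has $H^0=$ constants for $\alpha=0$; the latter uses $\deg=0$, since the degree $-n-1$ part, which carries $H^n$, does not occur. For each $\alpha$ the complex $C^\bullet_\alpha$ is a complex of free $\Z$-modules with $0/1$ combinatorics, and it is contractible via an explicit homotopy given by a fixed index $i$ with $\alpha_i\ge0$, or alternatively by the standard "cone" homotopy. This homotopy has integer coefficients and operator norm $\le1$. Applying it coefficientwise to a cocycle whose coefficients satisfy $|a_\alpha|\le s'<s$ uniformly yields a primitive satisfying the same bound, and the primitive converges because the homotopy preserves the tending-to-zero of coefficients. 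Hence the full completed complex $\check C^\bullet(\cU,\cO(s))$ has $H^q=0$ for $q>0$ and $H^0=K(s)$, the constants of absolute value $<s$.

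I expect the main obstacle to be this last step. One has to produce a contracting homotopy that is uniform in $\alpha$ and norm-nonincreasing, so that it is compatible with the strict inequality $<s$ and with completion. If the direct combinatorial homotopy becomes messy, my fallback is induction on $n$. I would cover $\P^n$ by $U_0$ and its complement-type affinoid $V=\{|T_0|\le|T_j| \text{ for some } j\}$, use the Mayer–Vietoris sequence together with Theorem \ref{thm.van1} for $U_0$ and for the annulus-type intersection, and compute the boundary map explicitly on Laurent coefficients, where bounds are visibly preserved.
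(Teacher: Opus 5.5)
Your proposal is correct and is essentially the paper's proof. Both compute $R\Gamma$ by the \v{C}ech complex of the standard affinoid covering, with the intersections made acyclic by Theorem \ref{thm.van1}. Both then kill everything except the constants using the classical multidegree contracting homotopy from the computation of $H^*(\P^n,\cO)$; the paper cites the proof of Stacks Tag 01XT, which is the same homotopy you describe. The only difference is presentation: you check by hand that this homotopy is integral and norm-nonincreasing, so it respects the bound ${}<s$ and completion, whereas the paper passes to the complex reduced modulo constants and invokes $\pi$-adic continuity of the homotopy.
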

\begin{proof}
	
	Consider the standard affine open covering
	\[ \Proj R[T_0,\dots,T_n] =\underset{0\leq i\leq n}{\bigcup} \Spec C_i\;
	\text{ with } C_i = R[T_0,\dots,T_n,\frac{1}{T_i}]_0.\]
	Taking the $\pi$-adic completions, it induces a covering 
	\[ \P^n=\underset{0\leq i\leq n}{\bigcup} \Sp(\hC_i[1/\pi]),\]
	where $\hC_i$ is the $\pi$-adic completion of $C_i$. 
	For $1\leq i_0<\cdots<i_p\leq n$, set 
	\[D_{i_0,\ldots, i_p}:= \Sp(\hat{C}_{i_0}[1/\pi])\times_{\P^n} \cdots\times_{\P^n} \Sp(\hat{C}_{i_p}[1/\pi]).\]
    We have 
    \[D_{i_0,\ldots, i_p}=\Sp(\hC_{i_0,\dots,i_p}[1/\pi]), \quad \text{where }
    C_{i_0,\dots,i_p}=R[T_0,\dots,T_n,\frac{1}{T_{i_0}\cdots T_{i_p}}]_0.\]
	By Theorem \ref{thm.van1} we have
	\[H^i(D_{i_0,\dots,i_p},\cO(s))=0, \quad  \text{for } i>0, \quad \text{and}\quad 
    H^0(D_{i_0,\dots,i_p},\cO(s))= K(s)\hat{\otimes}_R \hC_{i_0,\ldots, i_p}.\] 
    Hence, $R\Gamma(\P^n,\cO(s))$ is computed by using the \v Cech complex $\hM^\bullet$ with  
	\[ \hM^p:=\underset{ i_0<\cdots<i_p}{\bigoplus} K(s)\hat{\otimes}_R \hC_{i_0,\dots,i_p}.\]
    Consider the complex $K(s)^\bullet$ with $K(s)^p=\oplus_{i_0<\ldots<i_p} K(s)$ and  differentials $K(s)^p\to K(s)^{p+1}$
    given by the usual alternating sum. Then the natural augmentation map $K(s)[0]\to K(s)^\bullet$ is a quasi-isomorphism and it
    suffice to show that $\hM^\bullet_{\red}:={\rm coker}(K(s)^\bullet\to \hM^\bullet)$ is acyclic. 
    Now, consider the \v Cech complex of $R$-modules $M^\bullet$ with  
	\[ M^p:=\underset{ i_0<\cdots<i_p}{\bigoplus} K(s) \otimes_R C_{i_0.\dots,i_p},\]
	and the reduced complex $M^\bullet_{\red}$ with $M^p_{\red} = M^p/K(s)$.
	By the proof of \cite[\href{https://stacks.math.columbia.edu/tag/01XT}{Tag 01XT}]{stacks-project}, there exists a homotopy
	$h: M^{p+1}_{\red}\to M^p_{\red}$ such that $dh +hd=\id$, where $d: M_{\red}^\bullet\to M_{\red}^{\bullet+1}$ is the differential. It is obviously continuous with respect to the $\pi$-adic topology so that it induces a homotopy $\hat{h}: \hM^{p+1}_{\red}\to \hM^p_{\red}$ on the $\pi$-adic completions, which implies the desired acyclicity of $\hM^\bullet_{\red}$.
\end{proof}

\section{Proofs of the main theorems}\label{sec:4}

\begin{para}\label{para:setup}
We continue to denote by $|-|: K\to \R_{\ge 0}$ the absolute value on the complete non-archimedean field $K$ and denote by 
$R=\sO_K$ its ring of integers. We fix $\pi\in R\setminus\{0\}$ with $|\pi|:=c\in (0,1)$ and denote by
\[v_K: K^\times\to \R, \quad v_K(x):=\log_c|x| \]
the associated valuation. Note that $v_K(\pi)=1$. 

Throughout this section we assume that the valuation ring $R$ is N-2 and 
$\tX$ is a faithfully flat $R$-scheme of  finite type. Set $X=\tX\otimes_{R} K$.
We remark:
\begin{enumerate}[label=(\arabic*)]
    \item\label{para:setup1} $\tX$ is of finite presentation over $R$, by \cite[I, Corollaire (3.4.7)]{RaynaudGruson},  
       and  each finitely generated ideal sheaf in $\sO_{\tX}$ is also finitely presented, 
      by \cite[I, Th\'eorem (3.4.6)]{RaynaudGruson}. Hence $\sO_{\tX}$ is coherent. 
    \item Let $U\inj X$ be a quasi-compact open immersion. Then any $U$-admissible blow-up $\tY\to \tX$ is again
    faithfully flat and of finite type over $R$.
    \item Recall that an integral domain $A$ is N-2 if its integral closure in any finite field extension of its fraction field 
    is a finite $A$-module.  For example, $R$ is N-2 if $K$ is a complete discrete valuation field,
    or $K$ is algebraically closed.
    \end{enumerate}
\end{para}

\begin{lemma}\label{lem:coh}
In the situation \ref{para:setup} let $U\subset X$ be a dense open normal subscheme and denote by $\tX^{\rm int}$ 
the integral closure of $\tX$ in $U$. Then $\tX^{\rm int}$ is independent of $U$, 
the morphism $\tX^{\rm int}\to \Spec R$ is of finite presentation, and $\sO_{\tX^{\rm int}}$ is coherent.
\end{lemma}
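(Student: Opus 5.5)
The plan has three steps: reduce the independence of $U$ to a local statement about regular functions on a normal scheme, then use the N-2 hypothesis on $R$ for finiteness over $\tX$, and finally invoke the Raynaud--Gruson results from \ref{para:setup}\ref{para:setup1} for finite presentation and coherence.

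\textbf{Independence of $U$.} The question is local on $\tX$, so I take $\tX=\Spec \tA$ affine. Then $\tX^{\rm int}=\Spec \tA_U$, where $\tA_U$ is the integral closure of $\tA$ in $\cO(U)$. Let $U'\subset U$ be dense open normal subschemes. Restriction gives an inclusion $\cO(U)\subset \cO(U')$, since $U$ is reduced and $U'$ is dense, and hence $\tA_U\subset \tA_{U'}$. For the reverse inclusion, take $f\in \cO(U')$ integral over $\tA$; in particular $f$ is integral over $\cO(U)$. Because $U$ is normal, each local ring $\cO_{U,x}$ is a normal domain, integrally closed in its fraction field, which equals $\cO_{U',y}$ at the generic point $y$ of the corresponding component. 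So $f$ extends across every point of $U$, that is, $f\in \cO(U)$. Hence $\tA_U=\tA_{U'}$. For two arbitrary dense normal opens $U_1,U_2$, I compare both with $U_1\cap U_2$, which is again dense, open and normal. This shows $\tX^{\rm int}$ does not depend on $U$.

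\textbf{Finiteness of $\tX^{\rm int}\to\tX$.} I then replace $U$ by a small dense affine open, for instance a disjoint union of dense affine opens of the irreducible components; here I use that $X$ has finitely many components and that normality makes the components disjoint. This reduces the claim to showing that, for each component, the integral closure of the domain $\tA_i$ (the image of $\tA$ in a component) in its fraction field $L_i$ is finite over $\tA$. Since $\tA$ is flat over $R$, the minimal primes of $\tA$ lie over the generic point of $R$, so the $\tA_i$ are domains of finite type over $R$. I expect this step to be the main obstacle. Finiteness should follow because $R$ is N-2: it should imply that $R$ is universally Japanese. I would argue this via Nagata's theorem that a finite-type algebra over a Nagata ring is Nagata. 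A valuation ring is Nagata as soon as it is N-2, since it is a normal domain of dimension one in the relevant cases, and more generally Japanese valuation rings are Nagata in the sense needed here. Citing \cite{stacks-project} for ``finite type over Nagata is Nagata'' then gives that $\tA_i$ is N-2, so its normalization is finite. Consequently $\tX^{\rm int}\to\tX$ is finite, and $\tX^{\rm int}$ is of finite type over $R$.

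\textbf{Finite presentation and coherence.} The scheme $\tX^{\rm int}$ is $R$-torsion free, being a subring of $\cO(U)$, and hence flat over the valuation ring $R$. A flat scheme of finite type over $R$ is of finite presentation by \ref{para:setup}\ref{para:setup1}, that is, by \cite[I, Corollaire (3.4.7)]{RaynaudGruson}. Coherence of $\cO_{\tX^{\rm int}}$ then follows exactly as in \ref{para:setup}\ref{para:setup1} from \cite[I, Th\'eorem (3.4.6)]{RaynaudGruson}, since every finitely generated ideal is finitely presented.
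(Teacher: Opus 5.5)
\textbf{Gap.} Most of your proposal matches the paper's route and is correct: independence of $U$ via normality (the paper phrases this as normalization of $\tX$ in the function field of $U$), finiteness of $\tX^{\rm int}\to\tX$ from a Japanese-type property of $R$, torsion-freeness giving flatness, and \ref{para:setup}\ref{para:setup1} giving finite presentation and coherence. The gap is the step you flag yourself: deducing that $R$ is universally Japanese from the N-2 hypothesis.

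In \cite{stacks-project}, the notion of a Nagata ring includes a Noetherian hypothesis. Nagata's theorem (``finite type over Nagata is Nagata'') is likewise proved only for Noetherian rings. So your argument works when $R$ is a DVR, since a Noetherian one-dimensional domain is Nagata as soon as it is N-2. It fails when the value group is non-discrete, for example when $K$ is algebraically closed. That case is squarely within \ref{para:setup}, and it is exactly the case used later in Corollary \ref{cor:Gar}\ref{cor:Gar3} and Theorem \ref{thm:fg-uncond}. There $R$ is a non-Noetherian rank-one valuation ring, so nothing you cite applies. Your sentence ``Japanese valuation rings are Nagata in the sense needed here'' is not a consequence of a cited result; it is the entire nontrivial content of the finiteness step.

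\textbf{What is needed.} You need a separate theorem saying that an N-2 valuation ring is universally Japanese, with no Noetherian hypothesis. The paper takes this from \cite[Theorem 5]{Lyu26}. With that input, the domains $\tA/\fp_i$ obtained in your reduction have finite normalizations over $\tA$, and the rest of your argument goes through.
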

\begin{proof}
We may assume $U$ to be integral and  $\tX=\Spec \tA$. 
As $U$ is normal the scheme $\tX^{\rm int}$ is equal to the normalization of $\tX$ in the function field of $U$, 
which is independent of $U$. As $R$ is a valuation ring which is N-2 it is also  universally Japanese, by \cite[Theorem 5]{Lyu26},
and hence $\tX^{\rm int}$ is finite over $\tX$. Thus $\tX^{\rm int}$ is of finite type and surjective over $R$. 
As the sheaf $\sO_{\tX^{\rm int}}$ has no $R$-torsion the scheme $\tX^{\rm int}$ is flat over $R$, as well.
The statement follows from \ref{para:setup}\ref{para:setup1}.
\end{proof}

\begin{definition}\label{def;Gar}
Let $r\in \Q$ and choose  $m\in \bZ_{>0}$, $n\in \Z$ with $r=n/m$.
	For $(x,v, \epsilon)\in \Spa(X,\tX)$ and $(L,w)$ a finite tame extension of $(k(x),v)$,  we set
	\eq{eq;def;Gar}{\Gar_w= \begin{cases}
    \{a\in \sO_{X,L}^h|\; w(\bar{a}^m/\pi^n)>0\} & \text{if } w(\pi)>0\\
    \{a\in \sO_{X,L}^h|\; w(\bar{a})\ge 0\} & \text{if } w(\pi)=0,
    \end{cases}}
    where $\bar{a}\in L$ denotes the residue class of $a\in \sO_{X,L}^h$, see \ref{para:beta} for the notation.
Note that this does not depend on the choice of $m$ and $n$,
furthermore,  $\Gar_w=\sO^h_{\tX,L,w}$, if $w(\pi)=0$.
If $(L',w')/(L,w)$ is a finite tame extension, then the 
natural map  $\sO_{X,L}^h\to \sO_{X,L'}^h$ maps 
$\Gar_w$ into $\Gar_{w'}$. Hence, applying the construction in \ref{para:beta} to the \'etale sheaf $\sO_X$, we get a tame sheaf
$\Gar\in \Shv_t((X,\tX))$, which in fact is a sheaf of $\sO^t$-modules.
Note that by definition, we have for all $(U,\tU)\in (X,\tX)_t$
\eq{def;Gar2}{\Gar(U,\tU)\subset \Gar(U,\tU[\tfrac{1}{\pi}])= \sO^t(U,\tU[\tfrac{1}{\pi}])=\sO(\tU^{\rm int})[\tfrac{1}{\pi}],}
where the last equality holds by \eqref{eq:Ot}.
\end{definition}

\begin{lemma}\label{lem:Gar3}
Let $(U,\tU)\in (X,\tX)_t$ with $\tU=\Spec \tA$ surjective over $\Spec R$ and $\tA[\tfrac{1}{\pi}]$ normal, 
and $U\subset \tU$ a quasi-compact dense open subset. For $r\in \Q$, we have 
\begin{align*}
\Gar(U,\tU)=\Gar(\tU[\tfrac{1}{\pi}],\tU) & =
\left\{a\in \tA[\tfrac{1}{\pi}]\mid  v_x(\bar{a})>r, \text{ for all }x\in U_{(0)}\right\}\\
 & =\left\{a\in \tA[\tfrac{1}{\pi}] \mid \tfrac{a^m}{\pi^n}\in \sqrt{\pi \tA^{\rm int}}\right\},    
\end{align*}
where $U_{(0)}$ denotes the set of closed points of $U$, 
$v_x$ the unique (real-valued) valuation on the residue field  $K(x)$
extending $v_K$, $\bar{a}$ the image of $a$ in $K(x)$, and  $\tA^{\rm int}$ denotes the integral closure of $\tA$ 
in $\tA[\tfrac{1}{\pi}]$, and $m\in \bZ_{>0}$, $n\in \Z$ with $r=n/m$.
\end{lemma}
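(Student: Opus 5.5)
We prove the three equalities in turn, working directly from the definition of $\Gar$ via the construction of \ref{para:beta}. An element $a\in\sO(U)$ lies in $\Gar(U,\tU)$ iff for every $(x,v,\epsilon)\in\Spa(U,\tU)$ there is a finite tame extension $(L,w)$ with $a_L\in\Gar_w$. The conditions in \eqref{eq;def;Gar} depend only on the residue class $\bar a$ and on $w$. Moreover, $w(\bar a^m/\pi^n)>0$ holds for some tame extension iff $v(\bar a^m/\pi^n)>0$, since $w$ restricts to $v$ and the value group of $w$ is still ordered compatibly. So membership reduces to the following condition: for all points $x\in U$ and valuations $v$ on $\kappa(x)$ with center on $\tU$, either $v(\pi)=0$ and $v(\bar a)\ge0$, or $v(\pi)>0$ and $v(\bar a^m/\pi^n)>0$.

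\textbf{Step 1: description by integral closure.} Let $\tA^{\rm int}$ be the integral closure of $\tA$ in $\tA[\tfrac1\pi]$. By \eqref{def;Gar2} and the description $\sO^t(U,\tU)=\sO(\tU^{\rm int})$ from \eqref{eq:Ot}, together with normality of $\tA[\tfrac1\pi]$ and density of $U$, we get $a\in\tA[\tfrac1\pi]$. The condition at valuations with $v(\pi)=0$ is then automatic after passing to $\tU[\tfrac1\pi]$; I will check this carefully.

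The valuative characterization of integral closure says that $b\in\sqrt{\pi\tA^{\rm int}}$ iff $v(b)>0$ for every valuation $v$ of $\Frac$ (per component) centered on $\tU$ with $v(\pi)>0$. Apply this to $b=a^m/\pi^n$. Using \cite[Lemma 2.7]{MRS-1}-type invariance, and the fact that valuations at nongeneric points of $U$ can be lifted to valuations on generic points via composition, we obtain the equality between $\Gar(U,\tU)$ and the last set. This also gives independence of $U$, i.e.\ $\Gar(U,\tU)=\Gar(\tU[\tfrac1\pi],\tU)$: both sides equal the same set defined purely in terms of $\tA$. This is the tamely-birational property.

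\textbf{Step 2: closed points.} It remains to show that $\tfrac{a^m}{\pi^n}\in\sqrt{\pi\tA^{\rm int}}$ iff $v_x(\bar a)>r$ for all closed points $x\in U$. Here $\kappa(x)$ is finite over $K$, so $v_x$ is unique and has center on $\tU$ because $\tA\to\kappa(x)$ lands in $\sO_{\kappa(x)}$ (after a finite step this uses surjectivity onto $\Spec R$ and that closed points of $U$ specialize into the special fibre). The forward direction is then clear.

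For the converse I pass to rigid geometry. Set $f=a^m/\pi^n$. By Lemma \ref{lem:Orig-flat-Ot0}-type identification (Lemma \ref{lem:Orig-flat-Ot}, using finiteness of $\tA^{\rm int}$ from Lemma \ref{lem:coh}), the ring $\widehat{\tA^{\rm int}}$ is $\cO^o$ of the affinoid $\fU^{\rig}$. Points of $\fU^{\rig}$ correspond to closed points $x$ of $\tU[\tfrac1\pi]$ with $|\bar a(x)|\le$ bounded, and $U$ is dense. Since closed points of $U$ are Zariski dense in the affinoid, the maximum modulus principle gives $|f|_{\sup}<1$. Here it is important that the supremum is attained and that $U\cap\fU^{\rig}$ is dense, so I expect density plus continuity to suffice. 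Finally, the standard fact that $|f|_{\sup}<1$ iff $f$ is topologically nilpotent, i.e.\ $f\in\sqrt{\pi\cO^o}$, pulls back along the faithfully flat completion to give $f\in\sqrt{\pi\tA^{\rm int}}$.

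\textbf{Main obstacle.} The main obstacle is this last converse step. It requires relating the Zariski-closed points of $U$, which may be a proper open subset, to the points of the rigid generic fibre, and descending the radical membership from the completion. I would handle the descent by observing that $\tA^{\rm int}/\pi\cong\widehat{\tA^{\rm int}}/\pi$.
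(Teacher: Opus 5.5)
Your route is correct in outline and genuinely different from the paper's. One caveat is shared with the paper; see the last paragraph.

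\textbf{Comparison.} The paper never proves $\Gar(U,\tU)=\{a\mid a^m/\pi^n\in\sqrt{\pi\tA^{\rm int}}\}$ directly. It sandwiches $\Gar(\tU[\tfrac1\pi],\tU)\subset\Gar(U,\tU)$ between the radical set and the closed-point set, and closes the chain with a single claim. So even the independence of $U$ depends on the rigid-analytic argument.

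Your Step 1 instead tests membership only at the generic points, which lie in $U$ by density, against valuations of arbitrary rank, and then uses the valuative description of $\sqrt{\pi\tA^{\rm int}}$. The reverse inclusion is just integrality of $\tA^{\rm int}$ over $\tA$ at every point of $\Spa(U,\tU)$; the ``lifting by composition'' is not needed. This step is complete and purely algebraic, so Lemma~\ref{lem:Gar-shift}\ref{lem:Gar-shift1} and Corollary~\ref{cor:Gar}\ref{cor:Gar0} need no analytic input.

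For the closed-point description, the paper argues algebraically. It normalizes $f_1=\lambda^{-1}f^e$ so that $|\hat f_1|_{\sup}=1$. It shows that $\tA[1/f_1]$, and then $\tA[1/f_1][1/h]$ with $D(h)\subset U$ and $h$ suitably rescaled, are faithfully flat over $R$. Lemma~\ref{lem:rig-point} then produces a closed point of $U$ specializing into $\tU$ with $v_x(\bar f)\le 0$. Your maximum-modulus argument is the analytic form of the same idea: the locus $|f|=|f|_{\sup}$ is a non-empty open set that meets $U$. It is more conceptual; the paper's version avoids the canonical topology altogether.

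\textbf{Steps in Step 2 that need more than you state.}
First, density plus continuity only gives $|f|_{\sup}\le 1$, since values $|f(y)|<1$ at $U$-points could accumulate at $1$. You need local constancy of $|f|$ near a point $y_0$ with $f(y_0)\neq 0$. Take $g=p(f)$, where $p$ is the minimal polynomial of $f(y_0)$ over $K$; if $|g(y)|$ is small enough, then $|f(y)|=|f(y_0)|$.

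Second, you need density in the canonical topology, not Zariski density. Let $J$ be the ideal of $\Spec\tA[\tfrac1\pi]\setminus U$. A non-empty affinoid subdomain $V\subset\fU^{\rig}$ is not contained in $V(J)$: the map $\tA[\tfrac1\pi]\to\widehat{\tA}[\tfrac1\pi]\to\cO(V)$ is flat, so by going-down $J\cO(V)$ is not nilpotent.

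Third, $\tA^{\rm int}\to\widehat{\tA^{\rm int}}$ is flat but not faithfully flat; for instance $1-\pi T$ becomes a unit in $R\langle T\rangle$. The descent has to go through $\tA^{\rm int}/\pi^k\cong\widehat{\tA^{\rm int}}/\pi^k$, together with $\cO^o(\fU^{\rig})=\widehat{\tA^{\rm int}}$ from Lemma~\ref{lem:Orig-flat-Ot}, as you say at the end. The paper uses exactly this descent, implicitly, when it asserts $|\lambda|\ge 1$.

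\textbf{Shared caveat.} The forward direction of Step 2 is wrong as written, and the paper's third inclusion asserts the same thing. A closed point $x\in U$ need not have $\tA\to\kappa(x)$ landing in $\cO_{v_x}$. For $\tA=R[T]$, $U=\A^1_K$ and $r=0$, the element $a=\pi T$ lies in $\sqrt{\pi R[T]}=\G_a(0)(U,\tU)$, yet $v_x(\bar a)=0$ at the $K$-point $T=\pi^{-1}$. So $U_{(0)}$ has to be read as the closed points of $U$ whose closure in $\tU$ meets the special fibre, i.e.\ the points of $\fU^{\rig}$ lying in $U$. This is how the lemma is used in Corollary~\ref{cor:Gar}\ref{cor:Gar1}. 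With that reading your forward direction is immediate, and your converse, which only uses such points, is exactly what is needed.
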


Before the proof we recall the following well-known statement.
\begin{lemma}\label{lem:rig-point}
Let $\tA$ be a faithfully flat $R$-algebra of finite type. 
Then there exists a finite field extension $K'/K$ and an $R$-algebra map $\tA\to R'$, which induces
a surjection $\tA\otimes_R K\to K'$, where $R'$ is the valuation ring associated to the unique extension of $v_K$ to $K'$.
\end{lemma}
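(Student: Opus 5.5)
The plan is to reduce to a closed point of the generic fiber and then show that its residue field extension is integral over $R$. First, since $\tA$ is faithfully flat of finite type over $R$, the generic fiber $\tA\otimes_R K=\tA[\tfrac1\pi]$ is a nonzero finitely generated $K$-algebra. Indeed, flatness makes $\pi$ a nonzerodivisor. Faithful flatness gives $\tA\neq 0$, and flatness over the valuation ring $R$ means $\tA$ is torsion free, so $\tA\hookrightarrow\tA[\tfrac1\pi]$ and the latter is nonzero. Choose a maximal ideal $\fm$ of $\tA[\tfrac1\pi]$. By the Nullstellensatz, $K':=\tA[\tfrac1\pi]/\fm$ is a finite field extension of $K$. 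Let $\phi:\tA\to K'$ be the composite map; it induces the surjection $\tA\otimes_R K\to K'$.

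Since $K$ is complete, $v_K$ extends uniquely to a valuation on $K'$. Let $R'$ be its valuation ring. It remains to show that $\phi(\tA)\subset R'$. Here I would use faithful flatness, or more precisely surjectivity of $\Spec\tA\to\Spec R$ on the closure of the point. The image $B:=\phi(\tA)$ is a finitely generated $R$-subalgebra of $K'$ with $B[\tfrac1\pi]=K'$. The key claim is that $B$ is integral over $R$. Given this, $B$ lies in the integral closure of $R$ in $K'$, which is contained in $R'$ because $R'$ is a valuation ring containing $R$ and is integrally closed. In fact, by uniqueness of the extension, $R'$ equals that integral closure.

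To prove integrality, I would argue as follows. Suppose some $b\in B$ satisfies $v(b)<0$. A cleaner route is to use the closed subscheme $Z=\Spec B\subset\Spec\tA$. It is integral, of finite type over $R$, and its generic fiber is a single point $\Spec K'$, so $Z$ is quasi-finite over $R$ at the generic point. The closed fiber of $Z$ may be empty. In that case $\pi$ is a unit in $B$ and $B=K'$, which is finite type over $R$. This contradicts the fact that $K$ is not of finite type over $R$ when the valuation is nontrivial. Indeed, by the Artin--Tate argument, $K'$ of finite type over $R$ would force $K$ to be of finite type over $R$, hence $K=R[\tfrac1c]$ for some $c$. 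But for a rank-one valuation ring this fails, since $R[\tfrac1c]=K$ requires $\sqrt{(c)}=\fm_R$ and then $R$ would be finite type over... The correct use here is that $R\to K$ is not of finite type only when the rank is one, and this holds.

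\textbf{Main obstacle.} The principal difficulty is that this argument only shows that the closed fiber of $Z$ is nonempty, not that $B$ is integral. My preferred route is therefore to choose the point differently. I would take a closed point $y$ of the special fiber $\tA\otimes_R k$, which is nonempty by faithful flatness. Using flatness and the going-down and dimension theory for flat finite type algebras over valuation rings (Raynaud--Gruson), I would find an irreducible component through $y$ that dominates $\Spec R$. Cutting by general sections, I would reduce to a one-dimensional relative situation: an integral closed subscheme $Z\ni y$, flat over $R$, with generic fiber a single closed point $\Spec K'$. Then $Z$ is quasi-finite and closed, and by Zariski's main theorem $B$ is finite over $R$ after localization, which gives the map $\tA\to B\to R'$. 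Making the slicing argument rigorous over a non-noetherian $R$ is the step I expect to be hardest. If the slicing proves difficult, a fallback is to pass to the $\pi$-adic completion of $\tA$ and take a point of the affinoid $\Sp(\hat{\tA}[\tfrac1\pi])$. Any such point lands in the power-bounded elements, and hence in $R'$, because the image of $\hat{\tA}$ is topologically of finite type. Algebraizing this point then uses density of algebraic points.
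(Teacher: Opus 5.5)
There is a genuine gap. Your main line fails at the ``key claim'': an arbitrary maximal ideal of $\tA[\tfrac1\pi]$ need not come from an $R'$-point, so $B=\phi(\tA)$ need not be integral over $R$. For example, take $\tA=R[x]$ and $\fm=(x-\pi^{-1})$. Then $K'=K$ and $B=R[\pi^{-1}]=K\not\subset R$.

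Your argument that the closed fiber of $Z=\Spec B$ is nonempty is also wrong. For a rank-one valuation ring, $K=R[\tfrac1\pi]$ \emph{is} of finite type over $R$, so $B=K'$ gives no contradiction, and the example shows that this case really occurs.

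Moreover, your diagnosis of the obstacle is inverted: $\pi\notin B^\times$ would already suffice. Take a prime $\fp\ni\pi$ of $B$ and a valuation ring $V$ of $K'$ dominating $B_\fp$. Then $V\cap K$ is a valuation ring containing $R$ in which $\pi$ is not invertible. Since $R$ has rank one, $V\cap K=R$, so $V=R'$ by uniqueness of the extension, and hence $B\subset R'$. No Zariski main theorem is needed.

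So the entire content of the lemma is to \emph{choose} a closed point of the generic fiber whose closure meets the special fiber, and this is exactly what your proposal does not supply. The slicing route could probably be completed: take $y$ in the Cohen--Macaulay locus of the special fiber and lift a system of parameters of the fiber at $y$. The finitely presented slicing criterion for flatness, together with openness of the quasi-finite locus, should then produce a closed point of the generic fiber specializing to $y$. As written, however, it is only a plan.

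Your fallback is precisely the paper's proof, but your sketch skips the one place where faithful flatness enters, and it misdescribes the last step. You must know that $\hat{\tA}[\tfrac1\pi]\neq 0$. This holds because $\hat{\tA}$ surjects onto $\tA/\pi\tA$, which is nonzero by faithful flatness, and $\hat{\tA}$ is $\pi$-torsion free since $\tA$ is. Hence $\hat{\tA}[\tfrac1\pi]$ is a nonzero affinoid algebra. A maximal ideal then gives a finite extension $K'/K$ and an $R$-algebra map $\hat{\tA}\to R'$ (Bosch, 8.3, Lemmas 6 and 2).

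Nothing has to be algebraized, and density of algebraic points plays no role. The composite $\tA\to\hat{\tA}\to R'$ is already the required map. The map $\tA\otimes_R K\to K'$ is onto because its image is a $K$-subspace of the finite-dimensional space $K'$, hence closed, and it is dense since $\tA$ is dense in $\hat{\tA}$. Equivalently, generators of $\tA$ topologically generate $\hat{\tA}$, so their images generate $K'$ over $K$.
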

\begin{proof}
As $\tA$ is a faithfully flat $R$-algebra it has no $\pi$-torsion and $\tA/\pi \tA$ is not the zero ring.
Hence $\hat{\tA}=\varprojlim_{n}(\tA/\pi^n\tA)$ is not the zero ring as it surjects onto $\tA/\pi \tA$. 
Moreover, $\hat{\tA}$ has no $\pi$-torsion.\footnote{If $a=(a_n \text{ mod } \pi^nA)_n\in \hat{\tA}$ is $\pi$-torsion, 
then $a_{n}=\pi^{n-1}b_n$, hence $a=0$.} Thus $\hat{\tA}\otimes_R K$ is a non-zero affinoid $K$-algebra and therefore 
has a maximal ideal. By \cite[8.3, Lemma 6 and Lemma 2]{Bosch-RigGeom} we get a finite field extension $K'/K$ 
and an $R$-algebra map $\hat{\tA}\to R'$ which induces a surjection after $\otimes_R K$. 
We get the map from the statement by precomposing with $\tA\to\hat{\tA}$. 
As generators of the $R$-algebra $\tA$ also topologically generate $\hat{\tA}$, 
their residue classes generate the finite field extension $K'/K$ and thus $\tA\otimes K\to K'$ is surjective. 
\end{proof}

\begin{proof}[Proof of  Lemma \ref{lem:Gar3}.]
As $\tU[\tfrac{1}{\pi}]$ is normal, so is $U$ and the integral closure $\tU^{\rm int}$ of $\tU$ in $U$
coincides with the normalization of $\tU$ and with the integral closure of $\tU$ in $\tU[\tfrac{1}{\pi}]$.
By Lemma \ref{lem:coh}, the $R$-scheme $\tU^{\rm int}$ is again  faithfully flat and of finite presentation.
As $\Gar(U,\tU)=\Gar(U,\tU^{\rm int})$ and $\Gar(\tU[\tfrac{1}{\pi}],\tU)=\Gar(\tU[\tfrac{1}{\pi}],\tU^{\rm int})$, 
see \ref{def;tametopology-intro}\ref{def;tametopology-intro4},
we can therefore assume $\tA=\tA^{\rm int}$ and  furthermore that $\tA$ is a domain.
From the definition, we get inclusions
\begin{multline*}
 \left\{a\in \tA[\tfrac{1}{\pi}] \mid \tfrac{a^m}{\pi^n}\in \sqrt{\pi \tA}\right\} \subset
 \Gar(\tU[\tfrac{1}{\pi}],\tU)\subset \Gar(U,\tU)\\
 \subset  \left\{a\in \tA[\tfrac{1}{\pi}]\mid  v_x(\bar{a})>r, \text{ for all }x\in U_{(0)}\right\},
\end{multline*}
where the second inclusion is induced by restriction, 
and for the third inclusion we use \eqref{def;Gar2}, $v_x(\pi)=1$, and  
that a map $\Spec K(x)\to U$ extends to $\Spec \sO_{v_x}\to \tU$, as $\tU$ is faithfully flat  over $\Spec R$, 
see \cite[8.3, Lemma 6 and Lemma 2]{Bosch-RigGeom} (cf. the proof of Lemma \ref{lem:rig-point}).
Thus it remains to prove:
\begin{claim}\label{lem:Gar3-claim}
Assume $a\in \tA[\tfrac{1}{\pi}]$ satisfies  
\eq{lem:Gar3-claim1}{a^m/\pi^n\not\in \sqrt{\pi\tA}.} 
Then there exists a closed point $x\in U$ such that $v_x(a)\le r$.
\end{claim}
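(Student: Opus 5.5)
Write $b:=a^m/\pi^n\in \tA[\tfrac1\pi]$; the claim says that if $b\notin\sqrt{\pi\tA}$ then there is a closed point $x\in U$ with $v_x(b)\le 1$, i.e.\ $v_x(a)\le r$. The plan is to first locate a prime of the special fibre where $b$ fails to be topologically nilpotent. Then lift it to a rigid point, and finally move that rigid point into $U$ using density and the openness of the condition. Recall that we have reduced to $\tA=\tA^{\rm int}$ a normal domain, faithfully flat of finite type over $R$.

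\emph{Step 1.} We may assume $b\in\tA$. If $b\notin \tA$, then, $\tA$ being integrally closed in $\tA[\tfrac1\pi]$, $b$ is not integral over $\tA$. Consider the subring $\tA[b]$, or equivalently $\tA[1/b]$ localized appropriately. By the valuative criterion for integral closure there is a valuation ring of $\Frac(\tA)$ dominating a localization of $\tA$ on which $v(b)<0$. Alternatively, replace $b$ by $b^{-1}$ on the open set where it makes sense; the resulting points then have $v_x(b)<0\le 1$.

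For $b\in\tA$, the condition $b\notin\sqrt{\pi\tA}$ gives a prime $\fp\supset\pi\tA$ with $b\notin\fp$. Thus $\tA_b/\pi\neq 0$, and $\tA_b$ (the localization, an $R$-flat algebra of finite type) is faithfully flat over $R$.

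\emph{Step 2.} Apply Lemma \ref{lem:rig-point} to $\tA':=\tA[1/b]$ in the first case, and to $\tA[b^{-1}]$-type localizations in the non-integral case. This gives a finite extension $K'/K$ and $\tA'\to R'$ surjecting $\tA'\otimes K\to K'$. It produces a closed point $x\in \tU[\tfrac1\pi]$ at which $b$ is a unit in $R'$, so $v_x(b)=0\le 1$.

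\emph{Step 3 (main obstacle).} The point $x$ need not lie in $U$. To fix this, use that $U$ is dense. The set of rigid points of $\Sp(\hat{\tA'}[\tfrac1\pi])$ where $|b|=1$ is an affinoid, hence an admissible open $W$ of the normal rigid space $\fU^{\rig}$, nonempty by Step 2. Its intersection with the analytification of the complement $\tU[\tfrac1\pi]\setminus U$, a nowhere dense Zariski closed subset of $X$, cannot contain all of $W$. Indeed, $W$ is a nonempty open subset of the irreducible-component rigid space, so it is Zariski dense in a component. To make this rigorous, choose $g\in\tA$ nonzero vanishing on the complement and work with $\tA[1/(bg)]$ instead. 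The issue is that $g$ may lie in $\sqrt{\pi\tA}$ on the relevant component, which would make $\tA[1/(bg)]/\pi=0$. The fix is to rescale $g$ by a power of $\pi$: any nonzero function on the affinoid $W$ has nonzero sup norm, and the maximum modulus principle gives a rigid point of $W$ where $|g|$ equals its sup, and in particular $g\neq 0$. That point is a closed point of $\tU[\tfrac1\pi]$ lying in $U$ with $v_x(b)\le 0\le 1$, which proves the claim.
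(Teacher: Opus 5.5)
Your plan has the same shape as the paper's proof: find a faithfully flat finite type $R$-model on whose rigid points $|b|\ge 1$, then move such a point into $U$. Replacing the paper's normalization $f_1=\lambda^{-1}f^e$ by a case split that uses normality ($\tA_b$ if $b\in\tA$, $\tA[1/b]$ if $b\notin\tA$) is a legitimate variant. However, the step you yourself call the main obstacle is not proved.

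In Step~3 everything hinges on $g$ being non-nilpotent in $\cO(W)$. If $|g|_{\sup,W}=0$, neither rescaling by powers of $\pi$ nor the maximum modulus principle produces a point of $W$ with $g\neq 0$. Your sentence ``any nonzero function on the affinoid $W$ has nonzero sup norm'' presupposes $g|_W\neq 0$. Your density remark only says that $W$ is Zariski dense in some irreducible component $Y$ of $\fU^{\rig}$. It does not rule out $Y\subset V(g)^{\mathrm{an}}$, which is precisely the point; it amounts to the rigid points of $W$ being Zariski dense in $\Spec A$.

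One way to close this: let $x\in W$ be the point from Step~2, and $\fm$ its maximal ideal in $A':=\tA'[\tfrac{1}{\pi}]=A_b$. The generators of $\tA'$ have power-bounded images in the finite-dimensional $K$-algebra $A'/\fm^N$, because their values at $x$ lie in $R'$. Hence $A'\to A'/\fm^N$ extends continuously to $\cO(W)=\widehat{\tA'}[\tfrac{1}{\pi}]\to A'/\fm^N$. If $g^M=0$ in $\cO(W)$, then $g^M\in\bigcap_N \fm^N A'_{\fm}=0$ by Krull's intersection theorem, contradicting $g\neq 0$ in the domain $A$. The paper's step ``replacing $A$ by $B$ and $f$ by $h$'' tacitly uses the same non-nilpotence of $\hat h$.

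Step~1 also has a hole when $b\notin\tA$. A valuation ring of $\Frac(\tA)$ on which $v(b)<0$ is not a closed point $x\in U$ with its rank-one valuation $v_x$. Moreover, Lemma~\ref{lem:rig-point} applies to $\tA[1/b]$ only once you know that $\pi$ is not a unit there, which you never check. The naive argument via $\pi\alpha/b^\nu=1$ with $\alpha\in\tA$ fails, since elements of $\tA[1/b]$ are polynomials in $b^{-1}$ over $\tA$ and $b\notin\tA$.

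Your valuation does supply the missing fact. Since $b$ is not integral over $\tA$, there is a valuation ring $V\supseteq \tA$ with $b\notin V$. As $\pi^k b\in \tA\subset V$ for some $k$, $\pi$ cannot be a unit of $V$. Also $b^{-1}\in V$, so $\tA[1/b]\subset V$, and therefore $\pi$ is not a unit in $\tA[1/b]$. Lemma~\ref{lem:rig-point} then gives $v_x(b)\le 0$ (not $<0$).

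Finally, $v_x(a)\le r$ is equivalent to $v_x(b)\le 0$, not to $v_x(b)\le 1$. Since you actually produce $v_x(b)\le 0$, this slip is harmless.
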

We prove the claim. 
Set $A:=\tA[1/\pi]$ and $f:=a^m/\pi^n \in A$. 
We want to spread out $A[1/f]$ to a faithfully flat $R$-algebra of finite type.
To this end, set $\tC:=\varprojlim \tA/\pi^n\tA$, and $C:=\tC[1/\pi]$ and 
denote by $|-|_{\rm sup}$ the supremum norm on the affinoid $K$-algebra $C$,
and by  $\hat{f}$ the image of $f$ under the natural map $A\to C$.
By \eqref{lem:Gar3-claim1} $\hat{f}$ is not nilpotent in $C$ 
and hence $|\hat{f}|_{\rm sup}\neq 0$, by \cite[3.1, Proposition 10]{Bosch-RigGeom}.
Thus, by \cite[3.1, Proposition 16]{Bosch-RigGeom} there exist $e\ge 1$ and $\lambda\in K^\times$ with
$|\lambda|=|\hat{f}^e|_{\rm sup}$. Set $f_1:=\lambda^{-1} f^e\in A$. Thus $|\hat{f}_1|_{\rm sup}=1$.  
Denote the $\tA$-subalgebra of $A$ generated by $1/f_1$  by $\tB:=\tA[1/f_1]$ and set $\tW:=\Spec \tB$. 
Recall from the beginning of the proof that  $\tU$ is flat and of finite type over $R$ (as $\tA\subset \tA[1/\pi]$ it has no $R$-torsion and hence is flat) and thus the open subscheme $\tW$ is flat and of finite type over $R$ as well. 
Also note  $\tB[1/\pi]=A[1/f]$.
Moreover, $\tW$ also surjects onto $\Spec R$. Indeed, else $\pi$ is a unit in $\tB$, 
i.e., $\pi\cdot \tfrac{\alpha}{f_1^\nu}=1$ in $\tB$, for some $\alpha\in\tA$ and $\nu \ge 0$, 
hence $f_1\in \sqrt{\pi\tA}$.
But then $(f_1^n)_n$ would $\pi$-adically converge to zero, contradicting 
$|\hat{f}_1^n|_{\rm sup}=|\hat{f}_1|^n_{\rm sup}=1$. Thus $\tW$ is integral and faithfully flat  of finite type over $R$. 

Furthermore, $W=\Spec B$, with $B=\tB[1/\pi]=A[1/f]$, is open dense in $U$.
Thus we find a non-zero element $h\in B$, such that $\Spec B[1/h]\subset W\cap U$. 
Now replacing in the above discussion $A$ by $B$ and $f$ by $h$, shows that up to replacing $h$ by $\mu^{-1}h^j$, 
for some $\mu\in K^\times$, we can assume that $\tB[1/h]$, 
the $\tB$-subalgebra of $B$ generated by $1/h$, is faithfully flat of finite type over $R$.

By Lemma \ref{lem:rig-point} we find a closed point $x\in \Spec B[1/h]\subset W\cap U$ with residue field 
$K(x)$ finite over $K$, which induces a morphism
$\tB[1/h]\to R'$ with $R'$ the valuation ring corresponding to $v_x$ the extension of $v_K$ to $K(x)$.
As in particular $1/f_1\in \tB$, we find $v_x(\bar{f}_1)\le 0$. 
We have $|\lambda|=|\hat{f}^e|_{\rm sup}\ge 1$, where the equality holds by the choice of $e$ and $\lambda$, and  
the inequality holds by \eqref{lem:Gar3-claim1}. 
Thus, we find  
\[ev_x(\bar{f})\le v_x(\lambda)=v_K(\lambda)\le 0,\] 
where the first inequality is implied by $v_x(\bar{f_1})\le 0$ and the last inequality holds by applying $\log_c$ to $|\lambda|\ge 1$.
As $e \ge 1$ and $f=a^m/\pi^n$, Claim \ref{lem:Gar3-claim} follows from the fact that $v_x(\pi)=v_K(\pi)=1$.
This completes the proof of Lemma \ref{lem:Gar3}.
\end{proof}
 
\begin{lemma}\label{lem:Gar-shift}
Let the situation be as in \ref{para:setup} above.
Assume $X$ is normal. 
Let $U\subset X$ be a quasi-compact dense open subset.
\begin{enumerate}[label=(\arabic*)]
    \item\label{lem:Gar-shift1} $\Gar_{(U,\tX)}=\Gar_{(X,\tX)}$, for $r\in \Q$.
    \item\label{lem:Gar-shift2}
    Multiplication by $\pi^n$ ($n\in \Z$) induces an isomorphism of $\sO_{\tX_{\et}}$-modules
    \[\G_a(0)_{(U,\tX)}\xrightarrow{\simeq} \G_a(n)_{(U,\tX)}.\]
    \item\label{lem:Gar-shift3} In case the ordered group $v_K(K^\times)$ is divisible, 
    multiplication by $\pi^r$ ($r\in \Q$) induces an isomorphism of $\sO_{\tX_{\et}}$-modules
    \[\G_a(0)_{(U,\tX)}\xrightarrow{\simeq} \Gar_{(U,\tX)}.\] 
    \end{enumerate}
    \end{lemma}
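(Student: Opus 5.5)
All three parts reduce to checking equalities of sections on a basis of $\tX_{\et}$. By \ref{para:tame-qcoh}, the sheaves in question are the sheaves on $\tX_{\et}$ given by $\tV\mapsto \Gar(U\times_{\tX}\tV,\tV)$, and likewise with $X$ in place of $U$. So it suffices to compare these groups for $\tV=\Spec\tA$ affine and étale over $\tX$. Since $\tV\to\tX$ is étale, $\tV[\tfrac1\pi]=X\times_{\tX}\tV$ is normal. Let $\tV^{\rm int}$ be the integral closure. Sections of tame sheaves only see $\tV$ through the $\Spa$-data, so they are insensitive to replacing $\tV$ by $\tV^{\rm int}$ (a quasi-modification, \ref{def;tametopology-intro}\ref{def;tametopology-intro4}). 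Hence we may restrict to those $\tV$ that are surjective over $\Spec R$; on components not meeting the special fibre, $\pi$ is a unit and the claims are trivial.

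For \ref{lem:Gar-shift1}, apply Lemma \ref{lem:Gar3} twice: once to $(U\times_{\tX}\tV,\tV)$ and once to $(X\times_{\tX}\tV,\tV)$. Here $U\times_{\tX}\tV$ is quasi-compact and dense in $\tV[\tfrac1\pi]$ because $U$ is dense in $X$ and étale maps are open. Lemma \ref{lem:Gar3} identifies both groups with $\Gar(\tV[\tfrac1\pi],\tV)=\{a\in\tA[\tfrac1\pi]\mid a^m/\pi^n\in\sqrt{\pi\tA^{\rm int}}\}$. This description does not depend on the open subset, and the identification is compatible with the restriction maps, so the restriction map $\Gar_{(X,\tX)}\to\Gar_{(U,\tX)}$ is an isomorphism.

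For \ref{lem:Gar-shift2} and \ref{lem:Gar-shift3}, take $r=n/m$, and if $r\notin\Z$ assume $v_K(K^\times)$ is divisible, so that we can choose $\pi^r\in K$ with $v_K(\pi^r)=r$. Using the middle description in Lemma \ref{lem:Gar3}, we have $a\in\G_a(0)(U,\tV)$ iff $v_x(\bar a)>0$ for all closed points $x\in U\times_{\tX}\tV$. Since $v_x(\pi^r)=r$, this holds iff $v_x(\overline{\pi^ra})>r$ for all such $x$, i.e. iff $\pi^ra\in\Gar(U,\tV)$. Multiplication by $\pi^r$ is a bijection on $\tA[\tfrac1\pi]$ and is $\sO_{\tV}$-linear, so it gives an isomorphism of $\sO_{\tX_{\et}}$-modules, compatible with étale restriction. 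Alternatively, one can argue directly from Definition \ref{def;Gar}: for every tame $(L,w)$ with $w(\pi)>0$, $w(\bar a)>0$ iff $w((\pi^r\bar a)^m/\pi^n)>0$ after normalising $w(\pi^r)=r\,w(\pi)$. In the case $w(\pi)=0$ both conditions read $w(\bar a)\ge0$. This argument avoids Lemma \ref{lem:Gar3} altogether and shows the statement already at the level of tame sheaves.

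The main obstacle is making the hypotheses of Lemma \ref{lem:Gar3} hold on a cofinal family of étale opens, in particular surjectivity onto $\Spec R$ and normality of the generic fibre. I would first handle this via the direct valuation argument for \ref{lem:Gar-shift2} and \ref{lem:Gar-shift3}, and, for \ref{lem:Gar-shift1}, by splitting $\tV$ into its components lying over the generic point versus those meeting the special fibre.
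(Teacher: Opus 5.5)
Your proof is correct. Part \ref{lem:Gar-shift1} is exactly the paper's argument; the paper's proof is the single line ``this follows from Lemma~\ref{lem:Gar3}''. The input is the first equality of Lemma~\ref{lem:Gar3}, $\Gar(U',\tV)=\Gar(\tV[\tfrac1\pi],\tV)$ for every quasi-compact dense open $U'\subset\tV$. You apply it on affine \'etale $\tV$, after the harmless splitting-off of components that miss the special fibre.

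For \ref{lem:Gar-shift2} and \ref{lem:Gar-shift3} your preferred route is genuinely different. Instead of reading the rescaling off the explicit description of sections, you check that multiplication by $\pi^r$ maps $\G_a(0)_w$ bijectively onto $\Gar_w$ for every tame $(L,w)$. This uses $(\pi^r)^m/\pi^n\in R^\times$, so that $w((\pi^r\bar a)^m/\pi^n)=m\,w(\bar a)$, together with $w(\pi^r)=0$ when $w(\pi)=0$. What this buys:
\begin{itemize}
\item It gives $\G_a(0)\cong\Gar$ as sheaves on all of $(X,\tX)_t$.
\item It needs neither normality of $X$ nor the density and surjectivity hypotheses.
\item It is independent of Lemma~\ref{lem:Gar3}.
\end{itemize}
The paper's route is a one-liner once Lemma~\ref{lem:Gar3} is available, but it only works on \'etale affines where that lemma applies.

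One caution about your first argument for (2),(3): the middle description in Lemma~\ref{lem:Gar3}, taken literally over all closed points of $U$, is not right. Take $\tV=\A^1_R=\Spec R[T]$. Then $\pi T\in\pi R[T]\subset\sqrt{\pi R[T]}=\G_a(0)(\A^1_K,\A^1_R)$. Yet at the closed point $T=\pi^{-2}$ of $\A^1_K$ one gets $v_x(\pi T)=-1$. This $K$-point does not extend to an $R$-point of $\tV$. Only closed points whose $K(x)$-point extends to an $\sO_{v_x}$-point of $\tV$ should be used; these are exactly the points that the proof of the Claim in Lemma~\ref{lem:Gar3} produces.

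Your rescaling does not depend on which set of points is used. It also works verbatim with the last description of Lemma~\ref{lem:Gar3}, since $(\pi^r a)^m/\pi^n$ is a unit multiple of $a^m$. So it is better to phrase it that way, or simply to rely on your direct tame-level argument.
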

\begin{proof}
    This follows directly from Lemma \ref{lem:Gar3}.
\end{proof}

\begin{corollary}\label{cor:Gar} 
Let the situation be as in \ref{para:setup} above.
Assume $X$ is  normal. 
\begin{enumerate}[label=(\arabic*)]
    \item\label{cor:Gar0} The sheaf $\G_a(0)_{(X,\tX)}$ is a quasi-coherent $\sO_{\tX_{\et}}$-module.
    \item\label{cor:Gar1}  Let $\fXrig$ be the rigid space associated to the formal completion of $\tX$ along $V(\pi\sO_{\tX})$. 
            Then there is a canonical isomorphism of $\cO^{\circ}_{\fXrig}$-modules
            \[\G_a(0)_{\fXrig}\cong \sO(1),\]
            where  the left hand side is defined as in \eqref{eq:rig2}
            and  the right hand side is the $\sO^\circ_{\fXrig}$-module from Definition \ref{defn:Or}.    
    \item\label{cor:Gar2} Assume $R$ is a complete discrete valuation ring.
    Then the sheaf $\G_a(0)_{(X, \tX)}$ is a coherent $\sO_{\tX_{\et}}$-module.
    \item\label{cor:Gar3} Assume $K$ is algebraically closed. Then, $\G_a(0)_{(X,\tX)_{\Zar}}$ 
    is an almost coherent $\sO_{\tX}$-module in the sense of \cite[Definition 4.1.10]{Zavyalov}.
    \end{enumerate}
\end{corollary}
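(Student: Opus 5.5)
The plan is to derive all four parts from the explicit description of sections in Lemma~\ref{lem:Gar3}, namely
\[\G_a(0)(U,\tU)=\{a\in \tA[\tfrac1\pi]\mid a\in \sqrt{\pi\tA^{\rm int}}\}.\]
This holds for affine $\tU=\Spec\tA$ étale over $\tX$ (so $\tA[1/\pi]$ is normal, since $X$ is normal). By Lemma~\ref{lem:Gar-shift}\ref{lem:Gar-shift1} we may also take $U=\tU[1/\pi]$.

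For \ref{cor:Gar0}, we must show that on an affine étale $\tU=\Spec\tA\to\tX$ the module $M(\tA):=\sqrt{\pi\tA^{\rm int}}$ is compatible with étale localization: for $\tA\to\tA'$ étale we want $M(\tA')=M(\tA)\otimes_{\tA}\tA'$. Normalization commutes with étale base change, so $\tA'^{\rm int}=\tA^{\rm int}\otimes_{\tA}\tA'$. The radical of $\pi\tA^{\rm int}$ is the intersection of the primes over $\pi$; since $\tA^{\rm int}/\pi\to \tA'^{\rm int}/\pi$ is étale, reducedness is preserved, so $\sqrt{\pi}$ base changes as required. Together with $\tA^{\rm int}$ being finite over $\tA$ (Lemma~\ref{lem:coh}), this shows $\G_a(0)_{(X,\tX)_\et}$ is the quasi-coherent sheaf attached to the $\sO_{\tX}$-module $\sqrt{\pi\sO_{\tX^{\rm int}}}$.

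For \ref{cor:Gar2}, if $R$ is a cdvr then $\tX^{\rm int}$ is noetherian and finite over $\tX$, so the ideal $\sqrt{\pi\sO_{\tX^{\rm int}}}$ is coherent on $\tX^{\rm int}$; pushing forward along a finite morphism gives coherence on $\tX$. For \ref{cor:Gar3}, with $K$ algebraically closed, I would show that the ideal $J=\sqrt{\pi\sO_{\tX^{\rm int}}}$ satisfies $\fm J\subset J_\epsilon\subset J$, where $J_\epsilon$ is the finitely generated ideal generated by $\pi^{\epsilon}$ and lifts of generators of the nilradical of $\sO_{\tX^{\rm int}}/\pi^{\epsilon}$. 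More simply, I would try to show directly that for each $\epsilon\in\Q_{>0}$ there is a finitely generated $J_\epsilon\subset J$ with $\pi^\epsilon J\subset J_\epsilon$. Since $\sO_{\tX^{\rm int}}$ is coherent by Lemma~\ref{lem:coh}, this matches Zavyalov's criterion for almost finite generation. The key input is Lemma~\ref{lem:Gar3} with $r$ varying: $\pi^{\epsilon}\G_a(0)\subset\G_a(\epsilon)$, and $\G_a(\epsilon)\subset\G_a(0)$ is contained in a finitely generated submodule. For the latter I would use the finiteness of $\sO^\circ/\sO(1)$ type quotients, in the form that $\{a: v(a)\ge\epsilon\}$ on the affinoid is generated by $\pi^\epsilon\tA^{\rm int}$ up to finitely many elements. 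This is the main obstacle, and I would argue it via the reducedness of affinoid algebras over algebraically closed $K$ (the reduced fibre theorem) if needed.

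For \ref{cor:Gar1}, I would use Lemma~\ref{lem:sectionsFrig}: $\G_a(0)_{\fXrig}$ is the sheafification of
\[\fU^{\rig}\mapsto \sqrt{\pi\tA^{\rm int}}\otimes_{\tA^{\rm int}}\cO^\circ(\fU^{\rig}).\]
By Lemma~\ref{lem:Orig-flat-Ot} (flatness, and the isomorphism $\cO^\circ(\fU^{\rig})=\widehat{\tA^{\rm int}}$ since $\tX^{\rm int}$ is finite), this tensor product is the ideal of $\cO^\circ(\fU^{\rig})$ generated by $\sqrt{\pi\tA^{\rm int}}$. We must compare it with $\cO(1)(\fU^{\rig})=\{f:|f|_{\sup}<1\}$, which is the radical of $\pi\cO^\circ$. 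The inclusion of the former in the latter is clear. For the reverse, I would argue sheaf-locally after passing to admissible blowups (cofinal in $\Sigma_{\tX}$). Given $f$ with $|f|_{\sup}<1$, some power $f^N$ lies in $\pi\cO^\circ$. Approximating $f$ $\pi$-adically by $a\in\tA^{\rm int}$ with $f-a\in\pi\cO^\circ$, we get $a^N\in\pi\widehat{\tA^{\rm int}}\cap\tA^{\rm int}=\pi\tA^{\rm int}$, and hence $f\in a+\pi\cO^\circ\subset\sqrt{\pi\tA^{\rm int}}\cdot\cO^\circ$. Compatibility of these identifications with restrictions then gives the canonical isomorphism.
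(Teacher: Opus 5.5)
Your arguments for \ref{cor:Gar0} and \ref{cor:Gar2} are the paper's. Your argument for \ref{cor:Gar1} is correct and slightly more economical than the paper's. You work with the radical description $\sqrt{\pi\tA^{\rm int}}$ from Lemma~\ref{lem:Gar3}, the identification $\cO^\circ(\fU^{\rig})=\widehat{\tA^{\rm int}}$ from Lemma~\ref{lem:Orig-flat-Ot}, and $\pi\widehat{\tA^{\rm int}}\cap\tA^{\rm int}=\pi\tA^{\rm int}$. This avoids the paper's matching of closed points of $U$ with points of $\fU^{\rig}$. The maximum principle then enters only through $\cO(1)(\fU^{\rig})=\sqrt{\pi\cO^\circ(\fU^{\rig})}$.

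The gap is in \ref{cor:Gar3}. The decisive step is that $\pi^{\epsilon}\G_a(0)(V,\tV)$ lies in a finitely generated $\tA$-submodule of $J:=\G_a(0)(V,\tV)=\sqrt{\pi\tA^{\rm int}}$. You leave this as ``the main obstacle'', to be handled via finiteness of $\cO^\circ/\cO(1)$-type quotients or the reduced fibre theorem. None of that is carried out, and none of it is needed. The missing observation is simply $J\subseteq\tA^{\rm int}$, which gives, for every $N\ge 1$,
\[\pi^{1/N}J\subseteq \pi^{1/N}\tA^{\rm int}=\{a\in A\mid a^N\in\pi\tA^{\rm int}\}\subseteq J.\]
The middle equality holds because for such $a$ the element $a/\pi^{1/N}\in A$ is integral over $\tA^{\rm int}$, which is integrally closed in $A$. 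So $\pi^{1/N}\tA^{\rm int}$ is free of rank one over $\tA^{\rm int}$, hence finite and $\pi$-torsion free over $\tA$ by Lemma~\ref{lem:coh}. It is therefore almost finitely presented, and since $\tA$ is coherent, almost coherent. The cokernel of $\pi^{1/N}\tA^{\rm int}\hookrightarrow J$ is killed by $\pi^{1/N}$, hence by any given finitely generated $\fm_0\subseteq\fm\tA$ once $N$ is large. Zavyalov's Lemma 2.6.5 then gives almost coherence of $J$ directly as an $\tA$-module. This also settles the passage from $\sO_{\tX^{\rm int}}$ to $\sO_{\tX}$, which you only gesture at.

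Your first formulation should be discarded. A single finitely generated $J_\epsilon$ with $\fm J\subseteq J_\epsilon$ is stronger than almost finite generation. It already fails for $\tA=R$, $J=\fm$, since $|K^\times|$ is dense. Moreover, the nilradical of $\sO_{\tX^{\rm int}}/\pi^\epsilon$ contains every $\pi^{\epsilon/N}$ and is in general not finitely generated.
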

\begin{proof}
\ref{cor:Gar0} Let $\tW=\Spec\tB\to \tV=\Spec \tA$ be a map of \'etale  $\tX$-schemes and set $W=\tW\times_{\tX} X$ and $V=\tV\times_{\tX} X$. We have to show that the natural map
\eq{cor:Gar3.1}{\G_a(0)(V, \tV)\otimes_{\tA} \tB\to \G_a(0)(W, \tW)}
is an isomorphism. 
By Lemma \ref{lem:Gar3}, 
\[\G_a(0)(V, \tV)=\sqrt{\pi \tA^{\rm int}}\quad \text{and}\quad \G_a(0)(W, \tW)=\sqrt{\pi \tB^{\rm int}},\]
where $\tA^{\rm int}$ is the integral closure of $\tA$ in $\tA[\tfrac{1}{\pi}]$
and $\tB^{\rm int}$ is the integral closure of $\tB$ in $\tB[\tfrac{1}{\pi}]=\tB\otimes_{\tA} \tA[\tfrac{1}{\pi}]$.
By \cite[\href{https://stacks.math.columbia.edu/tag/03GE}{Lemma 03GE}]{stacks-project},
\eq{cor:Gar3.5}{\tB^{\rm int}=\tA^{\rm int}\otimes_{\tA} \tB.} 
In particular the vertical maps in the following diagram are \'etale
\[\xymatrix{
\tB^{\rm int}\ar[r] & \frac{\tB^{\rm int}}{\pi \tB^{\rm int}}\ar[r] & \frac{\tB^{\rm int}}{\sqrt{\pi A^{\rm int}}\cdot \tB^{\rm int}}\\
\tA^{\rm int}\ar[r]\ar[u] & \frac{\tA^{\rm int}}{\pi \tA^{\rm int}}\ar[r]\ar[u] & \frac{\tA^{\rm int}}{\sqrt{\pi A^{\rm int}}}.\ar[u]
}\]
Thus the ring in the upper right corner is reduced and hence 
\[\sqrt{\pi \tB^{\rm int}}=\sqrt{\pi A^{\rm int}}\cdot \tB^{\rm int}=
\sqrt{\pi A^{\rm int}}\otimes_{\tA^{\rm int}} \tB^{\rm int} = \sqrt{\pi A^{\rm int}}\otimes_{\tA} \tB, \]
where the second equality holds by flatness of  the map $\tA^{\rm int}\to \tB^{\rm int}$ 
and the third equality by \eqref{cor:Gar3.5}.
Hence \eqref{cor:Gar3.1} is an isomorphism.

\ref{cor:Gar1} It suffices to show that both sides are equal when evaluated on 
affinoid subdomains $\fU^{\rig}=\Sp(\hat{\tA}[\tfrac{1}{\pi}])$,
where $\fU=\Spf \hat{\tA}$ is the completion along $V(\pi \tA)$ of 
$\tU=\Spec \tA\subset \tX'$, for some $\tX'\in \Sigma_{\tX}$. 
Set $B:=\sO_{\fXrig}^o(\fU^{\rig})$ and denote by $\tA^{\rm int}$ the integral closure of $\tA$ in  $A=\tA[\tfrac{1}{\pi}]$.
Let $U=\Spec A$. By Lemma \ref{lem:sectionsFrig} and Lemma \ref{lem:Gar3}, the sheaf $\G_a(0)_{\fXrig}$ is  associated to
the presheaf which on affinoid subdomains $\fU^{\rig}$ as above is given by
\[\G_a(0)(U,\tU)\otimes_{\tA^{\rm int}} B= 
\left\{a\in A\mid  v_x(\bar{a})>0, \text{ for all }x\in U_{(0)}\right\} \otimes_{\tA^{\rm int}} B,\]
where $\bar{a}$ denotes the various images of $a$ in the residue fields $K(x)$,  for $x\in U_{(0)}$.
The natural surjection $A\to K(x)$ is continuous in the $\pi$-adic topology and as $K(x)$ is  complete,
it factors via $A\to \hat{\tA}[\tfrac{1}{\pi}]$, hence $x$ defines a point $\hat{x}\in \fU^{\rig}$. 
On the other hand a point $y\in \fU^{\rig}$ defines a surjection 
$\hat{\tA}[\tfrac{1}{\pi}]\to K(y)$  with $K(y)$ finite over $K$.  
As generators of the $R$-algebra $\tA$ topologically generate $\hat{\tA}$, they also 
generate the finite field extension $K(y)/K$. Hence the composition 
$A\to \hat{\tA}[\tfrac{1}{\pi}]\to K(y)$ is surjective as well.
This defines a closed point $y_0\in U$.
It is clear that $\widehat{(y_0)}=y$ and $(\hat{x})_0=x$. Thus we get a bijection 
$U_{(0)}\overset{1:1}{\longleftrightarrow}\fU^{\rig}$ 
and hence
\[\G_a(0)(U,\tU)\otimes_{\tA^{\rm int}} B= 
\left\{a\in A \mid  v_x(\bar{a})>0, \text{ for all }x\in \fU^{\rig}\right\} \otimes_{\tA^{\rm int}} B.\]
The natural map $\tA^{\rm int}\to B$ is flat, by Lemma \ref{lem:Orig-flat-Ot}, therefore the inclusion \[
\left\{a\in A \mid  v_x(\bar{a})>0, \text{ for all }x\in \fU^{\rig}\right\}\subseteq \tA^{\rm \int}
\]
stemming from the last equality in Lemma \ref{lem:Gar3}, 
gives an injective map  induced by the multiplication
\begin{equation}\label{eq:inj-partial}
\left\{a\in A \mid  v_x(\bar{a})>0, \text{ for all }x\in \fU^{\rig}\right\} \otimes_{\tA^{\rm int}} B\inj B,
\end{equation}
Note that for $a\in A$ with $v_x(\bar{a})>0$, for all $x\in \fU^{\rig}$, and 
$b\in B$ the product $\hat{a}b\in B$, with $\hat{a}$ the image of $a$ under $A\to \hat{\tA}[\tfrac{1}{\pi}]$, 
satisfies\[    
|\hat{a}b|_{\rm sup}\le |\hat{a}|_{\rm sup} |b|_{\rm \sup}\le |\hat{a}|_{\rm sup}= c^{v_x(\bar{a})}< 1, \quad 
\text{for some } x\in \fU^{rig}.
\]
Here we use the Maximum Principle to guarantee the existence of an $x$ 
such that the last equality is satisfied. 

Thus the map \eqref{eq:inj-partial} factors through
\eq{cor:Gar4}{
\G_a(0)(U,\tU)\otimes_{\tA^{\rm int}} B\inj 
\cO(1)(\fU^{\rig})=\{b\in \hat{\tA}[\tfrac{1}{\pi}]\mid |b|_{\rm sup}<1\}\subset B.}
It remains to show that this map is surjective. 
To this end, let $b\in \cO(1)(\fU^{\rig})$. We find $a\in A$ and $c\in \hat{\tA}$, such that 
$b=\hat{a}+\pi c_1$, where $\hat{a}\in \hat{\tA}[\tfrac{1}{\pi}]$ is the image of $a$ under the natural map 
$A\to \hat{\tA}[\tfrac{1}{\pi}]$ and $c_1$ is the image of $c$ under $\hat{\tA}\to B$. 
As $|b|_{\rm sup}<1$ and $|\pi c|_{\rm sup}<1$ we get $|\hat{a}|_{\rm sup}<1$. 
Thus for all $x\in U_{(0)}=\fU^{\rig}$ we have $v_x(\bar{a})=v_x(\ol{\hat{a}})>0$. 
Thus $a\in \G_a(0)(U,\tU)$, by Lemma \ref{lem:Gar3}. Hence $b$ is the image of 
\[a\otimes 1 + \pi\otimes c_1\in \G_a(U,\tU)\otimes_{\tA^{\rm int}} B \]
and \eqref{cor:Gar4} is surjective.
In view of Lemma \ref{lem:sectionsFrig}  the isomorphism \eqref{cor:Gar4}  induces  
the isomorphism $\G_a(0)_{\fXrig}\xrightarrow{\simeq} \cO(1)$ from the statement.

\ref{cor:Gar2} Let $(V, \tV)=(\Spec \tA[\tfrac{1}{\pi}], \Spec \tA)$ with $\tV\to \tX$ \'etale.
As $\tA$ is noetherian and $\tA^{\rm int}$ is a finite $\tA$-module, 
see Lemma \ref{lem:coh}, the $\tA$-module $\tA^{\rm int}$ is in fact noetherian. 
Hence $\G_a(0)(V,\tV)=\sqrt{\pi\tA^{\rm int}}$ is a finitely generated $\tA$-module. 
Together with \ref{cor:Gar0}, we get coherence.

\ref{cor:Gar3} Assume $K$ is algebraically closed.
 Let $(V,\tV)=(\Spec A, \Spec \tA)$ be as above with $A=\tA[\tfrac{1}{\pi}]$ 
 and let $\tA^{\rm int}$ be the integral closure of $\tA$ in $A$.
Set
 \[M_V:=\G_a(0)(V,\tV)\quad \text{and}\quad M_V(N):=\{a\in A \mid a^{N}\in \pi\tA^{\rm int}\}, \quad \text{for }N\ge 1.\]
By Lemma \ref{lem:Gar3} and definition, we have 
\[ M_V(N)\subset M_V,\quad  M_V(N)\subset M_V(N'), \text{ for } N\le N',\quad \text{and}\quad   \cup_{N\ge 1} M_V(N)=M_V.\]
Denote by  $\pi^{1/N}$ a fixed root of the polynomial  $T^{N}-\pi\in K[T]$ in $K$. Then the inclusion
\eq{cor:Gar7}{\pi^{1/N}\tA^{\rm int}\inj M_V(N)}
is an equality. Indeed, for any $a\in M_V(N)$, we have $(a/\pi^{1/N})^{N}\in \tA^{\rm int}$, hence $a/\pi^{1/N}\in \tA^{\rm int}$.
Thus $M_V(N)$ is a free $\tA^{\rm int}$-module of rank one. As $\tA^{\rm int}$ is finite over $\tA$ 
(see the proof of Lemma \ref{lem:coh}), it follows that $M_V(N)$ is a $\pi$-torsion free finite $\tA$-module.
Thus $M_V(N)$ is an almost finitely presented  $\tA$-module, by \cite[Lemma 2.12.4]{Zavyalov}  
(with $(R,I\subset\fm)$ there, taken as $(\tA, \pi\tA\subset \fm \tA)$ here). As the ring $\tA$ is coherent, 
see \ref{para:setup}\ref{para:setup1}, 
we get that $M_V(N)$ is almost coherent, by \cite[Lemma 2.6.13 and Corollary 2.6.15]{Zavyalov}. 

We show that $M_V$ is an almost coherent $\tA$-module.
Indeed, let $\fm_0\subset \fm\tA$ be a finitely generated ideal. Then, there exists a $\delta_0\in \fm\subset R$ 
and a natural number $N_0\ge 1$
such that $\fm_0\subset \delta_0 \tA$ and $\delta_0^{N_0}\in \pi R$, i.e., $\delta_0\in \pi^{\frac{1}{N_0}} R$.
Then, we have 
\[\fm_0\cdot M_V\subset \delta_0\cdot M_V\subset \pi^{\frac{1}{N_0}} \cdot M_V\subset 
 \pi^{\frac{1}{N_0}} \cdot \tA^{\rm int}= M_V(N_0).\]
Thus the cokernel of the inclusion $M_V(N_0)\inj M_V$ is annihilated by $\fm_0$.
As $M_V(N_0)$ is almost coherent as shown above, \cite[Lemma 2.6.5(2)]{Zavyalov} yields that 
$M_V$ is an almost coherent  $\tA$-module as well. Thus by \cite[Remark 4.1.2 and Lemma 4.1.11]{Zavyalov}
the $\sO_{\tX}$-module  $\Gar_{(X, \tX)_{\Zar}}$ is almost coherent.
\end{proof}

The following is an integral variant of GAGA for a  particular sheaf (cf. \cite[Proposition \FKC{II}.9.4.2]{FK}):

\begin{thm}\label{thm;comparison-final}
Assume $K$ is either a complete discrete valuation field or a complete non-archimedean algebraically closed field with
ring of integers $R=\sO_K$. Let $\tX$ be a proper  and faithfully flat $R$-scheme 
such that $X=\tX\otimes_{\OK} K$ is normal. 
Denote by $\fX^{\rm rig}$ the rigid space associated to the formal completion $\fX$ of $\tX$ along $V(\pi)$. 
\begin{enumerate}[label=(\arabic*)]
    \item\label{thm;comparison-final1} There exists a canonical equivalence
\[R\Gamma((X,\tX)_t,\Gar) \simeq R\Gamma(\fXrig,\cO(c^r)),\quad \text{for }
\begin{cases}
    r\in \Z & \text{if } K \text{ is a cdvf},\\
    r\in \Q & \text{if }K=\overline{K}.
\end{cases}\]
\item\label{thm;comparison-final2} Assume $\ch(K)=0$ or $\dim X\le 3$.
Let $U\subseteq X$ be a dense open such that $X$ is regular in a neighborhood of $X-U$.
Then, there exists a canonical equivalence
\[R\Gamma((U,\tX)_t,\Gar) \simeq R\Gamma(\fXrig,\cO(c^r)),\quad \text{for }
\begin{cases}
    r\in \Z & \text{if } K \text{ is a cdvf},\\
    r\in \Q & \text{if }K=\overline{K}.
\end{cases}\]
\end{enumerate}
In particular, under the assumptions in \ref{thm;comparison-final2}, we have an equivalence
\eq{eq;thm;comparison-final}{
 R\Gamma_t((U,\tX),\Gar)\simeq R\Gamma_t((V,\tX),\Gar),}
for $r$ as above and for any two dense open subsets  $V\subset U\subset X$
such that $U_{\mathrm{sing}}\subset V$.
\end{thm}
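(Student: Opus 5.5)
Part \ref{thm;comparison-final1} will be obtained by reducing to $r=0$ and then chaining the comparison theorems of \S2 with Corollary \ref{cor:Gar}. For the reduction, when $K$ is a cdvf and $r\in\Z$ I use Lemma \ref{lem:Gar-shift}\ref{lem:Gar-shift2}: multiplication by $\pi^r$ gives $\G_a(0)_{(U',\tY)}\cong \Gar_{(U',\tY)}$ compatibly for all $(U',\tY)\in(X,\tX)_t$ with normal $U'$. Since étale objects over normal $X$ are normal, this is an isomorphism of tame sheaves $\G_a(0)\cong\Gar$. On the rigid side, multiplication by $\pi^r$ identifies $\cO(1)$ with $\cO(c^r)$, because $|\pi^r f|_{\sup}=c^r|f|_{\sup}$. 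When $K=\overline{K}$ and $r\in\Q$, I do the same with Lemma \ref{lem:Gar-shift}\ref{lem:Gar-shift3}, since $v_K(K^\times)$ is divisible. So it suffices to treat $\G_a(0)$ and $\cO(1)$.

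For the case $r=0$: if $K$ is a cdvf, Corollary \ref{cor:Gar}\ref{cor:Gar2} applied to each $(U',\tY)$ shows $\G_a(0)\in\ShvCoh{(X,\tX)}$; this also uses \ref{cor:Gar0} and the fact that admissible blowups of $\tX$ still satisfy \ref{para:setup}. Theorem \ref{thm;comparison} then gives $R\Gamma((X,\tX)_t,\G_a(0))\simeq R\Gamma(\fXrig,\G_a(0)_{\fXrig})$, and Corollary \ref{cor:Gar}\ref{cor:Gar1} identifies the right side with $R\Gamma(\fXrig,\cO(1))$. If $K$ is algebraically closed, Corollary \ref{cor:Gar}\ref{cor:Gar3} puts $\G_a(0)$ in $\ShvaCoh{(X,\tX)}$, and I use Theorem \ref{thm:almost-comparison} instead, again finishing with \ref{cor:Gar1}. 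The only checks needed are that the hypotheses hold: $R$ is $\pi$-adically complete, $\tX$ is proper and of finite presentation by \ref{para:setup}\ref{para:setup1}, and $R$ is universally adhesive or satisfies Zavyalov's set-up.

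For \ref{thm;comparison-final2}, I apply Theorem \ref{thm:colim-smooth-hironaka} (resp. Theorem \ref{thm:almost-colim-smooth-hironaka}) to $F=\Gar$. This requires $\Gar$ to be tamely birational, which follows from Lemma \ref{lem:Gar-shift}\ref{lem:Gar-shift1} applied to every object (cf. the Example after Definition \ref{defn:tb}). The result is
\[R\Gamma((U,\tX)_t,\Gar)\simeq\varinjlim_{\tX'\in\Sigma'_{\tX}(U)}R\Gamma(\RZ(\fX'),\Gar_{\RZ(\fX')}).\]
Each $\tX'\to\tX$ is an isomorphism away from the closure of $X\setminus U$, and its generic fiber $X'$ is a blowup of $X$ in a regular center lying in the regular locus. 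Hence $X'$ is normal, and part \ref{thm;comparison-final1}, applied through \eqref{eq2;FRZ}, identifies each term with $R\Gamma(\fX'^{\rig},\cO(c^r))$.

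The main obstacle is showing that the transition maps, which are induced by the rigid blowups $\fX'^{\rig}\to\fXrig$, are quasi-isomorphisms. For this I would use that the generic fiber of a blowup in a regular center is, locally on $\fXrig$, the rigid analytification of a projective bundle over the center and is an isomorphism elsewhere. I would compute $Rh_*\cO(c^r)$ for $h\colon \fX'^{\rig}\to\fXrig$ by a Čech/base-change argument reducing to Lemma \ref{lem1;CohRigid}, in its relative form (Lemma \ref{lem:hdi-general}) for $\P^n\times$ an affinoid via the rigid base change theorem for overconvergent sheaves. I expect the delicate point to be the local structure of the exceptional fiber, which is a projective bundle rather than a product. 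For that I would work on an affinoid cover of the center trivializing the normal bundle, together with admissible neighbourhoods of the center on which the blowup looks like a product. Granting this, the colimit is constant, and both the equivalence in \ref{thm;comparison-final2} and \eqref{eq;thm;comparison-final} follow.
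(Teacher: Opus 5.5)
Your reduction to $r=0$, your treatment of part (1), and the skeleton of part (2) are exactly the paper's. The skeleton consists of tame birationality via Lemma \ref{lem:Gar-shift}\ref{lem:Gar-shift1}, the colimit over $\Sigma'_{\tX}(U)$, and identifying each term via part (1). The problem is the step you flag as the main obstacle and then grant. The geometric picture you plan to use is false. The blowup is a projective bundle only over the center itself, and the center is not an admissible open. There are no admissible neighbourhoods of a point of the center over which the blowup has the form $\P^n\times(\text{affinoid})$. Near the center it looks locally like $C\times\mathrm{Bl}_0(\text{polydisc})$, whose fibres jump from $\P^{c-1}$ to points. So a \v{C}ech computation that reduces to products cannot be carried out. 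Moreover, $\cO(c^r)$ is not overconvergent, so the rigid base change theorem cannot be applied to it directly.

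What you need is precisely Lemma \ref{lem:hdi-general}. Contrary to your description, it is not a statement about $\P^n\times$ an affinoid. It applies to any proper $f$ with $Rf_*\cO_Y\simeq\cO_X$ and fibres $f^{-1}(x)\cong\P^n_x$. Its proof uses the triangle $\cO(s)\to\cO\to\cO/\cO(s)$ and handles $\cO$ by GAGA. The quotient $\cO/\cO(s)$ is overconvergent, so one checks stalks at analytic points $a$. There base change identifies the stalk with the cohomology of the fibre $\P^{n,\rig}_{F_a}$, and Lemma \ref{lem1;CohRigid} concludes. Only the fibres matter; no local product structure is needed.

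For a blowup of $X$ in a regular center lying in the regular locus, both hypotheses hold, since $Rf_*\cO_Y\simeq\cO_X$ is local on $X$. Apply this step by step along the sequence of blowups, and use that $X^{\rm an}=\fXrig$ because $X$ is proper. This is the paper's Lemma \ref{lem2;CohRigid}. With it the transition maps are equivalences, the colimit is constant, and your argument closes.
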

\begin{proof}
By Lemma \ref{lem:Gar-shift}\ref{lem:Gar-shift2} and  \ref{lem:Gar-shift3} it suffices to consider $r=0$.
If $K$ is a cdvf, then $\G_a(0)\in \ShvCoh{(X,\tX)}$, by  Corollary \ref{cor:Gar}\ref{cor:Gar2}, 
and hence statement \ref{thm;comparison-final1}  holds by Theorem \ref{thm;comparison} and Corollary \ref{cor:Gar}\ref{cor:Gar1};
if $K=\overline{K}$, then $\G_a(0)\in \ShvaCoh{(X,\tX)}$, by  Corollary \ref{cor:Gar}\ref{cor:Gar3}, 
and hence statement \ref{thm;comparison-final1} holds by Theorem \ref{thm:almost-comparison}.

By Lemma \ref{lem:Gar-shift}\ref{lem:Gar-shift1} the sheaf $\G_a(0)$ is tamely birational (see Definition \ref{defn:tb}). 
Hence, in the situation of Theorem \ref{thm:colim-smooth-hironaka} \ref{thm;comparison-final2}
(resp. Theorem  \ref{thm:almost-colim-smooth-hironaka}) yields an equivalence
\[
R\Gamma((U,\tX)_t,\G_a(0)) \simeq \colim_{\tY\to \tX} R\Gamma(\fY^{\rm rig},\cO(1)),
\]
where $\tY\to \tX$ ranges over $U$-admissible blow-ups in a center $\tT$ such that $T:=\tT\otimes_{R} K$ is regular.
Therefore, the first statement of \ref{thm;comparison-final2} follows from Lemma \ref{lem2;CohRigid} below, and \eqref{eq;thm;comparison-final} follows from the fact that
$U_{\mathrm{sing}}\subset V$, therefore $X$ is regular in a neighborhood of $X-V$.
\end{proof}

\def\NSchKp{\mathrm{NSch}_K'}

\begin{rmk}\label{rmk;thm;comparison-final}
Let $\NSchKp$ be the category of normal schemes $X$ separated of finite type over $K$ such that  $X_{\mathrm{sing}}$ is proper over $K$.
Consider the functor (cf. \eqref{eq;cFr}):
\[ \NSchKp\ni  X\mapsto \cF(X) := R\Gamma_t(X/R,\G_a(0)).\]
Let $X\in \NSchKp$ and assume $\ch(K)=0$ or $\dim X\le 3$.
\begin{itemize}
\item[(1)]
There is a normal compactification $X\hookrightarrow \Xb$ over $K$ such that  $\Xb$ is regular in a neighborhood of $\Xb-X$: Take any
normal compactification $X\hookrightarrow \Xb$ over $K$.
Since $X_{\mathrm{sing}}$ is proper over $K$, its closure in $\Xb$ is contained in $X$. Hence, we can resolve the singularities of $\Xb$ along $\Xb-X$ by $X$-admissible blowups to make $\Xb$ regular in a neighborhood of $\Xb-X$. 
By Theorem \ref{thm;comparison-final}\ref{thm;comparison-final2}, this implies
\[\cF(X)\otimes_R K \simeq R\Gamma(\Xb,\cO).\] 
\item[(2)]
For a morphism $f:Y\to X$ in $\NSchKp$ which is an isomorphism over a dense open $U\subset X$ such that $X_{\mathrm{sing}}\subset U$ and $Y_{\mathrm{sing}}\subset f^{-1}(U)$, we have $\cF(X)\simeq \cF(Y)$.
Indeed, this follows from a commutative diagram
\[\xymatrix{
\cF(X)\ar[r] \ar[d]^{f^*} & \cF(U)\ar[d]\\
\cF(Y)\ar[r] & \cF(U\times_X Y)\\ }\]
where the right vertical map is an isomorphism by the assumption and the horizontal maps are isomorphisms thanks to \eqref{eq;thm;comparison-final}.
Noting $\Sm_K\subset \NSchKp$, this implies the property \ref{thm;main2} \ref{thm;main2c} of Theorem \ref{thm;main}.

\end{itemize}
\end{rmk}

\def\fX{\mathfrak{X}}
\def\fY{\mathfrak{Y}}
\def\fU{\mathfrak{U}}
\def\frig{f^{\rig}}
\def\an{\mathrm{an}}

\begin{lemma}\label{lem2;CohRigid}
Let $\tX$ be a proper and faithfully flat $R$-scheme and assume $X=\tX\otimes_R K$ is regular.
Let $\tY\to \tX$ be the blow-up in a center $\tT$  such that $T:=\tT\otimes_{R} K$ is regular. 
Let $f: Y=\tY\otimes_R K \to X$ be the induced map  of regular $K$-schemes and $\frig: \fYrig\to \fXrig$ be the induced map on the rigid spaces associated to the formal completions of $\tX$ and $\tY$ along $V(\pi)$.
Then, pullback induces an equivalence  
\[\cO_{\fXrig}(s)\simeq R\frig_*\cO_{\fYrig}(s),\quad \text{for all } s\in \R_{>0}.\] 
\end{lemma}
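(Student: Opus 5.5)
The question is local on $\fXrig$, so I will work on an admissible affinoid covering of $\fXrig$ over which $f^{\rig}$ looks like the blow-up of a regular affinoid in a regular closed subspace. I will then reduce to a relative projective space computation, where Lemma \ref{lem1;CohRigid} and Theorem \ref{thm.van1} apply.

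\emph{Step 1 (localization).} Since $X$ and $T$ are regular, for every closed point $x\in X$ there are local coordinates near $x$ in which $T$ is a coordinate linear subspace $\{t_1=\dots=t_c=0\}$. Passing to the rigid side, I will cover $\fXrig$ by affinoids $V=\Sp(B)$ together with étale maps $V\to D$ onto generalized polydisks $D\subset \bB^d$ (of the form in \ref{para:gen-poly-disc}), such that $T^{\rig}\cap V$ is the pullback of a coordinate subspace. Replacing $\tX$ by a suitable admissible blow-up changes neither the statement nor $\fXrig$. On each such $V$, $(f^{\rig})^{-1}(V)$ is the rigid blow-up of $V$ along $T^{\rig}\cap V$, which is the base change of the blow-up of $D$ along a coordinate subspace $D\cap L$.

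\emph{Step 2 (reduction to a polydisk).} The blow-up $\Bl_{L}(D)$ embeds in $D\times\P^{c-1}$ and is covered by the standard charts. Each chart $\{[u_1:\dots:u_c]\,:\,|u_j|\le |u_i|\}$ is again isomorphic to a generalized polydisk-type domain in the coordinates $t_i$ and $t_j/t_i$. I will then compute $R\Gamma(\Bl_L(D),\cO(s))$ by the Čech complex for this chart covering, using Theorem \ref{thm.van1} on each chart and on each intersection, in the same way as in the proof of Lemma \ref{lem1;CohRigid}. The resulting complex is the $\pi$-adic completion of an algebraic Čech complex computing $R\Gamma(\Bl_L(\A^d),\cO)\otimes K(s)$, tensored with the appropriate rings of sections. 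Algebraically, $R\Gamma(\Bl_L(\A^d_R),\cO)=R[t_1,\dots,t_d]$: higher cohomology vanishes by the standard computation for blow-ups of regular schemes in regular centers, which is explicit for coordinate subspaces. The contracting homotopy is $\pi$-adically continuous, so it survives completion, giving $R\Gamma(\Bl_L(D),\cO(s))\simeq \cO(s)(D)$. Step 3 needs this base-change-stable, so I will prove it in the relative form $Rf_*\cO(s)=\cO(s)$ over $D$.

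\emph{Step 3 (étale base change).} To pass from $D$ to $V$, I will use the base change theorem for overconvergent sheaves (\cite[Theorem 2.7.4]{dJvdP}), as the introduction indicates for Lemma \ref{lem1;CohRigid-intro}. The sheaves $\cO(s)$ are overconvergent, and the blow-up of $V$ is the fiber product of $V$ with the blow-up of $D$. Computing stalks at points of $V$ (including generic points of the associated adic space), the higher direct images vanish, and $f^{\rig}_*\cO(s)=\cO(s)$. For the degree-zero part, $f^{\rig}$ is an isomorphism off a nowhere-dense analytic subset and the sup norm is unchanged, since $f^{\rig}$ is surjective and $\fYrig$ is reduced. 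So sections with $|g|_{\sup}<s$ correspond exactly.

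\emph{Main obstacle.} I expect Step 3 to be the hardest part. Theorem \ref{thm.van1} applies only to polydisks, not to arbitrary smooth affinoids: Theorem \ref{thm.bartenwerfer} gives vanishing only up to a bounded constant. Transporting exact vanishing along étale maps therefore requires the stalkwise argument. If that fails, I would instead shrink the covering so that each $V$ is itself isomorphic to a generalized polydisk, for example by using that points admit neighborhoods isomorphic to polydisks after a finite extension. This would, however, need a separate descent argument.
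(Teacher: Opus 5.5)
There is a genuine gap, located in Step 3 where you expected trouble.

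\textbf{Where the argument fails.} Step 3 rests on the assertion that the sheaves $\cO(s)$ are overconvergent. This is false: neither $\cO$ nor $\cO(s)$ is overconvergent. For instance, on $\bB^1=\Sp(K\langle T\rangle)$ take $g=\sum_{n\ge 1}c_nT^{-n}$ with $|c_n|<s$ and $|c_n|\to 0$ more slowly than any geometric sequence. This is a section of $\cO(s)$ over $\{|T|=1\}$. Its germ at the Gauss point, however, does not come from any neighbourhood of the rank-two point specializing the Gauss point towards the residue class of $0$. Every such neighbourhood contains a set $\{\rho\le |T|\le 1,\ |T-a_i|\ge 1\}$ with $\rho<1$ and finitely many $|a_i|=1$, so uniqueness of Mittag-Leffler decompositions would force $g$ to converge on $\{|T|\ge \rho\}$.

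Hence \cite[Theorem 2.7.4]{dJvdP} cannot be applied to $\cO(s)$. Nor can you test $\cO(s)\to Rf^{\rig}_*\cO(s)$ on stalks at analytic points. The surrounding reductions do not repair this:
\begin{enumerate}
\item $\cO(s)$ is defined through sup norms and is not compatible with \'etale pullback. So nothing transfers from $\Bl_L(D)$ to the blow-up of $V$ along $V\to D$.
\item A relative statement over $D$ would require the computation on a basis of affinoid subdomains of $D$. Theorem \ref{thm.van1} only covers generalized polydisks, which do not form such a basis, in particular for non-algebraically closed $K$.
\item Step 1 needs \'etale coordinates, i.e.\ smoothness of $X$ and $T$. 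The lemma only assumes regularity, and this matters for imperfect $K$ of characteristic $p$, where the Cossart--Piltant centers are merely regular.
\item Your fallback would need smooth affinoids to be G-locally generalized polydisks. This fails in general, e.g.\ at the point of the analytification of a good-reduction elliptic curve corresponding to the generic point of its special fibre.
\end{enumerate}

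\textbf{The missing idea.} Pass to the quotient $\cO(s,\infty):=\cO/\cO(s)$, which \emph{is} overconvergent by \cite[Lemma 1.5.2]{vdP}, and use the triangle $\cO(s)\to\cO\to\cO(s,\infty)$ on both sides.
\begin{itemize}
\item Middle term: $Rf_*\cO_Y\simeq \cO_X$ because $f$ is the blow-up of a regular scheme in a regular center. Rigid GAGA transports this to $\fXrig$, which equals $X^{\rm an}$ since $\tX$ is proper.
\item Third term: overconvergence of $\cO(s,\infty)$, and hence of its higher direct images, lets you check the map at analytic points $a$. The base change theorem identifies the stalk with $R\Gamma(\P^n_{F_a},\cO(s,\infty))$, where $F_a$ is the completed residue field, because the fibres of $f$ are projective spaces. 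On $\P^n_{F_a}$ the same triangle, together with Lemma \ref{lem1;CohRigid} and GAGA, finishes the argument.
\end{itemize}
Local coordinates, \'etale maps to polydisks and the \v{C}ech computation on $\Bl_L(D)$ then become unnecessary. Only $Rf_*\cO_Y\simeq\cO_X$ and the projective-space fibres are used, which is why the statement holds for regular rather than smooth data.
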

\begin{proof}
 As $f: Y\to X$ is a blow-up of a regular $K$-scheme in a regular center, we have $\cO_X=Rf_*\cO_Y$.
 Furthermore, the fibers of $f$ are projective spaces. Hence the statement follows from Lemma \ref{lem:hdi-general} below, recalling that if $X$ is proper then $X^{\rm an}=\fX^{\rig}$ by \cite[Proposition \FKC{II}.9.1.5]{FK}.
\end{proof}
\def\fan{f^{\rm an}}
\def\Xan{X^{\rm an}}
\def\Yan{Y^{\rm an}}
\begin{lemma}\label{lem:hdi-general}
Let $f:Y\to X$ be a proper morphism of $K$-schemes separated of finite type and let 
$\fan: \Yan\to \Xan$ the induced map on the rigid spaces associated to the analytification of $X$ and $Y$ as in \cite[Construction \FKC{II}.9.1.6]{FK}.
Assume $f$ satisfies
\begin{enumerate}[label=(\arabic*)]
    \item\label{lem:hdi-general1} $Rf_*\sO_Y\simeq \sO_X$, and
    \item\label{lem:hdi-general2} for all $x\in X$ there is an $n\ge 0$ (depending on $x$) such that $f^{-1}(x)\cong \P^n_x$.
\end{enumerate}
Then, pullback induces  an equivalence  
\[\cO_{\Xan}(s)\simeq R\fan_*\cO_{\Yan}(s),\quad \text{for all } s\in \R_{>0}.\] 
\end{lemma}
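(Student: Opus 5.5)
The equivalence is a statement about sheaves on $\Xan$, so I will check it on a basis of admissible opens and reduce to the situation of Lemma \ref{lem1;CohRigid}. Since $f$ is proper, $\fan$ is proper. By the base change theorem in rigid geometry \cite[Theorem 2.7.4]{dJvdP}, applied to the overconvergent sheaves $\cO(s)$, the stalks of $R^i\fan_*\cO_{\Yan}(s)$ at a (rigid, or better adic/Berkovich) point $x$ are computed by the cohomology of the fibre. More precisely, the rule is
\[(R^i\fan_*\cO_{\Yan}(s))_x \simeq \varinjlim_{V\ni x} H^i((\fan)^{-1}(V),\cO(s)),\]
and for overconvergent sheaves this colimit agrees with the cohomology of the fibre $(\fan)^{-1}(x)$ with coefficients in the restricted sheaf. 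This is why one works with $\cO(s)$ rather than $\cO^o$: the quotient $\cO/\cO(s)$ is overconvergent, and $\cO$ itself satisfies ordinary base change. Concretely, I would deduce the statement for $\cO(s)$ from the exact sequences $0\to\cO(s)\to\cO\to\cO/\cO(s)\to 0$ on both sides. For $\cO$, hypothesis \ref{lem:hdi-general1} together with rigid GAGA for the proper map $f$ gives $R\fan_*\cO_{\Yan}\simeq\cO_{\Xan}$. It then suffices to treat the overconvergent quotient $\cO/\cO(s)$ fibrewise.

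The fibrewise computation uses hypothesis \ref{lem:hdi-general2}. For a point $x\in\Xan$ with completed residue field $\mathcal{H}(x)$, the analytic fibre is $(\P^n_{\mathcal{H}(x)})^{\rm an}$. The stalk of the sheaf $\cO(s)$ restricted to the fibre, compared with $\cO_{\P^n}(s)$ on the fibre, should match via the supremum norms. By Lemma \ref{lem1;CohRigid} over the complete field $\mathcal{H}(x)$, we get $R\Gamma(\P^n_{\mathcal{H}(x)},\cO(s))=\mathcal{H}(x)(s)[0]$, and likewise for $\cO$ and $\cO/\cO(s)$ (the latter then being $\mathcal{H}(x)/\mathcal{H}(x)(s)$ in degree $0$). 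This would give that $R^i\fan_*(\cO/\cO(s))=0$ for $i>0$ and that $\cO_{\Xan}/\cO(s)\to\fan_*(\cO_{\Yan}/\cO(s))$ is an isomorphism on stalks. The five lemma then gives the claim for $\cO(s)$.

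An alternative is to work locally: choose an affinoid $V=\Sp(B)\subset\Xan$ over which $f$ is, locally on the source, a standard situation. I would rather not do this, since a product decomposition $\P^n\times V$ need not exist when $f$ is only a blow-up. That is why the plan goes through base change to fibres instead.

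I expect the main obstacle to be making this base change rigorous. First, Lemma \ref{lem1;CohRigid} is stated over $K$, while the fibres live over the non-discretely valued or larger fields $\mathcal{H}(x)$. Its proof only uses Theorem \ref{thm.van1}, which holds for any complete non-archimedean field, so this should be harmless, although the normalization $s\in\R_{>0}$ with values $|\mathcal{H}(x)^\times|$ must be checked. Second, and more seriously, one must identify the restriction of $\cO_{\Yan}(s)$ (and of $\cO/\cO(s)$) to the fibre, as an overconvergent sheaf, with the intrinsic sheaf $\cO(s)$ on $(\P^n_{\mathcal{H}(x)})^{\rm an}$. The key input here is that the sup norm on a fibre is the limit of sup norms on shrinking neighbourhoods. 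I would first try to prove this identification for $\cO/\cO(s)$ using the overconvergence formalism of \cite{dJvdP}, and pass to the germ description of stalks in the Berkovich/adic setting.
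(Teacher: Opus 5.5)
Your proposal is correct and follows the paper's proof essentially step by step. The steps match: the triangle $\cO(s)\to\cO\to\cO/\cO(s)$, then GAGA plus hypothesis (1) for the middle term, then overconvergence of $\cO/\cO(s)$ to reduce to stalks at analytic points, and finally the de Jong--van der Put base change theorem to reduce to the fibre $\P^n_{F_a}$, where Lemma \ref{lem1;CohRigid} (valid over any complete non-archimedean field) and GAGA finish. The identification of the restricted quotient sheaf with the intrinsic one on the fibre, which you flag as the main obstacle, is exactly what the paper takes from \cite[Lemma 3.16]{vdP}; note only that it is $\cO/\cO(s)$, not $\cO(s)$ as your first paragraph says, that is overconvergent.
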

\begin{proof}
Consider the morphism of exact triangles 
\eq{eq0;lem2;CohRigid}{ 
\xymatrix{ 
R\fan_*\cO_{\Yan}(s)\ar[r] &  R\fan_*\cO_{\Yan}\ar[r] & R\fan_*\cO_{\Yan}(s,\infty)\ar[r] &\\
\cO_{\Xan}(s)\ar[r]\ar[u]^{\alpha} &  \cO_{\Xan}\ar[r]\ar[u]^{\beta} & \cO_{\Xan}(s,\infty)\ar[r]\ar[u]^{\gamma} &,
}}
where $\cO_{\Xan}(s,\infty)=\cO_{\Xan}/\cO_{\Xan}(s)$.
As the natural map $\cO_X\to Rf_*\cO_Y$ is an equivalence by assumption \ref{lem:hdi-general1}
so is $\beta$ by GAGA (see \cite[Theorem \FKC{II}.9.4.1 and Proposition \FKC{II}.9.1.5]{FK}).
Thus it remains to show that $\gamma$ is an equivalence.

As $\cO_{\Yan}(s,\infty)$ is overconvergent, by \cite[Lemma 1.5.2]{vdP},  so is  
$R^i\fan_*\cO_{\Yan}(s,\infty)$, for all $i\ge 0$, by \cite[Proposition 2.4.1]{dJvdP}.
Thus $\gamma$  is an equivalence if its stalk $\gamma_a$ is an equivalence at all analytic points $a$ of $\Xan$, 
by  \cite[\S2.1 and Lemma 2.3.2, 6.]{dJvdP}.
By the base change theorem \cite[Theorem 2.7.4]{dJvdP} and \cite[Lemma 3.16]{vdP} the stalk $\gamma_a$ is the map
\eq{eq3;lem2;CohRigid}{\frac{F_a}{\{f\in F_a\mid |f|_a<s\}}=\cO(s,\infty)(F_a)\to R\Gamma(\Yan_a,\cO_{\Yan_a}(s,\infty)),}
where $F_a$ is the completed residue field of the analytic point $a$ and $\Yan_a$ denotes the fiber of $\fan$ over $a$.
This latter map fits in a diagram like \eqref{eq0;lem2;CohRigid} with $\fan: \Yan\to \Xan$ replaced by
$\Yan_a\to \Sp\, F_a$. Noting $\Yan_a$ is a rigid projective space $\P^{n,\rig}_{F_a}$ over $F_a$, by 
assumption \ref{lem:hdi-general2} and \cite[Proposition \FKC{II}.9.1.10]{FK}, 
it follows from Lemma \ref{lem1;CohRigid} and GAGA that \eqref{eq3;lem2;CohRigid} is an equivalence. 
This completes the proof of Lemma  \ref{lem:hdi-general} and hence also of Theorem \ref{thm;comparison-final}.
\end{proof}

\begin{thm}\label{thm;A^1-invariance}
Let $K$ and $U\subset X\subset \tX$ be as in Theorem \ref{thm;comparison-final}\ref{thm;comparison-final2}.
Assume further that $X$ is regular.
For any dense open $V\subset \P^1_U$, the pullback map
\[R\Gamma((U,\tX)_t,\Gar) \to R\Gamma((V,\P^1_{\tX})_t,\Gar)\]
is an equivalence, where $r\in \Q$ has the same constraints as in Theorem \ref{thm;comparison-final}\ref{thm;comparison-final2}. 
\end{thm}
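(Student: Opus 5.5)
Reduce to $r=0$ via Lemma \ref{lem:Gar-shift}\ref{lem:Gar-shift2} and \ref{lem:Gar-shift3} (multiplication by $\pi^r$ commutes with pullback), then compare both sides with rigid cohomology through Theorem \ref{thm;comparison-final}\ref{thm;comparison-final2}. The target pair is $(V,\P^1_{\tX})$ with $\P^1_{\tX}$ proper and faithfully flat over $R$, and generic fibre $\P^1_X$ regular, hence normal. For $V\subset \P^1_U$ dense open, $\P^1_X$ is regular everywhere, in particular near $\P^1_X-V$. So Theorem \ref{thm;comparison-final}\ref{thm;comparison-final2}, applied to $\P^1_{\tX}$ and the dense open $V$, gives
\[R\Gamma((V,\P^1_{\tX})_t,\G_a(0))\simeq R\Gamma((\P^1_{\fX})^{\rig},\cO(1)),\]
where $(\P^1_{\fX})^{\rig}=\P^{1,\rig}\times\fXrig$ is the rigidification of the completion of $\P^1_{\tX}$. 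On the source side, the same theorem applied to $U\subset X\subset\tX$ (here $X$ is regular, so the hypothesis holds) gives $R\Gamma((U,\tX)_t,\G_a(0))\simeq R\Gamma(\fXrig,\cO(1))$.

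Next I check that these identifications are compatible with pullback. The projection $p\colon\P^1_{\tX}\to\tX$ induces a morphism of tame sites $(V,\P^1_{\tX})\to (U,\tX)$, since $V\to U$ is smooth, not étale. One option is to factor the pullback as
\[(U,\tX)\leftarrow(\P^1_U,\P^1_{\tX})\leftarrow (V,\P^1_{\tX}),\]
where the second map is an equivalence by \eqref{eq;thm;comparison-final} because $\P^1_X$ is regular. It is cleaner, though, to verify directly that the equivalences of Theorem \ref{thm;comparison} (via \eqref{eq:et-zar}, GFGA and the colimit over admissible blow-ups) are natural in proper maps of pairs. Concretely, an admissible blow-up $\tY\to\tX$ pulls back to the admissible blow-up $\P^1_{\tY}\to\P^1_{\tX}$, and pullback of $\G_a(0)$ corresponds under Corollary \ref{cor:Gar}\ref{cor:Gar1} to the pullback $\cO_{\fXrig}(1)\to p^{\rig}_*\cO(1)$. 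I expect this naturality bookkeeping, particularly in the almost coherent case where Theorem \ref{thm:almost-comparison} is used, to be the main technical nuisance. It should nevertheless be routine, because every step is functorial.

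It then remains to show that
\[R\Gamma(\fXrig,\cO(1))\to R\Gamma(\P^{1,\rig}\times\fXrig,\cO(1))\]
is an equivalence. Writing the target as $R\Gamma(\fXrig,Rp^{\rig}_*\cO(1))$, it suffices to prove $\cO_{\fXrig}(1)\simeq Rp^{\rig}_*\cO(1)$. This is Lemma \ref{lem:hdi-general} with $f=p\colon\P^1_X\to X$: condition \ref{lem:hdi-general1} holds because $Rp_*\cO_{\P^1_X}=\cO_X$, and condition \ref{lem:hdi-general2} holds because every fibre is $\P^1$. Since $X$ is proper, $X^{\rm an}=\fXrig$ and $(\P^1_X)^{\rm an}=(\P^1_{\fX})^{\rig}$ by \cite[Proposition \FKC{II}.9.1.5]{FK}. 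The essential analytic input is therefore already contained in Lemma \ref{lem1;CohRigid} together with the base change theorem of \cite{dJvdP}, as packaged in Lemma \ref{lem:hdi-general}.
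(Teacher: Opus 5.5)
Your proposal is correct and is essentially the paper's proof: both sides are identified with rigid cohomology of $\cO(c^r)$ via Theorem \ref{thm;comparison-final}\ref{thm;comparison-final2}, and the conclusion follows from Lemma \ref{lem:hdi-general} for the projection $\P^1_X\to X$ together with \cite[Proposition \FKC{II}.9.1.5]{FK}. The paper simply asserts that the resulting square commutes, so your naturality discussion only adds detail. One hypothesis you did not check, which the paper also leaves implicit, is that applying Theorem \ref{thm;comparison-final}\ref{thm;comparison-final2} to $(V,\P^1_{\tX})$ when $\ch(K)>0$ requires $\dim \P^1_X\le 3$, i.e.\ $\dim X\le 2$; this is consistent with Theorem \ref{thm;main}, which in positive characteristic asserts $\A^1$-invariance only for $\dim X\le 2$.
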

\begin{proof}
By Theorem \ref{thm;comparison-final}, we have a commutative diagram
\[\xymatrix{
	R\Gamma((U,\tX)_t,\Gar) \ar[r]\ar[d]^\simeq  &R\Gamma((V,\P^1_{\tX})_t,\Gar)\ar[d]^\simeq\\
	R\Gamma(\fXrig,\cO(c^r)) \ar[r] & R\Gamma(\fXrig\times \P_K^{1,\rig},\cO(c^r)).}\]
Thus  Lemma \ref{lem:hdi-general} applied to the projection $f: X_K\times \P^1_K\to X_K$ together with \cite[Proposition \FKC{II}.9.1.5]{FK}
yields the statement.
\end{proof}

\begin{defn}\label{def;ModR}
	Let $K$ be as in \ref{para:setup}, $R=\cO_K$  and $\pi$ a pseudo-uniformizer.
	We denote by $\Mod_R^0$ the full subcategory of the category of $R$-modules $\Mod_R$
    consisting of such $M$ that are annihilated by a power of $\pi$.
    This is a Serre category and we define $\cD(R)^0$ to 
    be the full subcategory of $\cD(R)$ of complexes whose cohomology groups are in $\Mod_R^0$.
    
    We denote by $\Mod_R^f$ the full subcategory of $\Mod_R$ consisting of such $M$ that have a finitely generated 
    $R$-submodule $M'\subseteq M$ with $M/M'\in \Mod_R^0$. 
\end{defn}

\begin{rmk}\label{rmk:finite-ubt}
\begin{enumerate}[label=(\arabic*)]
    \item\label{rmk:finite-ubt1} Let $K$ be a cdvf and $\pi\in R$ a uniformizer. Then
    $M\in \Mod_R^f$ if and only if the $R$-torsion submodule $M_{\rm tor}$ is annihilated by a fixed power of $\pi$ 
    and $M/M_{\rm tor}$ is finitely generated. 
    
    Indeed, if $M/M_{\rm tor}$ is finitely generated, it is free so the quotient map splits and we can choose $M'$ to be $M/M_{\rm tor}$.
    For the other implication, assume $M'$ is a finitely generated $R$-module such that $M/M'$ is annihilated by $\pi^e$, 
    for some $e\ge 1$. Then $\pi^e\cdot (M/M_{\rm tor})\subset (M'/M'\cap M_{\rm tor})=:N'$. As $R$ is noetherian, so is the $R$-module  
    $N'$ and thus $M/M_{\rm tor}$ is finitely generated as it is isomorphic to a submodule of $N'$. Moreover,
    we get that $\pi^e M_{\rm tor}$ is contained in the finitely generated $R$-module $M_{\rm tor}\cap M'$ and hence there exists an
    $n\ge e$ such that $\pi^n M_{\rm tor}=0$.
    \item\label{rmk:finite-ubt2} 
    If $K=\ol{K}$, then any almost finitely generated $R$-module (e.g. \cite[Definition 2.5.1]{Zavyalov}) lies in $\Mod_R^f$. 
    The converse however is not true.
\end{enumerate}
\end{rmk}

\begin{rmk}\label{rmk:lattice-modf}
    Let $M\in \Mod_R^f$ and let $f\colon M\to M\otimes_R K$ be the canonical map. Then $f(M)$ is an $R$-lattice in the sense of \cite[VII, No.1, Definition 1]{BourbakiCA}. Indeed, let $M'\subseteq M$ be finitely generated such that $\pi^n M/M' = 0$. Then $E\subset f(M)\subset \pi^{-n}E$  with $E=f(M')$, which is a finitely generated free $R$-module 
    as $R$ is a valuation ring.
\end{rmk}

\begin{lemma}\label{lem:modf}
The category $\Mod_R^f$ is closed under quotients. Let $N\xrightarrow{g} M\xrightarrow{f} C\to 0$ be an exact sequence of $R$-modules such that $N,C\in \Mod_R^f$. Then $M\in \Mod_R^f$.
    \begin{proof}
The first assertion is obvious. By replacing $N$ by the image of $g$, we can suppose that $g$ is injective. Let $i_N\colon N'\subseteq N$ and $i_C\colon C'\subseteq C$ be finitely generated with $N/N'$ and $C/C'$ in $\Mod_R^0$. 
Let $\phi\colon R^n\to C'$ be a surjection of $R$-modules and let $\psi\colon R^n\to M$ be a lift of $i_C\circ \varphi$.
We obtain a commutative diagram with exact rows:\[
        \begin{tikzcd}
            0\ar[r] &N'\ar[r]\ar[d,hook,"i_N"]&N'\oplus R^n\ar[r]\ar[d,"{(g\circ i_
            N)\oplus \psi}"]&R^n\ar[r]\ar[d,"i_C\circ \phi"] &0\\
            0\ar[r] &N\ar[r,"g"]&M\ar[r,"f"]&C\ar[r] &0.
        \end{tikzcd}
        \]
        Letting $M'\subseteq M$ be the image of $(g\circ i_M)\oplus \psi$, it is finitely generated by construction 
        and the snake lemma yields $M/M'\in \Mod_R^0$.
    \end{proof}
\end{lemma}

\begin{thm}\label{thm:fg-uncond}
    Assume $K$ is either a complete discrete valuation field or a complete non-archimedean algebraically closed field with
ring of integers $R=\sO_K$. Let $\tX$ be a proper  and faithfully flat $R$-scheme 
such that $X=\tX\otimes_{\OK} K$ is smooth. 

\begin{enumerate}[label=(\arabic*)]
    \item \label{thm:fg-uncond1} Let $r\in \Q$ have the same constraints as in 
Theorem \ref{thm;comparison-final}\ref{thm;comparison-final1} depending on $K$. Then 
\[H^i((X,\tX)_t,\Gar)\in \Mod_R^f, \quad \text{for all } i.\]
\item\label{thm:fg-uncond2} Let $U\subset X\subset \tX$ be as in Theorem \ref{thm;comparison-final}\ref{thm;comparison-final2}.
Let $r\in \Q$ have the same constraints as in Theorem \ref{thm;comparison-final}\ref{thm;comparison-final2} depending on $K$.
Then

\[H^i((U,\tX)_t,\G_a(r))\in \Mod_R^f,\quad \text{for all }i.\]
\end{enumerate}
    \begin{proof}
    \ref{thm:fg-uncond2} follows from \ref{thm:fg-uncond1} and Theorem \ref{thm;comparison-final}.
    We prove \ref{thm:fg-uncond1}.
    By Lemma \ref{lem:Gar-shift}, we can suppose $r=0$. Fix a pseudo-uniformizer $\pi$ of $R$ and 
    denote for  an $R$-algebra $A$ by $\hat{A}$ its $\pi$-adic completion.
    Consider the canonical map 
    \[
    \chi\colon R\Gamma(\tX,\G_a(0)_{(X,\tX),\Zar})\to R\Gamma((X,\tX)_t,\G_a(0)).\] 
    By Corollary \ref{cor:Gar}\ref{cor:Gar2},\ref{cor:Gar3}, we have $H^i(\tX,\G_a(0)_{(X,\tX),\Zar})\in \Mod_R^f$, 
    where in case $K$ is algebraically closed we use \cite[Lemma 4.4.4 and Theorem 5.1.3]{Zavyalov} and 
    Remark \ref{rmk:finite-ubt}\ref{rmk:finite-ubt2}.
    By Lemma \ref{lem:modf} it suffices to check that $\mathrm{Cone}(\chi)\in \cD(R)^0$. 
    By Theorem \ref{thm;comparison-final}\ref{thm;comparison-final1} 
    and, in case $K$ is a cdvf, by \eqref{eq:GFGA} and Corollary \ref{cor:Gar}\ref{cor:Gar2}
    (resp. in case $K=\ol{K}$, by \eqref{para:ac1} and Corollary \ref{cor:Gar}\ref{cor:Gar3}),
     the cone 
     of $\chi$ is equivalent to the cone of\[
    \chi': R\Gamma(\fX,\G_a(0)_{(X,\tX),\Zar}^{\rm for})\to R\Gamma(\fXrig,\cO(1)),
    \]
    where $\fX$ and $\fXrig$ are as in Theorem \ref{thm;comparison-final}.
    Take a finite open covering $\tX=\cup_{1\leq j\leq m} \tU_j$ by affine schemes $\tU_j=\Spec(A_j)$ and let $\fU_j = \Spf(\hat{A}_j)$, 
    which induces an admissible covering $\fXrig= \cup_{1\leq j\leq m} \fU_j^{\rig}$. Let $\tU_{[\bullet]}$, $\fU_{[\bullet]}$, 
    and $\fU_{[\bullet]}^{\rm rig}$ be the associated \v Cech nerves on $\tX$, $\fX$ and $\fX^{\rm rig}$, respectively, which are $m$-skeletal. Since $\tX$ is separated, $\tU_{[q]}= \Spec(A_{[q]})$ and $\fU_{[q]} = \Spf(\hat{A}_{[q]})$, so $\fU^{\rig}_{[q]}=\Spf(\hat{A}_{[q]})^{\rm rig}$ is affinoid.
	 We have an equivalence
     \[
    \G_a(0)_{(X,\tX),\Zar}^{\rm for}(\fU_{[q]})[0]\xrightarrow{\simeq}R\Gamma(\fU_{[q]},\G_a(0)_{(X,\tX),\Zar}^{\rm for}).
    \]
    In case $K$ is a cdvf, this uses \cite[Theorem \FKC{I}.7.1.1]{FK} together with the fact that the $\cO_{\tX}$-module
     $\G_a(0)_{(X,\tX),\Zar}$ is coherent, and hence $\G_a(0)_{(X,\tX),\Zar}^{\rm for}$ is 
     $\pi$-adically complete and hence 
    adically  quasi-coherent; in case $K=\ol{K}$ this uses \cite[Lemma 4.8.11]{Zavyalov}, 
    the fact that $\G_a(0)_{(X,\tX),\Zar}$ is quasi-coherent almost coherent, and \cite[Corollary 4.6.4]{Zavyalov}, which 
    says that $\G_a(0)_{(X,\tX),\Zar}^{\rm for}=c^*\G_a(0)_{(X,\tX),\Zar}$ is an adically quasi-coherent almost coherent 
    $\sO_{\fX}$-module.
    Hence, Zariski descent yields  equivalences
	\eq{eq1;thm;finiteness}{ 
	\begin{aligned}
	& R\Gamma(\fX,\G_a(0)_{(X,\tX),\Zar}^{\rm for})\simeq 
 		\lim_{q\in \Delta} \G_a(0)_{(X,\tX),\Zar}^{\rm for}(\fU_{[q]})[0],\\
 		& R\Gamma(\fXrig,\cO(1))\simeq 
 		\lim_{q\in \Delta} R\Gamma(\cU^{\rig}_{[q]},\cO(1)). \end{aligned}}
       Therefore, we get an equivalence
        \[ \Cone(\chi')\simeq \lim_{q\in \Delta}\Cone\big(\G_a(0)_{(X,\tX),\zar}^{\rm for}(\fU_{[q]})[0]\to R\Gamma(\cU^{\rig}_{[q]},\cO(1))\big).    \] 
Moreover, setting $U_{[q]}=\tU_{[q]}[\tfrac{1}{\pi}]$ we have isomorphisms 
        \[\G_a(0)_{(X,\tX)}^{\rm for}(\fU_{[q]})\cong \G_a(0)(U_{[q]}, \tU_{[q]})\otimes_{A_{[q]}}\hat{A}_{[q]}\cong
        \G_a(0)(U_{[q]}, \tU_{[q]})\otimes_{A^{\int}_{[q]}}\widehat{A^{\int}_{[q]}},\]
        where the first isomorphism holds by definition and the second follows from the fact that 
        $\G_a(0)(U_{[q]}, \tU_{[q]})$ is an $A_{[q]}^{\int}$-module and 
        that $\widehat{A_{[q]}^\int}= A_{[q]}^\int\otimes_{A_{[q]}} \hat{A}_{[q]}$, as $A_{[q]}^\int$ is finite over $\tA$, 
        see Lemma \ref{lem:coh}. Together with the isomorphism $\widehat{A_{[q]}^{\rm int}}\cong \cO_{\fXrig}(\fU_{[q]}^{\rig})$,
        see  Lemma \ref{lem:Orig-flat-Ot}, the isomorphism \eqref{cor:Gar4}  yields an isomorphism
        \[\G_a(0)_{(X,\tX),\zar}^{\rm for}(\fU_{[q]}) \cong H^0(\cU^{\rig}_{[q]},\cO(1)\big).\]
        Hence, we conclude that
   \[    \Cone(\chi') \simeq \lim_{q\in \Delta}\tau_{\leq 1}R\Gamma(\cU^{\rig}_{[q]},\cO(1)).
        \]
     Since $U$ is smooth $\cU^{\rig}_{[q]}$ is a smooth affinoid, hence by Theorem \ref{thm.bartenwerfer}, we have $H^i(\cU^{\rig}_{[q]},\cO(1))\in \Mod_R^{0}$, for all $i>0$. 
 	Noting that the limits in \eqref{eq1;thm;finiteness} are computed as finite limits and that $\cD(R)^0\subset \cD(R)$ is closed under finite limits, we conclude.
    \end{proof}
\end{thm}

\subsection{An extension to singular schemes}\label{ss;sing}
\def\NSchKS{\mathrm{NSch}_K^\cP}
\def\NSchKSS{\mathrm{NSch}_K^{\cP,\leq 3}}
\def\cFS{\cF^{\cP}}

The content of this subsection is a result of a private correspondence with B. Bhatt.
In the above proof of Theorem \ref{thm;main}, the smoothness is used only in 
Theorem \ref{thm:fg-uncond}, which depends on Bartenwerfer's theorem \cite[Folgerung~3]{B2} (see Theorem \ref{thm.bartenwerfer}).
Having a generalization to affinoids with certain prescribed singularities, we can relax the smoothness condition to such singularities,  see Theorem \ref{thm;main-sing} below.

Let $\cP$ be a class of singularities for normal schemes separated of finite type over $K$ (e.g. rational, log canonical, Du Bois) and $\NSchKS$ be the category of normal schemes $X$ separated of finite type over $K$ such that singularities of $X$ are at most $\cP$ and that $X_{\mathrm{sing}}$ is proper over $K$.
Consider the functor (cf. \eqref{eq;cFr}):
\[ \NSchKS\ni  X\mapsto \cFS(X) := R\Gamma_t(X/R,\G_a(0)).\]
Consider the following condition:
\begin{enumerate}
\item[$(\dagger)$]
For any proper normal scheme $X$ over $K$ whose singularities are at most $\cP$ and for any affinoid subdomain 
$V\subset X^{\rig}$ and for any $i\in \Z_{>0}$, there exist $c\in K^\times$ with $|c|<1$ such that $c\cdot H^i(V,\cO(1))=0$.
\end{enumerate}
If $(\dagger)$ holds, the conclusions of Theorem \ref{thm:fg-uncond} hold true assuming that the singularities of $X=\tX\otimes_{\OK} K$ is at most $\cP$.
Hence, by Remark \ref{rmk;thm;comparison-final}, we get the  following.

\begin{thm}\label{thm;main-sing}
Assume that $(\dagger)$ holds and $K$ is either a discrete valuation field or algebraically closed.  Then, the following hold:
\begin{enumerate}
\item
We have $\cFS(X)\in \sD^{\geq0}(R)^f$, for every $X\in \NSchKS$.
\item
For a morphism $f:Y\to X$ in $\NSchKS$ which is an isomorphism over a dense open $U\subset X$ such that $X_{\mathrm{sing}}\subset U$ and $Y_{\mathrm{sing}}\subset f^{-1}(U)$, we have $\cF(X)\simeq \cF(Y)$.
\item
Let $X\in \NSchKS$ and assume $\ch(K)=0$ or $\dim X\le 3$.
Then, we have an equivalence
\[\cF(X)\otimes_R K \simeq R\Gamma(\Xb,\cO),\]
for a normal compactification $X\hookrightarrow \Xb$ over $K$ such that  $\Xb$ is regular in a neighborhood of $\Xb-X$.
\end{enumerate}
 \end{thm}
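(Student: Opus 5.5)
The plan is to rerun the proof of Theorem \ref{thm;main} for smooth schemes with two changes: $\Sm_K$ is replaced by $\NSchKS$, and Bartenwerfer's Theorem \ref{thm.bartenwerfer} is replaced by $(\dagger)$. First I check that every earlier result other than Theorem \ref{thm:fg-uncond} needs only normality of $X$, plus regularity near the boundary for the comparison statements. Lemma \ref{lem:Gar3}, Corollary \ref{cor:Gar}, Theorem \ref{thm;comparison-final} and Remark \ref{rmk;thm;comparison-final} are all stated for normal $X$. Assertions (2) and (3) then follow at once. For (2), I use the commutative square of Remark \ref{rmk;thm;comparison-final}(2) together with \eqref{eq;thm;comparison-final}. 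The latter is available because $X_{\mathrm{sing}}\subset U$ means $X$ is regular in a neighborhood of $X\setminus U$, and likewise for $Y$. For (3), I take a normal compactification $\Xb$ that is regular near $\Xb\setminus X$, as in Remark \ref{rmk;thm;comparison-final}(1). Then Theorem \ref{thm;comparison-final}\ref{thm;comparison-final2} gives $\cF(X)\simeq R\Gamma(\Xb^{\rig},\cO(1))$. After inverting $\pi$, $\cO(1)[1/\pi]=\cO$, and the cohomology commutes with this filtered colimit because $\Xb^{\rig}$ is quasi-compact and quasi-separated. Rigid GAGA then identifies the result with $R\Gamma(\Xb,\cO)$.

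For (1), I pick a compactification $\Xb$ of $X$ that is normal, regular near the boundary, and still has at most $\cP$-singularities; this is possible because all singular points lie in $X$. I choose a proper flat model $\tX$ over $R$. By Theorem \ref{thm;comparison-final}\ref{thm;comparison-final2}, $\cF(X)\simeq R\Gamma((\Xb,\tX)_t,\G_a(0))$. It therefore suffices to prove the analogue of Theorem \ref{thm:fg-uncond}\ref{thm:fg-uncond1} for this $\Xb$, and I follow that proof line by line.

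Let $\chi$ be the comparison map from Zariski cohomology on $\tX$. Its source lies in $\Mod_R^f$ degreewise, by Corollary \ref{cor:Gar}\ref{cor:Gar2},\ref{cor:Gar3} and the finiteness theorems for proper $R$-schemes; this step uses only normality. The cone of $\chi$ is computed Čech-locally on a finite affine cover $\fU_{[q]}^{\rig}$ as $\lim_{q\in\Delta}\tau_{\le1}R\Gamma(\fU^{\rig}_{[q]},\cO(1))$, using \eqref{cor:Gar4} for the identification in degree $0$. This is exactly where smoothness entered before: one needs $H^i(\fU^{\rig}_{[q]},\cO(1))\in\Mod_R^0$ for $i>0$. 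Each $\fU^{\rig}_{[q]}$ is an affinoid subdomain of $\Xb^{\rig}$, and $\Xb$ is proper normal with at most $\cP$-singularities. So $(\dagger)$ supplies a $c$ with $|c|<1$ annihilating these groups, and $c$ divides a power of $\pi$, so the groups lie in $\Mod_R^0$. Since the limit over $\Delta$ is finite, the cone lies in $\cD(R)^0$, and Lemma \ref{lem:modf} concludes.

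The main point to check is the choice of compactification in (1). It must be regular along the boundary so that Theorem \ref{thm;comparison-final}\ref{thm;comparison-final2} applies, and it must keep singularities in class $\cP$ so that $(\dagger)$ applies. Away from $X$, resolution of singularities gives regularity, and regular points presumably count as $\cP$ for any reasonable class. The case $\dim X\le 3$ in positive characteristic again relies on \cite{CossartPiltant-ResArith3folds}.
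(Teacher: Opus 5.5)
Your proposal follows the paper's own route. The paper only observes that $(\dagger)$ can replace Theorem \ref{thm.bartenwerfer} in the proof of Theorem \ref{thm:fg-uncond}, applied to a normal compactification with at most $\cP$-singularities that is regular along the boundary, and then invokes Remark \ref{rmk;thm;comparison-final} for the remaining assertions, which is exactly what you spell out. Note only that, as in the paper's sketch, your arguments for (2), and for (1) with $X$ non-proper, go through Theorem \ref{thm;comparison-final}\ref{thm;comparison-final2} and resolution along the boundary, so they tacitly use the hypothesis $\ch(K)=0$ or $\dim X\le 3$; you should state this explicitly.
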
 
 
 By the same argument as in the introduction, we can deduce the following from the above theorem.

\begin{thm}\label{thm;application-sing}
Let $k$ be an arbitrary field and let $\Lambda \subseteq k$ be the integral closure of $\bZ$.
Let $X$ be a proper normal scheme over $K$ whose singularities are at most $\cP$. Assume that $(\dagger)$ holds and either $\ch(k)=0$ or $\dim(X)\leq 3$. For a $k$-rational map $\phi: X\dasharrow X$ whose indeterminate locus is disjoint with $X_{\mathrm{sing}}$ and $i\in \N$, the characteristic polynomials ${\rm det}\big(T-\phi^*|H^i(X,\cO_X)\big)$ and ${\rm det}\big(T-\phi^*|H^i(X,\omega_X)\big)$ lie in $\Lambda[T]$.
\end{thm}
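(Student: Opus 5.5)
The proof follows the argument for Theorem \ref{thm;application} sketched in the introduction, with Theorem \ref{thm;main-sing} in place of Theorem \ref{thm;main}.

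\textbf{Step 1 (reductions).} I first reduce to $k$ finitely generated over its prime field. The scheme $X$, the rational map $\phi$ and the closed subset $X_{\mathrm{sing}}$ descend to a finitely generated subfield $k_0\subset k$. Since $X$ is normal and proper, $H^i(X,\cO_X)$ and $H^i(X,\omega_X)$ commute with flat base change, and so does $\phi^*$. Hence the characteristic polynomial over $k$ is the image of the one over $k_0$. The polynomial lies in $\Lambda_{k_0}[T]$, and $\Lambda_{k_0}=\Lambda\cap k_0$ is the intersection, over all discrete valuations $v$ of $k_0$, of $k_0$ with the completed valuation rings $\hat\cO_v$. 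So it suffices to show integrality over $R=\hat\cO_v$ after base change to $K=\Frac(\hat\cO_v)$. This $K$ is a complete discretely valued field, and the hypothesis $\ch(k)=0$ or $\dim X\le3$ is preserved. Base change along the separable extension $k_0\subset K$ preserves normality and regularity of $X$, and the singularities stay of type $\cP$, which I take to be a stable class.

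\textbf{Step 2 (the action of $\phi$ and the lattice).} Let $U\subset X$ be the domain of definition of $\phi$. By hypothesis $X_{\mathrm{sing}}\subset U$. Let $\Gamma\subset X\times X$ be the closure of the graph of $\phi|_U$. I resolve $\Gamma$ only over the complement of the graph of $\phi|_U$: the locus $\Gamma\setminus\Gamma_{\phi|_U}$ lies over $X\setminus U$, which is regular, so Hironaka in characteristic $0$, or \cite{CossartPiltant-ResArith3folds} when $\dim\le 3$, produces $\tGam\to\Gamma$ which is an isomorphism over $U$ and whose target is regular away from $U$. Then $\tGam$ is normal with singularities of type $\cP$, and $\tGam_{\mathrm{sing}}$ lies in the preimage of $U$, isomorphic to $U$. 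The projection $\pr_1:\tGam\to X$ is an isomorphism over $U$ with $X_{\mathrm{sing}}\subset U$ and $\tGam_{\mathrm{sing}}\subset\pr_1^{-1}(U)$. By Theorem \ref{thm;main-sing}(2), $\pr_1^*:\cFS(X)\to\cFS(\tGam)$ is an equivalence. Composing its inverse with $\pr_2^*:\cFS(X)\to\cFS(\tGam)$ gives an endomorphism $\Phi$ of $\cFS(X)$.

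By Theorem \ref{thm;main-sing}(3), using $\Xb=X$ (which is already proper), $\cFS(X)\otimes_RK\simeq R\Gamma(X,\cO_X)$. The equivalence is compatible with pullbacks, so $\Phi\otimes K=\phi^*$. By Theorem \ref{thm;main-sing}(1), the image $M$ of $H^i(\cFS(X))$ in $H^i(X,\cO_X)$ is an $\phi^*$-stable $R$-lattice, by Remark \ref{rmk:lattice-modf}. Since $R$ is a DVR, $M$ is free of finite rank. Hence $\det(T-\phi^*)$ has coefficients in $R$.

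\textbf{Step 3 ($\omega_X$).} For $\omega_X$ I use Serre duality on the normal proper $X$. I expect this to be the main obstacle: in general $X$ is only Cohen--Macaulay in the rational case. If $X$ is Cohen--Macaulay, Grothendieck duality identifies $H^i(X,\omega_X)$ with the dual of $H^{n-i}(X,\cO_X)$. Under this identification, the action of $\phi$ on $\omega$, defined by pushing forward along $\tGam$, becomes the transpose of $\phi^*$ on $H^{n-i}(X,\cO_X)$ (via $\pr_{2*}$ and the trace). Its characteristic polynomial therefore equals that of $\phi^*$ on $H^{n-i}(X,\cO_X)$, and integrality follows from the previous step. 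If $X$ is not Cohen--Macaulay, I would instead work with $\omega_X^\bullet$ and its hypercohomology, using the trace $R\pr_{1*}\omega_{\tGam}\to\omega_X$, which is an isomorphism since $\pr_1$ is an isomorphism over $U\supset X_{\mathrm{sing}}$ and a regular modification elsewhere.
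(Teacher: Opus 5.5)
Your Steps 1 and 2 follow the paper's argument. The paper's proof is only ``the same argument as in the introduction'': reduce to $k$ finitely generated, pass to completions at discrete valuations, and use Theorem~\ref{thm;main-sing}(1)--(3) to get a $\phi^*$-stable lattice in $H^i(X,\cO_X)$. Your $\tGam$, an isomorphism over the graph of $\phi|_U$ and regular elsewhere, is a legitimate way to put $\pr_1$ in the setting of Theorem~\ref{thm;main-sing}(2). The real reason it exists is that the image of the proper set $X_{\mathrm{sing}}$ is closed in $\Gamma$. So the singular locus of $\Gamma$ splits into two disjoint closed pieces, and you resolve only the one over $X\setminus U$. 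A minor point: for a non-excellent discrete valuation in characteristic $p$, the extension $\widehat{k_0}_v/k_0$ need not be separable. Restrict to divisorial valuations, which still cut out $\Lambda\cap k_0$.

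The genuine gap is in Step~3. Serre duality swaps pullback and pushforward: the transpose of pullback of forms along $g\colon Y\to X$ on $H^i(\omega)$ is the trace (pushforward) $g_*$ on $H^{n-i}(\cO)$. Hence the transpose of $\phi^*=(\pr_1^*)^{-1}\circ\pr_2^*$ on $H^i(X,\omega_X)$ is $\phi_*=\pr_{2*}\circ\pr_1^*$ on $H^{n-i}(X,\cO_X)$, not $\phi^*$. The operator $\pr_{2*}\circ\pr_{1*}^{-1}$ on $\omega$ that you describe is indeed dual to $\phi^*$ on $\cO$, but it is $\phi_*$, not the $\phi^*$ of the statement.

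Example: take $E\colon y^2=x^3-x$ and $\phi=[i]\colon(x,y)\mapsto(-x,iy)$. Then $\phi^*(dx/y)=i\,dx/y$. Since $\phi$ is an automorphism, duality forces $\phi^*$ to act on $H^1(E,\cO_E)$ by $-i$, so its transpose is $-i$, not $i$.

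So Step~2 does not apply. You would need the lattice $H^{n-i}(\cFS(X))$ to be stable under the trace along the generically finite map $\pr_2$. That is a covariant functoriality which Theorem~\ref{thm;main-sing} does not provide, and which the paper's one-word appeal to Serre duality does not spell out either.

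Partial repairs and remaining issues:
\begin{itemize}
\item If $\phi$ is birational and $\phi^{-1}$ is also defined near $X_{\mathrm{sing}}$, the argument can be repaired: then $\phi_*=(\phi^{-1})^*$, and Step~2 applies to $\phi^{-1}$.
\item For general $\phi$, an extra input is needed.
\item Your fallback for non-Cohen--Macaulay $X$ (which log canonical or Du Bois $\cP$ allows) computes hypercohomology of $\omega_X^\bullet$. That group differs from $H^i(X,\omega_X)$, so this branch does not address the statement.
\end{itemize}
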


\bibliographystyle{alpha}
\bibliography{bib-2}

\end{document}